\newtheorem{theorem}{Theorem}[section]
\newtheorem{lemma}[theorem]{Lemma}
\newtheorem{corollary}[theorem]{Corollary}
\newtheorem{proposition}[theorem]{Proposition}
\newtheorem{claim}{Claim}
\newtheorem*{claim*}{Claim}
\theoremstyle{definition}
\newtheorem{definition}[theorem]{Definition}
\newtheorem*{remark*}{Remark}
\newtheorem{remark}[theorem]{Remark}
\numberwithin{equation}{section}
\newcommand{\eqdef}{\stackrel{\scriptscriptstyle\rm def}{=}}
\def\c{{\rm c}}
\def\u{{\rm u}}
\DeclareMathOperator{\dist}{Dist}
\DeclareMathOperator{\Per}{Per}
\DeclareMathOperator{\card}{card}
\def\bN{\mathbb{N}}
\def\bZ{\mathbb{Z}}
\def\bS{\mathbb{S}}
\def\bE{\mathbb{E}}
\def\bC{\mathbb{C}}
\def\bR{\mathbb{R}}
\def\cO{\EuScript{O}}
\def\cN{\EuScript{N}}
\def\cM{\EuScript{M}}
\def\NN {{\mathbb N}}
\DeclareMathSymbol{\varnothing}{\mathord}{AMSb}{"3F}
\renewcommand{\emptyset}{\varnothing}
\title[Structure of the space of ergodic measures]{The structure of the space of ergodic measures of transitive partially hyperbolic sets}
\author[L.~J.~D\'iaz]{L. J. D\'\i az}
\address{Departamento de Matem\'atica PUC-Rio, Marqu\^es de S\~ao Vicente 225, G\'avea, Rio de Janeiro 225453-900, Brazil}
\email{lodiaz@mat.puc-rio.br}
\author[K.~Gelfert]{K.~Gelfert}
\address{Instituto de Matem\'atica Universidade Federal do Rio de Janeiro, Av. Athos da Silveira Ramos 149, Cidade Universit\'aria - Ilha do Fund\~ao, Rio de Janeiro 21945-909,  Brazil}
\email{gelfert@im.ufrj.br}
\author[T.~Marcarini]{T.~Marcarini}
\address{Departamento de Matem\'atica Universidade Federal do Esp\'irito Santo, Av. Fernando Ferrari 514, Campus de Goiabeiras,  Vit\'oria 29075-910,  Brazil}
\email{tiane.pinto@ufes.br}
\author[M.~Rams]{M. Rams} \address{Institute of Mathematics, Polish Academy of Sciences, ul. \'{S}niadeckich 8,  00-656 Warszawa, Poland}
\email{rams@impan.pl}
\thanks{This research has been supported  [in part]  by CNE-Faperj, CNPq-grants (Brazil), and National Science Centre grant 2014/13/B/ST1/01033 (Poland). The authors acknowledge the hospitality of IMPAN, IM-UFRJ, and PUC-Rio.}
\begin{document}

\begin{abstract}
	We provide examples of  transitive partially hyperbolic dynamics (specific but paradigmatic examples of homoclinic classes) which blend  different types of hyperbolicity in the one-dimensional center direction. 
	These homoclinic classes have two disjoint parts: an ``exposed'' piece which is poorly homoclinically related with the rest and a ``core'' with rich homoclinic relations.
	There is an associated natural division of the space of ergodic measures which are either supported on the exposed piece or on the core.  We describe the topology of these two parts and show that they glue along nonhyperbolic measures. 
	
	Measures of maximal entropy are discussed in more detail. We present examples where the measure of maximal entropy is nonhyperbolic. We also present examples where the  measure of maximal entropy is unique and nonhyperbolic, however in this case the dynamics is nontransitive. 
\end{abstract}

\keywords{ergodic measure, heterodimensional cycle, homoclinic class and relation,
Lyapunov exponents,
partially hyperbolic dynamics, skew product, transitive}
\subjclass[2000]{%
37D25, 
28D20, 
28D99, 
37D30, 
37C29
}

\maketitle

\section{Introduction}

An important task in ergodic theory is to describe the topology of the space of invariant and/or ergodic measures which are supported on a given invariant set. Here  in many cases the weak$\ast$ topology is considered, though one also studies convergence in the weak$\ast$ topology and entropy.
Recently there happened a certain revival of this type of problems in the context of nonhyperbolic dynamical systems \cite{GelKwi:,GorPes:17,DiaGelRam:17,BocBonGel:}, most of them revisiting the pioneering work of Sigmund on topological dynamical systems satisfying  the specification property \cite{Sig:74,Sig:77}.

For a general continuous map $F$ on a metric space $\Lambda$, consider the set of $F$-invariant Borel probability measures $\cM(\Lambda)$ and denote by $\cM_{\rm erg}(\Lambda)$ the subset of ergodic ones. If $\Lambda$ is compact then $\cM=\cM(\Lambda)$ is a Choquet simplex whose extremal elements are the ergodic measures.  Density of ergodic measures in $\cM$ implies that  either $\cM$  is a singleton (when $F$ is uniquely ergodic) or a nontrivial simplex whose extreme points are dense. In the latter case, it is \emph{the} so-called \emph{Poulsen simplex} and by~\cite{LinOlsSte:78} has immediately a number of further strong properties such as arcwise connectedness.
Sigmund~\cite{Sig:74,Sig:77} addressed first the questions on the density of ergodic measures and also the properties of generic invariant measures. He showed that for a map $F$ satisfying the so-called \emph{periodic specification property} the periodic measures (and thus the ergodic ones) are dense in $\cM$. Here a measure is \emph{periodic} if it is the invariant probability measure supported on a periodic orbit. 
Moreover, the sets of  ergodic measures and of measures with entropy zero are both residual in $\cM$. For an updated discussion and more references, see \cite{GelKwi:}. 

Observe that Sigmund's results~\cite{Sig:74,Sig:77}  immediately apply to any basic set of a smooth Axiom A diffeomorphism. In a (more) general context, to address the general question if the space $\cM$ has dense extreme points or at least is connected, some natural requirements are to be satisfied. An important one  is certainly topological transitivity,  which is however far from being sufficient as for example there exist minimal systems with exactly two ergodic measures. 

Nowadays arguments which provide the connectedness of $\cM$ are largely based on the approximation of invariant measures by periodic measures or Markov ergodic measures supported on horseshoes (a specific type of basic set). This demands that the periodic orbits involved are hyperbolic and somehow dynamically related among themselves. A natural relation introduced by Newhouse \cite{New:80}, and used in this context, is the \emph{homoclinic relation}, that is, the un-/stable invariant sets of these orbits intersect cyclically and transversally. 

A natural strategy is to study the components of the space of measures which each are candidate to correspond to one of the ``elementary'' undecomposable pieces of the dynamics. One of the possibilities to define properly what is meant by elementary is the \emph{homoclinic class}, that is,   the closure of the hyperbolic periodic orbits which are homoclinically related to the orbit of a hyperbolic periodic point $P$ and denoted by $H(P)$. Note that one of the fundamental properties is that the dynamics on each class is topologically transitive.
Basic sets of the hyperbolic theory mentioned above are the simplest examples of homoclinic classes.  
 
Notice that, when defining a homoclinic class, taking the closure can incorporate other orbits which are dynamically related but which are of different type of hyperbolicity. 
In this way, homoclinic classes may fail to be hyperbolic, contain saddles of different types of hyperbolicity (different $\u$-index, that is, dimension of unstable manifold), exhibit internal cycles, and support nonhyperbolic measures (also with positive entropy). 
Homoclinic classes of periodic points of different indices may even coincide.
Furthermore, there are examples where a homoclinic class $H(P)$ of a periodic point $P$ properly contains another class $H(P')$ of a periodic point $P'$ of the same index as $P$. Note that this precisely occurs if $P'\in H(P)$ was not homoclinically related to $P$. One sometimes refers to $H(P')$ as an \emph{exposed piece} of $H(P)$ \cite{DiaGelRam:14}. This type of phenomenon is a key ingredient in this paper. This gives only a rough idea what complicated structure these classes may have, see also \cite[Chapter 10.4]{BonDiaVia:05} for a more complete discussion. 

To be more precise for the following, we say that an ergodic measure $\mu$ is \emph{hyperbolic} if its Lyapunov exponents are nonzero. Moreover, almost all points have the same number $u=u(\mu)$ of positive Lyapunov exponents and we call this number $u$ the {\emph{$\u$-index of $\mu$}} (analogously to hyperbolic periodic measures above).  Given $u$, we denote by denote by $\cM_{{\rm erg},u}$ the set of ergodic measures of $\u$-index $u$. Note that in general one may have $\cM_{{\rm erg},u}(H(P))\ne\emptyset$ for several values of $u$. 

For the following  let us study the topological structure of $\cM_{{\rm erg},u}(H(P))$ for $u$ being the index of $P$.  Assuming that $H(P)$  is locally maximal and that all the saddles of index $u$ are homoclinically related, in \cite{GorPes:17} it is shown  that $\cM_{{\rm erg},u}(H(P))$ is path connected with periodic measures being dense and that its closure is a Poulsen simplex. Note that $\cM_{{\rm erg},u}(H(P))$ may only capture some part of $\cM_{{\rm erg}}(H(P))$.
Indeed this occurs when $H(P)$ contains saddles of different indices.
Still in this context, assume now that there coexists a saddle $Q$ of index $v\ne u$  and having the property that $H(Q)\subset H(P)$ (in an extreme case, these classes can even coincide as sets) and assume that all the saddles of index $v$ in $H(Q)$ are homoclinically related with $Q$ and consider $\cM_{{\rm erg},v}(H(Q))$. Though the interrelation between $\cM_{{\rm erg},u}(H(P))$ and $\cM_{{\rm erg},v}(H(Q))$ is not addressed in \cite{GorPes:17}, note that, by the very definition, they  are disjoint. Nevertheless, their closures may intersect or may not. Indeed, the space $\cM_{\rm erg}(H(P))$ may be connected or may not. To address this point is precisely the goal of this paper.

We introduce a class of examples of saddles $P$ and $Q$ of different indices whose homoclinic classes coincide $H(P)=H(Q)=\Lambda$
such that $\Lambda$ is the disjoint union of two invariant sets $\Lambda_{\mathrm{ex}}$ (a compact set that is a topological horseshoe) and 
$\Lambda_{\mathrm{core}}$. Moreover, these sets satisfy the following properties:
(i) $P, Q\in \Lambda_{\mathrm{core}}$ and the closure of $\Lambda_{\mathrm{core}}$ is the whole homoclinic class,
(ii) every pair of saddles of the same index in $\Lambda_{\mathrm{core}}$ (respectively, $\Lambda_{\mathrm{ex}}$) are homoclinically related, and
(iii) no saddle  in $\Lambda_{\rm core}$ is homoclinically related to any one in $\Lambda_{\rm ex}$.
We refer to $\Lambda_{\mathrm{ex}}$ as the \emph{exposed piece} of $\Lambda=H(P)=H(Q)$ and to $\Lambda_{\rm core}$ as its \emph{core}. We study the space $\cM_{{\rm erg}}(\Lambda)$ and show that it has an interesting topological structure: the set $\cM_{{\rm erg}}(\Lambda)$  has three pairwise disjoint parts 
$\cM_{{\rm erg},u}(\Lambda)$, $\cM_{{\rm erg},v}(\Lambda)$, $v=u+1$ and $u,v$ are
the indices of $P$ and $Q$,  and $\cM_{{\rm erg}}(\Lambda_{\mathrm{ex}})$, 
such that
$$
\cM_{{\rm erg}}(\Lambda)= \cM_{{\rm erg},u}(\Lambda)
\cup \cM_{{\rm erg},u}(\Lambda)  \cup 
\cM_{{\rm erg}}(\Lambda_{\mathrm{ex}})
\cup \cM_{{\rm erg, nhyp}}(\Lambda)
$$
where $\cM_{{\rm erg, nhyp}}(\Lambda)$ is the set of of nonhyperbolic ergodic measures of $\Lambda$. 
Note that $\cM_{{\rm erg}}(\Lambda_{\mathrm{ex}})$ and $\cM_{{\rm erg, nhyp}}(\Lambda)$ may intersect.
Moreover, the sets 
$\mathrm{closure}(\cM_{{\rm erg},u}(\Lambda))$, 
$\mathrm{closure}(\cM_{{\rm erg},v}(\Lambda))$,
and $\mathrm{closure}(\cM_{{\rm erg}}(\Lambda_{\mathrm{ex}}))$,  
are Poulsen simplices  whose intersection is contained in
$\cM_{{\rm erg, nhyp}}(\Lambda)$, see
Theorem~\ref{thepro:measurespace}.
Figure~\ref{Fig:2} below illustrates the interrelation between the measure space components.

\begin{figure}[h] 
\begin{overpic}[scale=.6,bb=0 0 200 200]{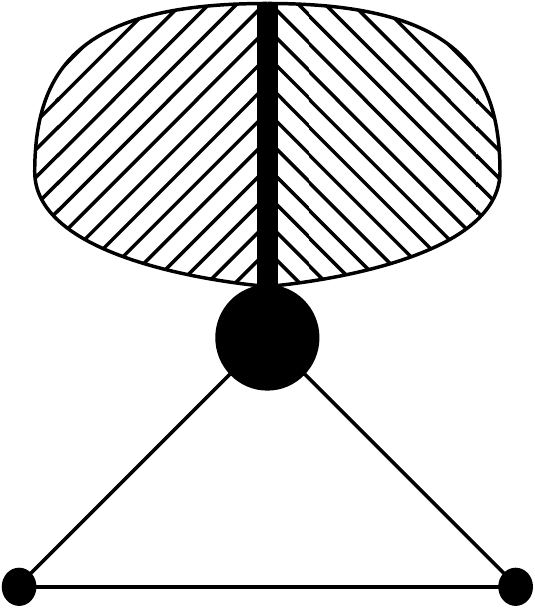}
 \put(80,85){{\small{$\cM_{{\rm erg},u}(\Lambda)$}}}
  \put(-38,85){{\small{$\cM_{{\rm erg},u+1}(\Lambda)$}}}
   \put(55,42){{\small{$\cM_{{\rm erg,nhyp}}(\Lambda)$}}}
    \put(25,15){{\small{$\cM_{{\rm erg}}(\Lambda_{\mathrm{ex}})$}}}
     \put(-13,-1){\small{$\delta_{P_{\mathrm{ex}}}$ }}
        \put(89,-1){\small{$\delta_{Q_{\mathrm{ex}}}$ }}
\end{overpic}
\caption{The space $\cM_{{\rm erg}}(\Lambda)$}
\label{Fig:2}
\end{figure}

Let us say a few additional words about the topological structure of the set $\Lambda=H(P)=H(Q)$. There are two exposed saddles
$P_{\mathrm{ex}}, Q_{\mathrm{ex}} \in \Lambda_{\mathrm{ex}}$ of the same indices such as $P$ and $Q$, respectively, which are involved in 
a {\emph{heterodimensional cycle}} (i.e., the invariant sets of these saddles meet cyclically),
 Indeed, the intersections of these invariant  sets give rise to the exposed
piece of dynamics that satisfy $\Lambda_{\mathrm{ex}}=H(P_{\mathrm{ex}})=H(Q_{\mathrm{ex}})\subsetneq\Lambda$. We are aware that on one hand this is a quite specific dynamical configuration, on the  other hand it provides paradigmatic examples. We also observe that this dynamical configuration resembles in some aspects the so-called Bowen eye (a two dimensional vector field having two saddle singularities involved in a double saddle connection) in \cite{Gau:92,Tak:94} and the examples due to Kan of intermingled basins of attractions (where an important property is that the boundary of an annulus is preserved) \cite{Kan:84}.
Finally, if we considered systems satisfying some boundary conditions or preserving a boundary, the conditions considered are quite general.

A particular emphasize is given to the measures of maximal entropy. In some cases, In some cases, these measures can be nonhyperbolic. We give a (non-transitive) example where the unique measure of maximal entropy is nonhyperbolic. 

Finally, we state of results for step skew products (these examples have differentiable realizations as partially hyperbolic sets with one dimensional central direction) and throughout the paper we do not aim  generality, on the contrary our goal is to make the construction in the simplest setting emphasizing the key ingredients behind the constructions.

This paper is organized as follows. In Section~\ref{s.settingandstatement} we state precisely our setting and our examples and state our main results.
In Section~\ref{sec:twinsetc}
 we study the ``symmetries" between certain measures and investigate entropy.  In Section \ref{sec:measures} we study the approximation of
 ``boundary measures". In Section~\ref{s.core} we study the measures supported in $\Lambda_{\mathrm{core}}$. In Appendix \ref{a.apendix} we provide details on
 transitivity and homoclinic relations in our examples and we analyze  examples with nonhyperbolic measures of maximal entropy. 

\section{Setting and statement of results} 
\label{s.settingandstatement}

We  now define precisely the dynamics that we will study.
Consider $C^1$ diffeomorphisms $f_0,f_1\colon[0,1]\to[0,1]$  satisfying the following properties:
\begin{enumerate}
\item [(H1)] 
The map $f_0$ has (exactly) two fixed points $f_0(0)=0$ and $f_0(1)=1$,
satisfies  $f_0'(0)=\beta>1$ and  $f_0'(1)=\lambda\in (0,1)$.
\\[-0.4cm]
\item [(H2)] 
The map $f_1$ has negative derivative and satisfies $f_1(0)=1$ and $f_1(1)=0$.
\end{enumerate}
The simplest (and also paradigmatic) example occurs when $f_1(x)=1-x$.

\begin{figure}[h] 
\begin{overpic}[scale=.35,bb=0 0 330 330]{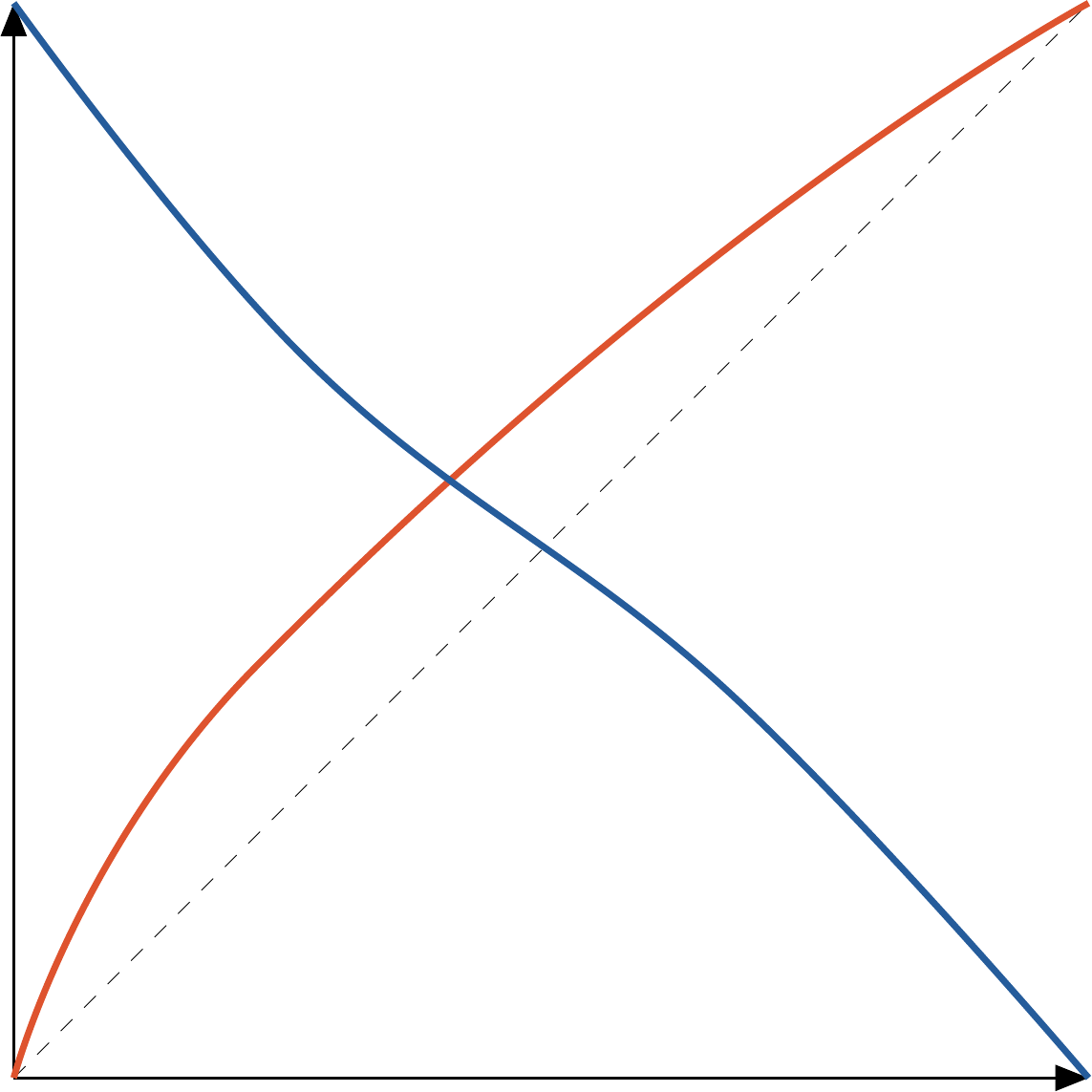}
 \put(60,85){$f_0$}
 \put(80,30){$f_1$}
\end{overpic}
\caption{Fiber maps in~\eqref{eq:defF}}
\label{Fig:1}
\end{figure}

Let $\sigma \colon \Sigma_2 \to \Sigma_2$ be the standard shift map on the shift space $\Sigma_2=\{0,1\}^{\bZ}$ of two-sided sequences, endowed with the usual metric. Consider the one step-skew product map $F$ associated to $\sigma$ and the maps $f_0$ and $f_1$ defined by
\begin{equation}\label{eq:defF}
	F\colon \Sigma_2\times [0,1]\to\Sigma_2 \times [0,1],\quad
	( \xi, x) \mapsto \big( \sigma(\xi) , f_{\xi_0}(x) \big).
\end{equation}
We consider the following $F$-invariant  subsets of $\Lambda\eqdef\Sigma_2\times [0,1]$
\begin{equation}\label{eq:defLambda}
	\Lambda_{\rm{ex}}\eqdef \Sigma_2\times \{0,1\},
	\quad
	\Lambda_{\rm{core}} 
	\eqdef  ( \Sigma_2\times [0,1])\setminus \Lambda_{\rm{exc}}
	=\Sigma_2\times(0,1).
\end{equation}
We say that $\Lambda_{\rm{ex}}$ is the \emph{exposed piece} of
$\Sigma_2\times [0,1]$ and that $\Lambda_{\rm{core}}$ is the \emph{core} of 
$\Sigma_2\times [0,1]$ (these denominations are justified below). Note that 
$\Lambda_{\rm{ex}}$ is a closed while $\Lambda_{\rm{core}}$ is not. Moreover, $F|_{\Lambda_{\rm ex}}$ is topologically transitive.
In fact, $F|_{\Lambda_{\rm ex}}$ is conjugate to a subshift of finite type, one may think this dynamical system as a horseshoe in a ``plane'', in that plane any pair of   saddles are ``homoclinically related''.

\begin{remark}[{Topological dynamics on $\Sigma_2\times[0,1]$}]
	While the dynamics in $\Lambda_{\rm ex}$ is completely characterized, in our quite general setting very few can be said about the dynamics of $F$ in $\Lambda_{\rm core}$. The most interesting case certainly occurs when $F|_{\Lambda_{\rm core}}$ is topologically transitive. Below we will see more specific examples where this transitivity indeed holds and, moreover, hyperbolic periodic orbits of positive and negative Lyapunov exponent are both dense in $\Sigma_2\times[0,1]$ and homoclinically related. We will see that nevertheless the measure space $\cM(\Lambda_{\rm ex})$ is ``semi-detached'' from $\cM(\Lambda_{\rm core})$. \end{remark}

Consider now more specific hypotheses on the $C^1$ diffeomorphisms interval maps $f_0,f_1\colon[0,1]\to[0,1]$:
\begin{enumerate}
\item [(H2')] $f_1(x)=1-x$.
\item [(H3)] The derivative $f_0^\prime$ is decreasing. Considering the point $c\in (0,1)$ defined by the condition $f_0^\prime (c)=1$, it holds
$
	f_1 \circ f_0^2(c)> f_0^2(c).
$
\item [(H4)] The numbers $\lambda$ and $\beta$ given in (H1) satisfy
\[
\varkappa\eqdef
\frac{\lambda^2\, (1-\lambda)}{\beta\, (\beta-1)}>1.
\]
\end{enumerate}
Observe that for fixed $\lambda$, the inequality in (H4) holds whenever $\beta$ is close enough to $1$.

\begin{proposition}\label{pro:22}
	Assume that $F$ defined in~\eqref{eq:defF} satisfies the hypotheses (H1), (H2'), (H3), and (H4). Then $F$ is topologically transitive. Moreover, every pair of fiber expanding hyperbolic periodic orbits and every pair of fiber contracting hyperbolic periodic orbits in $\Lambda_{\rm core}$ are homoclinically related, respectively.  
\end{proposition}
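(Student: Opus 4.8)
\emph{Proof strategy.} The plan is to reduce both assertions to a single covering property of the fiber iterated function system generated by $f_0$ and $f_1$, and then to convert intersections of invariant manifolds in $\Lambda$ into reachability statements for that system. For a finite word $w=(w_0,\dots,w_{k-1})\in\{0,1\}^k$ set $f_w\eqdef f_{w_{k-1}}\circ\cdots\circ f_{w_0}$. First I would record the elementary fiber picture coming from (H1), (H2'), (H3): since $f_0$ has no fixed point in $(0,1)$ and $f_0>\operatorname{id}$ near $0$, one has $f_0(x)>x$ on $(0,1)$, so the forward $f_0$-orbit of every interior point increases monotonically to the attracting fixed point $1$; the map $f_1(x)=1-x$ reflects $[0,1]$ and exchanges its ends; and near $0$ the map $f_0$ expands by a factor close to $\beta>1$ while near $1$ it contracts by a factor close to $\lambda<1$, the change of regime occurring at the point $c$ of (H3).

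The crux is the following \emph{covering lemma}: for every interval $J\subset(0,1)$ and every point $z\in(0,1)$ there is a word $w$ with $z\in\operatorname{int}f_w(J)$. I would prove it by growing $J$. Reflecting by $f_1$ if necessary and then applying a long block $f_0^m$ pushes (part of) $J$ into the expanding region $[0,c]$ near $0$, where the length of an interval is multiplied at each step by a factor bounded below by a constant $>1$; so finitely many further applications of $f_0$ enlarge it to an interval of definite length straddling $c$. The quantitative input is (H4): the number $\varkappa=\lambda^2(1-\lambda)/(\beta(\beta-1))>1$ can be read as a lower bound for the net length gain of one full excursion (reflect to the expanding end, climb by $f_0$, fall back toward $1$), and $\varkappa>1$ guarantees that this gain beats the contraction near $1$, so that after finitely many excursions the image has length bounded below independently of $J$. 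Once an image has definite length and controlled position, (H3), equivalently $f_0^2(c)<\tfrac12$, pins down the geometry so that its images under the two generators, together with their iterates, sweep across all of $(0,1)$ and eventually contain the prescribed $z$ in their interior. Applying the same argument to $F^{-1}$, whose fiber maps are $f_0^{-1}$ (repelling at $1$) and $f_1^{-1}=f_1$ and which has the identical qualitative structure after conjugation by the involution $f_1$, yields the dual statement: for every point $z$ and interval $I$ there is a word $w$ with $f_w(z)\in\operatorname{int}I$.

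Topological transitivity is then immediate. Given nonempty open sets containing $[a]\times J$ and $[b]\times I$ (a cylinder times an interval), the covering lemma provides a word driving $f_a(J)$ into $I$, and the freedom of the full shift $\Sigma_2$ lets me prescribe the intervening and trailing symbols so that the resulting orbit starts in the first set and, after $n$ steps, lands in the second; hence $F^n\big([a]\times J\big)\cap\big([b]\times I\big)\neq\emptyset$. For the homoclinic relations I would use that homoclinic relatedness is an equivalence relation on hyperbolic periodic orbits of a fixed $\u$-index, so that it suffices to relate every orbit to one fixed reference orbit of its type in $\Lambda_{\rm core}$. A fiber-expanding periodic point $P=(\xi^\ast,p)$ has a nondegenerate local fiber-unstable interval $W^{\mathrm u}_{\mathrm{loc}}(p)$ and trivial fiber-stable set, so $W^{\mathrm u}(P)$ carries the center direction while $W^{\mathrm s}(P)$ lies along the hyperbolic base directions of $\sigma$. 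Thus $W^{\mathrm u}(P)$ meets $W^{\mathrm s}(\mathcal O)$ exactly when some forward word maps $W^{\mathrm u}_{\mathrm{loc}}(p)$ across the fiber coordinate of $\mathcal O$, i.e. when that coordinate lies in $\operatorname{int}f_w\big(W^{\mathrm u}_{\mathrm{loc}}(p)\big)$; this is a direct instance of the covering lemma, and the intersection is transverse because $f_w$ is a diffeomorphism (so the image interval crosses with nonzero length) and the stable and unstable directions of the full shift are automatically complementary. The reverse intersection $W^{\mathrm u}(\mathcal O)\cap W^{\mathrm s}(P)$ is obtained the same way. For fiber-contracting orbits the roles of $W^{\mathrm s}$ and $W^{\mathrm u}$ are exchanged: one now needs the forward orbit of a fiber point to enter a fiber-stable interval, which is precisely the dual statement proved for $F^{-1}$, and the transversality bookkeeping is identical.

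The step I expect to be the main obstacle is the covering lemma, and within it the quantitative growth of interval images \emph{uniformly down to the boundary}. Since $\Lambda_{\rm core}=\Sigma_2\times(0,1)$ is noncompact and the fiber dynamics degenerates at $0$ and $1$, one must estimate the length gained during an excursion as a function of the interval's distance to the boundary and verify that $\varkappa>1$ forces a net gain with a bound independent of that distance, while checking that (H3) prevents a central gap in $(0,1)$ from surviving under all iterates. Granting the lemma, transitivity and the transversality and equivalence-relation packaging of the homoclinic relations are routine.
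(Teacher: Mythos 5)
Your overall architecture (reduce everything to covering/minimality statements for the fiber IFS, then translate intersections of invariant sets into reachability) is the paper's, and your expanding covering lemma is essentially their ``expanding itineraries'': fundamental domains $I_0(\varepsilon)$, $I_1(\varepsilon)$ of $f_0$, excursions with word $0^{N}10^{M}$ whose derivative is bounded below by $\varkappa>1$ thanks to (H4), iterated until an interval covers a fundamental domain and then sweeps $(0,1)$. Your shortcut for transitivity (take $z$ in the target interval) is also fine. The genuine gap is in how you obtain the \emph{dual} statement. You claim it follows by ``applying the same argument to $F^{-1}$'', because the inverse IFS ``has the identical qualitative structure after conjugation by the involution $f_1$''. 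This is false: (H3) and (H4) are deliberately \emph{asymmetric} hypotheses (the paper remarks on this explicitly) and they are not inherited by the inverse system. The conjugated inverse map $h_0=f_1\circ f_0^{-1}\circ f_1$ does satisfy (H1), but with multipliers $\beta'=1/\lambda$ at $0$ and $\lambda'=1/\beta$ at $1$, so the (H4)-analogue for $h_0$ reads
\[
\frac{(\lambda')^2(1-\lambda')}{\beta'(\beta'-1)}
=\frac{\lambda^2(\beta-1)}{\beta^3(1-\lambda)}>1,
\]
which does not follow from (H4): for $\lambda=0.9$, $\beta=1.05$ one gets $\varkappa\approx 1.5$ while this quantity is $\approx 0.35$. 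Worse, the critical point of $h_0$ is $c'=1-f_0(c)$ and $h_0^2(c')=1-f_0^{-1}(c)$, so the (H3)-analogue $f_1(h_0^2(c'))>h_0^2(c')$ is equivalent to $f_0^{-1}(c)>1/2$; but (H3) itself forces $f_0^{-1}(c)<c<f_0^2(c)<1/2$. Hence under the standing hypotheses the inverse IFS \emph{never} satisfies the (H3)-analogue, and ``the same argument'' cannot be run for $F^{-1}$.

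This gap is essential, not cosmetic. For a pair of fiber-contracting periodic orbits, the unstable set of $\mathcal O(P)$ meets each fiber in only countably many points (the forward IFS images of the single point $p$, since the fiber-unstable set of a fiber-contracting point is trivial), so $W^u(\mathcal O(P))\cap W^s(\mathcal O(R))\neq\emptyset$ forces the forward orbit of the \emph{point} $p$ to enter a prescribed stable interval around $r$. Density of point orbits is exactly the dual statement and cannot be extracted from your interval-covering lemma. The paper closes this hole with a separate ``contracting itineraries'' argument in which (H3) is used in an essentially different way: working with $g_0=f_0^{-1}$, $g_1=f_1$, on the fundamental domain $[f_0(c),f_0^2(c)]$ the map $g_0$ has derivative at least $\upsilon=1/f_0'(f_0(c))>1$ because one sits to the right of the critical point $c$, where $f_0'<1$; and the inequality $f_1(f_0^2(c))>f_0^2(c)$ of (H3) guarantees that the excursion $g_1\circ g_0$ exits above $f_0^2(c)$, so that subsequent $g_0$-iterates re-enter the fundamental domain having passed only through the region of $\upsilon$-expansion. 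In other words, for the inverse system the expansion comes from the middle of the interval (near $c$), not from the endpoint multipliers; that is precisely where (H3) enters and precisely what your symmetry reduction misses. To repair your proof you must replace the appeal to $F^{-1}$ by this (H3)-based argument (the paper's contracting-itineraries lemma and its consequence that every forward point orbit of the IFS is dense in $[0,1]$); with that in hand, your treatment of transitivity and of both types of homoclinic relations goes through.
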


\begin{remark}[Discussion of hypotheses]
	Homoclinic relations for skew products are recalled in Appendix \ref{a.apendix}, where also the above proposition is proved. Condition (H4) will provide so-called expanding itineraries which in turn imply the homoclinic relations and their density for expanding points, while condition (H3) takes care of so-called contracting itineraries and the corresponding homoclinic relations. Thus, we conclude transitivity. The proof follows largely blender-like standard arguments used in~\cite{DiaGel:12}. Condition (H2') is only used for simplicity and also to follow more closely the model in \cite{DiaMar:15}. The key facts remain true assuming only (H2), in particular we never use the fact that for (H2') the map $f_1$ is an involution.

We observe that (H3) and (H4) demand a certain ``asymmetry" of the fiber map $f_0$. In Section~\ref{app:bad} we will provide a ``symmetric'' example which satisfies (H1) and for which the associated skew product fails to be transitive and its only measure of maximal entropy is nonhyperbolic and supported on $\Lambda_{\rm ex}$.
\end{remark}

\begin{remark}[{Examples in $\Sigma_3\times \mathbb{S}^1$ and $\Sigma_2\times \mathbb{S}^1$}]\label{rem:crcle}
We can produce a transitive example in  $\Sigma_3\times \mathbb{S}^1$ with properties analogous to the one in Proposition~\ref{pro:22} as follows.
Obtain $\mathbb{S}^1$ by identifying the boundary points of $[0,2]$. Define $g_0,g_1,g_2\colon \mathbb{S}^1\to \mathbb{S}^1$ as follows
\begin{itemize}
\item
 $g_0(x)=f_0(x)$ if $x\in [0,1]$ and $g_0(x)=f_0 (x-1)$ if $x\in [0,2)$.
\item 
$g_1(x)=f_1(x)=1-x$ if $x\in [0,1]$ and $g_1(x)=3-x$ if $x\in [0,2)$,
\item
$g_2(x)=x+1 \mod 2$ (or any appropriate map preserving $\{0,1\}$ and interchanging the interior of the intervals $(0,1)$ and $(1,2)$). 
\end{itemize}
These maps are depicted in Figure~\ref{fig.circle}. 
In this case, $\Lambda_{\mathrm{ex}}=\Sigma_3\times \{0,1\}$ and $\Lambda_{\mathrm{core}}=\Sigma_3\times ((0,1)\cup (1,2))$. 
We observe that the IFS $\{g_0,g_1,g_2\}$ does not satisfy the axioms stated in~\cite{DiaGelRam:17a} which would prevent the existence of exposed pieces of dynamics. Although the Axioms Transitivity and CEC (controlled expanding forward/backward covering) can be verified, the Axiom Accessibility is not satisfied (the points $\{0,1,2\}$ cannot ``be reached from outside").

Note that the skew product on $\Sigma_2\times\bS^1$ generated by the fiber maps $\{g_0,g_1\}$ as above is not transitive and has two open ``transitive'' components $\Lambda_{\rm core}^-$ and $\Lambda_{\rm core}^+$ contained in $\Sigma_2\times (0,1)$ and $\Sigma_2\times(1,2)$, respectively, which are glued at the ``exposed" piece $\Sigma_2\times\{0,1\}$. The additional map $g_2$ in the previous example just mixes the two components $\Lambda_{\rm core}^\pm$ while preserving the exposed piece.
\end{remark}

\begin{figure}[h]
\begin{overpic}[scale=.35,bb=0 0 330 330]{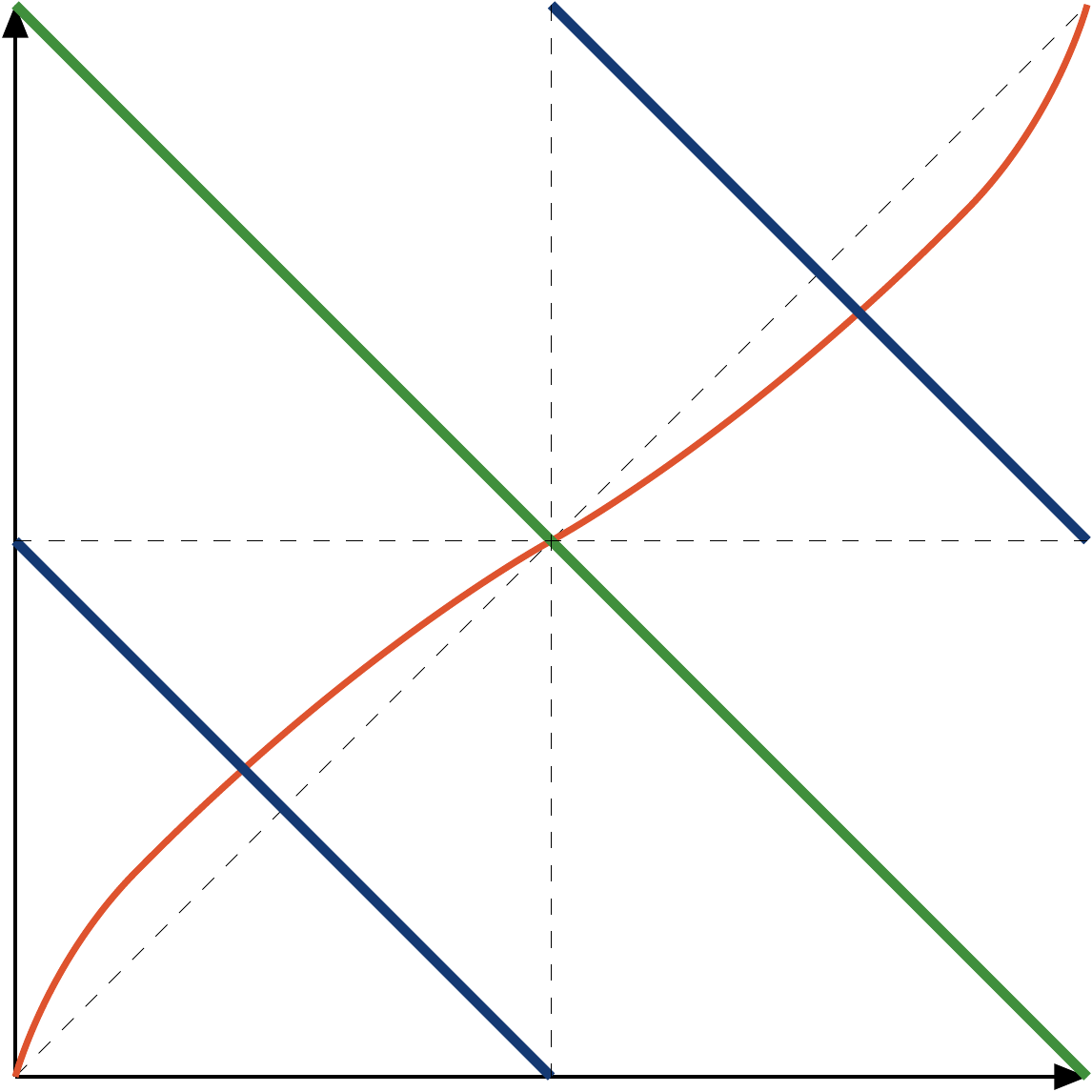}
 \put(65,90){$g_1$}
  \put(15,42){$g_1$}
 \put(80,28){$g_2$}
  \put(56,62.5){$g_0$}
   \put(-1,-8){$0$}
   \put(49,-8){$1$}
 \put(98,-8){$2$}
\end{overpic}
\caption{Fiber maps of the example in Remark~\ref{rem:crcle}}
 \label{fig.circle}
 \end{figure}

Let $\cM$  be the space of all $F$-invariant  measures and equip it with the weak$\ast$ topology. It is well known that it is a compact metrizable topological space~\cite[Chapter 6.1]{Wal:82}. Denote by $\cM_{\rm erg}=\cM_{\rm erg}(\Sigma_2\times[0,1])$ the subset of ergodic measures. 
We denote by  $\cM_{\rm erg}(\Lambda_{\rm ex})$ the ergodic measures supported on $\Lambda_{\rm ex}$ and by $\cM_{\rm erg}(\Lambda_{\rm core})$ the ergodic measures supported on $\Lambda_{\rm core}$. Observe that
\[
	\cM_{\rm erg}
	=\cM_{\rm erg}(\Lambda_{\rm core})\cup \cM_{\rm erg}(\Lambda_{\rm ex}).
\]	 
We will study  this system by separately looking at measures supported on these two sets. A crucial point for us is how these two components ``glue''.

Given $X=(\xi,x)\in \Sigma_k\times [0,1]$, we consider the \emph{(fiber) Lyapunov exponent} of the map $F$ at $X$ which  is  defined by
\[
	\chi(X)
	\eqdef 
	 \lim_{n\to\pm\infty}\frac{1}{ n}\log\,\lvert (f^n_\xi)'(x)\lvert,
	 \quad\text{ where }\quad
	 f_\xi^n\eqdef f_{\xi_{n-1}}\circ\ldots\circ f_{\xi_0},
\]
where we assume that both limits exist and are equal. Note that  it is nothing but the Birkhoff average of a continuous function. For every $F$-ergodic Borel probability measure $\mu$ the Lyapunov exponent is almost everywhere well defined and constant. This common value of exponents will be called the \emph{Lyapunov exponent} of  $\mu$ and denoted by $\chi(\mu)$. An ergodic measure $\mu$ is  {\emph{nonhyperbolic}} if $\chi(\mu)=0$ and {\emph{hyperbolic}} otherwise. 

Accordingly, we split the set of all \emph{ergodic} measures in $\Lambda_{\rm core}$ and  consider the decomposition
\[
	\cM_{\rm erg}(\Lambda_{\rm core})
	=\cM_{\rm erg,<0}(\Lambda_{\rm core})\cup\cM_{\rm erg,0}(\Lambda_{\rm core})
		\cup\cM_{\rm erg,>0}(\Lambda_{\rm core})
\]
into measures with negative, zero, and positive fiber Lyapunov exponent, respectively. Analogously, we consider
\[	\cM_{\rm erg}(\Lambda_{\rm ex})
	=\cM_{\rm erg,<0}(\Lambda_{\rm ex})\cup\cM_{\rm erg,0}(\Lambda_{\rm ex})
		\cup\cM_{\rm erg,>0}(\Lambda_{\rm ex}).
\]

Properties of the space of measures are summarized in the next theorem. Given  $\cN\subset\cM$,  its \emph{closed convex hull} is the smallest closed convex set containing $\cN$.

\begin{theorem}\label{thepro:measurespace}
Assume that $F$ defined in~\eqref{eq:defF} satisfies the hypotheses (H1) and (H2). Then the space $\cM(\Sigma_2\times[0,1])$ has the following properties:
\begin{enumerate}
\item Periodic orbit measures are dense in the closed convex hull of $\cM_{\rm erg}(\Lambda_{\rm ex})$. 
\item 	Every hyperbolic measure $\cM(\Lambda_{\rm ex})$ has positive weak$\ast$ distance from $\cM(\Lambda_{\rm core})$. 
\item		Every nonhyperbolic measure $\cM(\Lambda_{\rm ex})$ 
can be weak$\ast$ approximated by periodic measures in $\cM_{\rm erg}(\Lambda_{\rm core})$.
\item Each of the components $\cM_{\rm erg,\star}(\Lambda_{\rm ex})$, $\star\in\{<0,0,>0\}$ is nonempty. 
\end{enumerate}
Moreover, if hypotheses (H2'), (H3), and (H4) additionally hold, then
\begin{enumerate}
\item[5.] The set $\cM_{\rm erg,\star}(\Lambda_{\rm core})$, $\star\in\{<0,>0\}$, is nonempty. 
\item[6.] The set $\cM_{\rm erg,<0}(\Lambda_{\rm core})$ and the set $\cM_{\rm erg,>0}(\Lambda_{\rm core})$ are arcwise connected, respectively.
\end{enumerate}
\end{theorem}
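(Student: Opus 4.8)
My plan is to derive the four $C^1$-general assertions (items 1--4) from the horseshoe structure of $\Lambda_{\rm ex}$ together with one explicit coboundary, and the two remaining assertions (items 5--6) from Proposition~\ref{pro:22}. \emph{Items 1 and 4.} The restriction $F|_{\Lambda_{\rm ex}}$ is a $\bZ/2$--extension of the full shift (the fibre coordinate in $\{0,1\}$ is flipped exactly when $\xi_0=1$), hence conjugate to a topologically mixing subshift of finite type and in particular satisfies the periodic specification property. Sigmund's theorem then gives density of periodic measures in $\cM(\Lambda_{\rm ex})=\clocon\cM_{\rm erg}(\Lambda_{\rm ex})$, which is item~1. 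For item~4, the fixed saddles $P_{\rm ex}=(\overline 0,0)$ and $Q_{\rm ex}=(\overline 0,1)$ carry ergodic measures with exponents $\log\beta>0$ and $\log\lambda<0$, so $\cM_{\rm erg,>0}(\Lambda_{\rm ex})$ and $\cM_{\rm erg,<0}(\Lambda_{\rm ex})$ are nonempty; since $F|_{\Lambda_{\rm ex}}$ has specification, the set of Lyapunov exponents of its ergodic measures is an interval, which contains $\log\lambda<0<\log\beta$ and hence $0$, producing a nonhyperbolic ergodic measure.

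\emph{Item 2 (the key mechanism).} I set $h(x)=\log\big(x(1-x)\big)$ on $(0,1)$ and $\Delta(\xi,x)=h(f_{\xi_0}(x))-h(x)$. A short Taylor estimate at the endpoints shows $\frac{f(x)(1-f(x))}{x(1-x)}\to|f'(p)|$ as $x\to p\in\{0,1\}$ for $f\in\{f_0,f_1\}$; hence $\Delta$ extends to a \emph{continuous} function $\widetilde\Delta$ on $\Sigma_2\times[0,1]$ whose restriction to $\Lambda_{\rm ex}$ equals the fibre log-derivative $\psi(\xi,x)=\log|f_{\xi_0}'(x)|$. For ergodic $\nu$ with $\nu(\Lambda_{\rm core})=1$ one has, on the interior, $\sum_{k=0}^{N-1}\widetilde\Delta\circ F^k=h\circ F^N-h$; choosing for $\nu$-a.e.\ $X$ along Poincar\'e recurrence a subsequence $N_j$ with $F^{N_j}X\to X$ gives $\frac1{N_j}(h(F^{N_j}X)-h(X))\to0$, so by Birkhoff $\int\widetilde\Delta\,d\nu=0$, and by ergodic decomposition $\int\widetilde\Delta\,d\rho=0$ for every $\rho\in\cM(\Lambda_{\rm core})$. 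On the other hand $\int\widetilde\Delta\,d\mu=\int\psi\,d\mu=\chi(\mu)\neq0$ for hyperbolic $\mu\in\cM(\Lambda_{\rm ex})$. Since $\rho\mapsto\int\widetilde\Delta\,d\rho$ is weak$\ast$ continuous, $\mu$ lies at positive weak$\ast$ distance from the hyperplane $\{\int\widetilde\Delta=0\}\supseteq\cM(\Lambda_{\rm core})$, proving item~2.

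\emph{Item 3.} Let $\mu\in\cM(\Lambda_{\rm ex})$ be nonhyperbolic. By item~1 and weak$\ast$ continuity of $\chi$, I approximate $\mu$ by a periodic measure on an orbit $\mathcal O\subset\Lambda_{\rm ex}$ with $|\chi(\mathcal O)|$ arbitrarily small. I then push $\mathcal O$ into the core: using that $f_0$ expands by $\approx\beta$ near $x=0$ and contracts by $\approx\lambda$ near $x=1$ while $f_1$ interchanges the two boundary neighbourhoods, I build an interior periodic orbit that shadows $\mathcal O$ at distance $\delta$ from the boundary for an arbitrarily long time and is closed up by a short correction block whose relative weight tends to $0$. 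As $\delta\to0$ and the shadowing length $\to\infty$, the resulting core periodic measures converge weak$\ast$ to $\mathcal O$, and a diagonal choice converges to $\mu$. The main obstacle is exactly this closing-up step---producing a genuine interior periodic point near the boundary cycle with controlled exponent---which is where the balance $\chi\approx0$ is used.

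\emph{Items 5 and 6.} Assume now (H2'), (H3), (H4), so Proposition~\ref{pro:22} applies. The expanding itineraries furnished by (H4) and the contracting itineraries furnished by (H3) produce, respectively, a fibre-expanding and a fibre-contracting hyperbolic periodic orbit inside $\Lambda_{\rm core}$, whose periodic measures lie in $\cM_{\rm erg,>0}(\Lambda_{\rm core})$ and $\cM_{\rm erg,<0}(\Lambda_{\rm core})$; this is item~5. For item~6 I use that, by Proposition~\ref{pro:22}, all fibre-expanding periodic orbits in $\Lambda_{\rm core}$ are pairwise homoclinically related (and likewise all fibre-contracting ones). Working with $\cM_{\rm erg,>0}$, homoclinic relatedness yields for any two such orbits a horseshoe in $\Lambda_{\rm core}$ containing both, within which I connect the two periodic measures by a weak$\ast$-continuous arc of ergodic measures obtained by concatenating the two orbit blocks in a continuously varying proportion with block lengths tending to infinity; combined with a density step joining an arbitrary ergodic measure to a periodic one by such an arc, this gives arcwise connectedness of $\cM_{\rm erg,>0}(\Lambda_{\rm core})$, and the case $\cM_{\rm erg,<0}(\Lambda_{\rm core})$ is symmetric. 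The delicate point is to keep the exponent uniformly bounded away from $0$ and the measures supported inside the open core along the whole arc, so that the path never leaves the class.
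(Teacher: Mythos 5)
Your items 1, 4, and 5 follow the paper's own route: Sigmund's theorem plus the path-connectedness of $\cM_{\rm erg}(\Lambda_{\rm ex})$ and weak$\ast$ continuity of $\mu\mapsto\chi(\mu)$ give items 1 and 4, and the periodic orbits produced by the expanding/contracting itineraries (Lemmas~\ref{l.c.expandingperiodic} and~\ref{l.c.contractingperiodic}) give item 5. Your item 2 is correct but genuinely different from the paper's argument: the paper (Proposition~\ref{pro:positivedist}) proves a quantitative distortion inequality $\lvert\chi(\nu)\rvert\le K_1\nu(\Sigma_2\times[\delta,1-\delta])+K_2\Delta(\delta)$ for all $\nu\in\cM(\Lambda_{\rm core})$, whereas you exhibit the continuous function $\widetilde\Delta(\xi,x)=\log\bigl(f_{\xi_0}(x)(1-f_{\xi_0}(x))\bigr)-\log\bigl(x(1-x)\bigr)$, which is a coboundary on the invariant set $\Lambda_{\rm core}$ (so $\int\widetilde\Delta\,d\rho=0$ for every $\rho\in\cM(\Lambda_{\rm core})$, by Poincar\'e recurrence plus ergodic decomposition, the unboundedness of $h$ being harmless there) and restricts on $\Lambda_{\rm ex}$ to the derivative cocycle, so $\int\widetilde\Delta\,d\mu=\chi(\mu)$ there. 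Separation by this weak$\ast$-continuous functional yields item 2 with no distortion estimates at all; this is cleaner and more conceptual than the paper's proof.

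The genuine gaps are in items 3 and 6, and in both cases what you defer is exactly the hard content. For item 3 (the paper's Proposition~\ref{pro:measzeroisapprox}), your closing-up step is the entire proof, and three ingredients are missing. (i) Control of the fibre coordinate along the shadowing: the paper must start at the carefully tuned distance $\delta(n)=\delta e^{-2\max\{\psi(n),\sqrt n\}}e^{-n\Delta(\delta e^{-\sqrt n})}$ from the boundary so that the nonaffine distortion accumulated over $n$ steps does not destroy the estimates; in your variant, where the shadowed orbit is periodic with exponent $\epsilon_k\ne0$, the drift of the fibre coordinate is \emph{linear} in time, so the shadowing time must additionally be coupled to $\epsilon_k$, a bookkeeping you never set up. (ii) A mechanism that actually produces an interior periodic point: the paper chooses the shadowing length so that the itinerary has an odd number of $1$'s, making $g=f_0^{M(n)}\circ f_\xi^n\circ f_0^{N(n)}$ orientation reversing, whence it has a fixed point in the fundamental domain $[1/2,f_0(1/2))$; without such an argument the composed interval map need not have any interior fixed point, so your ``genuine interior periodic point'' may simply not exist for an unlucky closing block. (iii) The estimate $N(n)+M(n)=o(n)$, which is precisely where $\chi\approx 0$ enters and which makes the correction block have vanishing weight. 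Flagging this step as ``the main obstacle'' identifies the gap but does not fill it.

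For item 6 (the paper's Proposition~\ref{pro:Katok}), connecting two \emph{periodic} core measures through a common horseshoe is fine, but the phrase ``a density step joining an arbitrary ergodic measure to a periodic one by such an arc'' conceals the essential ingredient. A general $\mu\in\cM_{\rm erg,>0}(\Lambda_{\rm core})$ is not periodic and not supported on any horseshoe you have constructed; to reach it one needs (a) a Katok-type closing result in the $C^1$ dominated setting --- the paper's Lemma~\ref{lem:sylv}, quoted from Crovisier --- producing hyperbolic periodic measures $\nu_n\to\mu$ whose orbits stay uniformly away from $\Sigma_2\times\{0,1\}$ (so that Proposition~\ref{pro:22} makes them homoclinically related to one another), and (b) the concatenation of the arcs joining $\nu_n$ to $\nu_{n+1}$ on parameter intervals shrinking to an endpoint, with a verification of continuity at that endpoint with value $\mu$. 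Neither appears in your sketch; without (a) your argument connects only periodic measures to each other. Incidentally, the ``delicate point'' you single out (exponents uniformly away from $0$) is not the real issue, since on a fibre-expanding horseshoe the exponents are automatically uniformly positive; the issue is reaching non-periodic measures at all.
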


The fact that there are ergodic measures with zero Lyapunov exponent and positive entropy in $\cM_{\rm erg}(\Lambda_{\rm core})$ can be shown using methods in \cite{BocBonDia:16}, we refrain from discussing this here. We also refrain from studying how such measures are approached by hyperbolic ergodic measures in $\cM_{\rm erg}(\Lambda_{\rm core})$ as this is much more elaborate and will be part of an ongoing project (see \cite{DiaGelRam:17a} for techniques in a slightly different but technically simpler context).

\begin{figure}[h] \begin{overpic}[scale=.35,bb=0 0 330 330]{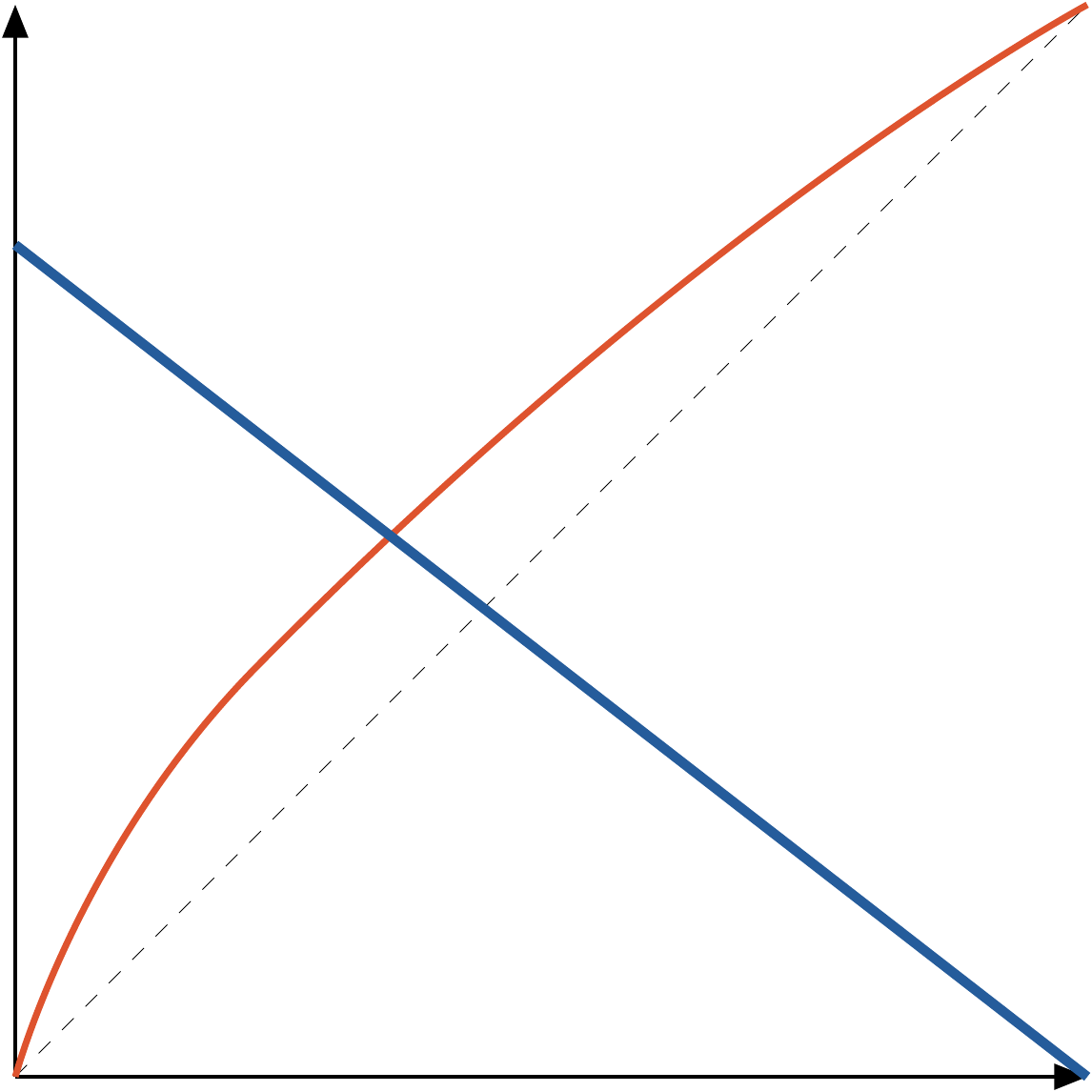}
 \put(60,85){$f_0$}
 \put(80,28){$f_{1,t}(x)=t(1-x)$}
 \put(-8,73){$t$}
 \put(0,-10){$0$}
 \put(96,-10){$1$}
\end{overpic}
\caption{Porcupine-like horseshoes.}
\label{Fig:plike}
\end{figure}

\begin{remark}[Porcupine vs. totally spiny porcupine]
Let us compare the porcupine-like horseshoes corresponding to the  interval maps in Figure~\ref{Fig:plike} with the  ``totally spiny porcupine'' discussed here (corresponding to Figure~\ref{Fig:1}).
Porcupine-like horseshoes were introduced in \cite{DiaHorRioSam:09} as model for internal heterodimensional cycles in horseshoes.
Later these horseshoes were generalized and studied in a series of papers from various points of view: topological (\cite{DiaGel:12,DiaGelRam:13, DiaGelRam:14}, thermodynamical (\cite{LepOliRio:11, DiaGelRam:14,RamSiq:17,RioSiq:18}) and fractal (\cite{DiaMar:15})%
\footnote{The term ``porcupine'' coined in \cite{DiaGel:12} refers to the rich topological fiber structure of the homoclinic class, which is simultaneously composed of  uncountable many fibers which are continua and  uncountable many ones which are just points. In this paper, all fibers are full intervals.}. 
This line of research is also closely related to the study of so-called \emph{bony attractors} and \emph{sets} (see~\cite{IlyShi:17} for a survey and references). 
One important motivation to study those models is that they serve as a prototype of partially hyperbolic dynamics.

Let us consider the map $F_t$ defined as in \eqref{eq:defF} but with the maps $f_0,f_{1,t}$ as in Figure~\ref{Fig:plike} in the place of $f_0,f_1$ in Figure~\ref{Fig:1}. Let $\Gamma^t$ be the maximal invariant set of $F_t$. In the above cited porcupine-like horseshoes, one also splits the maximal invariant set $\Gamma^t$ (which is nonhyperbolic and transitive) into two parts $\Gamma^t_{\rm ex}$ and $\Gamma^t_{\rm core}$ in the same spirit as in~\eqref{eq:defLambda} (and with analogous properties as in Proposition~\ref{pro:22}). In that case $\Gamma^t_{\rm ex}$ consists only of one fiber expanding point $Q=(0^\bZ,0)$ and $\Gamma^t_{\rm core}$ is its complement that contains the fiber contracting point $P=(0^\bZ,1)$. The space of ergodic measures of $\Gamma^t$ splits into \emph{two}  components, each of them connected but at positive distance from each other, which are $\{\delta_Q\}$ and $\cM_{\rm erg}(\Gamma^t_{\rm core})$ (see, in particular,~\cite{LepOliRio:11}). In the transition from a porcupine to a totally spiny porcupine (which occurs at $t=1$), the space of ergodic measures becomes connected (stated in Theorem~\ref{thepro:measurespace}) and this happens as follows. The measures $\delta_Q$ and $\delta_P$ form part of the space of ergodic measures of an abstract horseshoe $\Lambda_{\rm ex}$. At the same time, the measure $\delta_P$ detaches from $\cM_{\rm erg}(\Lambda_{\rm core})$ which is a consequence of the fact that the saddle $P$ is not homoclinically related to any saddle in $\Lambda_{\rm core}$, similarly for $Q$. The components $\cM_{\rm erg}(\Lambda_{\rm ex})$ and $\cM_{\rm erg}(\Lambda_{\rm core})$ become glued through nonhyperbolic measures.
\end{remark}
 
 \begin{theorem}\label{the:1}
 Assume that $F$ defined in~\eqref{eq:defF} satisfies the hypotheses (H1) and (H2). 
 	Then there is a unique measure $\mu_{\rm max}^{\rm ex}$ of maximal entropy $\log2$ in $\cM_{\rm erg}(\Lambda_{\rm ex})$ and its Lyapunov exponent is given by
\[
	\chi(\mu_{\rm max}^{\rm ex})
	= \frac14\left(\log\, f_0'(0)+\log\, f_0'(1)
						+\log\, \lvert f_1'(0)\rvert+\log\, \lvert f_1'(1)\rvert\right).
\]
	Moreover, if the measure $\mu_{\rm max}^{\rm ex}$ is hyperbolic then there exists at least one measure of maximal entropy in $\cM_{\rm erg}(\Lambda_{\rm core})$. 
	More precisely, if the measure $\mu_{\rm max}^{\rm ex}$ has positive (negative) Lyapunov exponent then there exists a measure of maximal entropy with nonpositive (nonnegative) exponent  in $\cM_{\rm erg}(\Lambda_{\rm core})$.
 \end{theorem}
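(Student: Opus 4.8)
The plan is to first reduce the notion of measure of maximal entropy (for brevity, an \emph{m.m.e.}) to a condition on the base factor, and then to manufacture core measures of each exponent sign by a soft volume argument. Since $\pi\colon\Sigma_2\times[0,1]\to\Sigma_2$, $\pi(\xi,x)=\xi$, is a factor map onto the full shift and the fiber maps are interval diffeomorphisms (hence carry no entropy in the fiber direction), one has $h_{\rm top}(F)=\log 2$. Consequently, for any $F$-invariant $\mu$ one has $h_\mu(F)\ge h_{\pi_\ast\mu}(\sigma)$, so $\pi_\ast\mu=\bP$ (the Bernoulli measure, the unique m.m.e. of $\sigma$) forces $h_\mu(F)=\log 2$; conversely, by the Abramov--Rokhlin formula with vanishing fiber entropy, every m.m.e. projects to $\bP$. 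Thus the m.m.e.'s of $F$ are exactly the invariant lifts of $\bP$. On $\Lambda_{\rm ex}=\Sigma_2\times\{0,1\}$ the measure $\mu_{\rm max}^{\rm ex}\eqdef\bP\times\frac12(\delta_0+\delta_1)$ is invariant (because $f_0$ fixes and $f_1$ swaps $\{0,1\}$) and projects to $\bP$, hence is an m.m.e. supported on $\Lambda_{\rm ex}$; as $F|_{\Lambda_{\rm ex}}$ is a transitive subshift of finite type, it is the unique one there. Its exponent is then immediate: under $\mu_{\rm max}^{\rm ex}$ the pair $(\xi_0,x)$ is equidistributed on $\{0,1\}^2$, and evaluating $\log|f_{\xi_0}'(x)|$ at the four corners yields exactly the stated average of $\log f_0'(0),\log f_0'(1),\log|f_1'(0)|,\log|f_1'(1)|$.

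For the remaining assertions the key is to produce m.m.e.'s of controlled exponent sign. I would exploit that each composition $f_\xi^n$ is a diffeomorphism of $[0,1]$ onto itself, so that $\int_0^1|(f_\xi^n)'(x)|\,dx=1$ and hence, by Jensen's inequality,
\[
\int_0^1\log|(f_\xi^n)'(x)|\,dx\le\log\int_0^1|(f_\xi^n)'(x)|\,dx=0.
\]
Set $\psi(\xi,x)\eqdef\log|f_{\xi_0}'(x)|$, so $\chi(\cdot)=\int\psi\,d(\cdot)$, and let $\mu_n\eqdef\frac1n\sum_{k=0}^{n-1}(F^k)_\ast(\bP\times\Leb)$. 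Telescoping the derivative cocycle turns $\int\psi\,d\mu_n$ into $\frac1n\iint\log|(f_\xi^n)'(x)|\,d\Leb(x)\,d\bP(\xi)$, which by the display is $\le0$. Any weak$\ast$ limit $\mu_\infty$ is $F$-invariant, projects to $\bP$ (each $\mu_n$ does), hence is an m.m.e., and satisfies $\chi(\mu_\infty)=\int\psi\,d\mu_\infty\le0$. Running the identical construction for $F^{-1}$ (whose fiber maps $f_i^{-1}$ are again surjective diffeomorphisms and whose fiber exponent equals $-\chi$ on $F$-invariant measures) produces an m.m.e. $\nu_\infty$ with $\chi(\nu_\infty)\ge0$.

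Finally I would conclude by combining this with the uniqueness on $\Lambda_{\rm ex}$. Suppose $\chi(\mu_{\rm max}^{\rm ex})>0$. Decomposing $\mu_\infty$ into ergodic components and using that $\bP$ is ergodic, each component again projects to $\bP$ and is therefore an ergodic m.m.e.; since $\int\chi\,d(\text{components})=\chi(\mu_\infty)\le0$, some ergodic component $\mu'$ has $\chi(\mu')\le0<\chi(\mu_{\rm max}^{\rm ex})$. Then $\mu'\ne\mu_{\rm max}^{\rm ex}$, and as $\mu_{\rm max}^{\rm ex}$ is the only ergodic m.m.e. carried by the invariant closed set $\Lambda_{\rm ex}$, ergodicity forces $\mu'(\Lambda_{\rm ex})=0$, i.e.\ $\mu'\in\cM_{\rm erg}(\Lambda_{\rm core})$ is the desired m.m.e.\ of nonpositive exponent. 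The case $\chi(\mu_{\rm max}^{\rm ex})<0$ is symmetric via $\nu_\infty$, and the bare existence statement in the hyperbolic case follows at once.

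The step I expect to be the genuine obstacle is exactly this sign control: a priori an m.m.e. detached from $\Lambda_{\rm ex}$ could carry either exponent sign, and since Theorem~\ref{the:1} assumes only (H1)--(H2) there is no information on the interior derivatives of $f_0,f_1$ to appeal to. The point of the argument above is that the volume identity $\int_0^1|(f_\xi^n)'|\,dx=1$ together with Jensen and its time-reversed counterpart pins the sign down using nothing beyond the fact that the fiber maps are surjective interval diffeomorphisms; the only ingredients beyond this are the standard facts that $h_{\rm top}(F)=\log 2$, that fiber entropy vanishes, and that $F|_{\Lambda_{\rm ex}}$ is a transitive subshift of finite type, all of which are already available.
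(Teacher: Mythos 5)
Your proof is correct, and it takes a genuinely different route from the paper's. Both arguments share the same backbone: the relativized variational principle of Ledrappier--Walters together with vanishing fiber entropy identifies the measures of maximal entropy of $F$ with the invariant lifts of the $(\frac12,\frac12)$-Bernoulli measure, and the conjugacy of $F|_{\Lambda_{\rm ex}}$ with a transitive subshift of finite type gives uniqueness on $\Lambda_{\rm ex}$ (your justification of zero fiber entropy --- compositions of monotone interval maps --- is in fact cleaner than the paper's ``$\pi$ is 2-1'' remark). The differences are in the two key steps. For sign control, the paper proves Lemma~\ref{lem:simmea} by lifting periodic base measures to $F$-periodic orbits, using that each $f_\xi^n$ is a diffeomorphism of $[0,1]$ preserving $\{0,1\}$ and hence has a fixed point whose derivative has modulus at least $1$; weak$\ast$ limits and ergodic decomposition then give ergodic lifts of either sign. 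You instead take Ces\`aro averages of $\widehat\nu_{\rm max}\times\Leb$ and use the volume identity $\int_0^1\lvert(f_\xi^n)'\rvert\,dx=1$ plus Jensen, passing to $F^{-1}$ for the opposite sign; this is equally soft and equally general (it would prove Lemma~\ref{lem:simmea} for an arbitrary base measure as well). To exclude the resulting measure from $\Lambda_{\rm ex}$, the paper develops the symmetric/mirror-measure formalism (Lemmas~\ref{lem:lemma} and~\ref{l.mirrormeasure}, Corollary~\ref{cor:maxentsym}): the measure of maximal entropy is symmetric, a symmetric measure has no mirror, and the mirror is the only other ergodic candidate in $\Lambda_{\rm ex}$ with the same projection. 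You bypass this machinery entirely: your ergodic component has exponent of weakly opposite sign, hence differs from $\mu_{\rm max}^{\rm ex}$, and uniqueness of the measure of maximal entropy on the transitive subshift of finite type together with ergodicity forces the component into $\Lambda_{\rm core}$. What each approach buys: yours is shorter and self-contained for Theorem~\ref{the:1}, and your explicit identification $\mu_{\rm max}^{\rm ex}=\widehat\nu_{\rm max}\times\frac12(\delta_0+\delta_1)$ (invariant because $f_0$ fixes and $f_1$ swaps $\{0,1\}$) makes the exponent formula transparent, where the paper merely asserts an easy computation with the Parry measure; the paper's mirror/twin machinery, in turn, yields the stronger unnumbered corollary of Section~\ref{sec:twinsetc} --- a core twin of equal entropy exists for \emph{every} hyperbolic symmetric ergodic measure on $\Lambda_{\rm ex}$, not only the measure of maximal entropy, a case where your uniqueness shortcut is unavailable --- as well as the exponent relation for mirror pairs in Lemma~\ref{lem:outro}.
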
 

 Note that the topological structure of $\cM(\Lambda_{\rm ex})$ (items 1. and 4.  in Theorem~\ref{the:1}) are immediate consequences of the fact that the dynamics of $F$ on $\Lambda_{\rm ex}$ is conjugate to a subshift of finite type (see Section~\ref{sec:twinsetc} for details).  

 Note that the under the hypotheses of the above theorem, we do not know if the measure of maximal entropy in $\Lambda_{\rm core}$ is hyperbolic or not.

\begin{remark}
	In view of Theorem~\ref{the:1}, choosing the derivatives of the fiber maps at $0$ and $1$ appropriately, one obtains one measure of maximal entropy $\mu_{\rm max}^{\rm ex}$ which is nonhyperbolic. Note that condition (H4) is  incompatible with such a choice, and hence it is unclear if the system is transitive (compare Proposition \ref{pro:22}).
	
	Similar arguments apply to the examples discussed in Remark~\ref{rem:crcle}.
	It is interesting to compare to the results in \cite{TahYan:} where maps with ``sufficiently high entropy measures" are always hyperbolic, though there a key ingredient is accessibility which is missing here.
\end{remark}

	In Appendix \ref{a.parabolic}, we provide examples where the system is transitive and exhibits a nonhyperbolic measure of maximal entropy in $\cM_{\rm erg}(\Lambda_{\rm ex})$, proving the following theorem.
	
\begin{theorem}\label{theoremA8a}
	There are maps $\tilde F$ defined as in~\eqref{eq:defF} whose fiber maps $\tilde f_0,\tilde f_1$ satisfy\begin{enumerate}
\item $\tilde f_0$ has (exactly) two fixed points $\tilde f_0(0)=0$ and $\tilde f_0(1)=1$,
with  $\tilde f_0'(0)=1=\tilde f_0'(1)$,
\\[-0.4cm]
\item 
$\tilde f_1(x)=1-x$, 
\end{enumerate}
such that $\tilde F$ is topologically transitive and that every pair of fiber expanding hyperbolic periodic orbits and every pair of fiber contracting hyperbolic periodic orbits in $\Lambda_{\rm core}$ are homoclinically related, respectively.  In particular, the unique measure of maximal entropy in $\cM_{\rm erg}(\Lambda_{\rm ex})$ is nonhyperbolic. 
\end{theorem}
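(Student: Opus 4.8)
The plan is to reduce the final ``In particular'' assertion to Theorem~\ref{the:1} and to spend the main effort constructing a single map $\tilde F$ (with $\tilde f_1(x)=1-x$ fixed and $\tilde f_0$ to be specified) for which transitivity and the two families of homoclinic relations hold. Granting those two properties, the nonhyperbolicity of the maximal entropy measure is immediate. Indeed, $\tilde F|_{\Lambda_{\rm ex}}$ is conjugate to the same subshift of finite type as in the hyperbolic case, since this conjugacy uses only that $\tilde f_0$ fixes $\{0,1\}$ and $\tilde f_1$ interchanges them, i.e. conditions (1) and (2). Consequently the part of Theorem~\ref{the:1} concerning $\Lambda_{\rm ex}$ (uniqueness of $\mu_{\rm max}^{\rm ex}$ and the formula for $\chi(\mu_{\rm max}^{\rm ex})$) depends only on the symbolic structure and the boundary derivative data, neither of which requires hyperbolicity of the endpoints, and thus applies verbatim here. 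Substituting $\tilde f_0'(0)=\tilde f_0'(1)=1$ and $|\tilde f_1'(0)|=|\tilde f_1'(1)|=1$ yields $\chi(\mu_{\rm max}^{\rm ex})=\tfrac14(0+0+0+0)=0$, so $\mu_{\rm max}^{\rm ex}$ is nonhyperbolic.

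The construction of $\tilde f_0$ rests on an elementary constraint: since $\tilde f_0$ is an increasing diffeomorphism of $[0,1]$ with $\tilde f_0(0)=0$ and $\tilde f_0(1)=1$, one has $\int_0^1 \tilde f_0'(x)\,dx = 1$, so unless $\tilde f_0=\id$ its derivative must exceed $1$ on some interior interval and fall below $1$ on another. I would therefore choose $\tilde f_0$ so that $\tilde f_0'$ is tangent to $1$ to high order at the endpoints (making $0$ and $1$ parabolic, with no further fixed points), rises to a large value $M\gg1$ on an interior interval $I_{\rm exp}\Subset(0,1)$, and drops to a small value $m\ll1$ on a second interior interval $I_{\rm con}\Subset(0,1)$. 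The quantitative role played by (H4) and (H3) in the hyperbolic case, namely producing expanding and contracting itineraries respectively, is now taken over by the interior bounds $M$ and $m$, which can be prescribed freely because they are decoupled from the neutral boundary behavior (note that $\tilde f_0'$ can no longer be monotone, so the specific form of (H3) is abandoned in favor of an interior expansion/contraction condition).

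With such an $\tilde f_0$ in hand, I would re-run the blender-type argument used to prove Proposition~\ref{pro:22} in Appendix~\ref{a.apendix}, but now localized to a compact reference interval $J\Subset(0,1)$ disjoint from the parabolic fixed points. Using the expanding region $I_{\rm exp}$ I produce expanding itineraries and verify the controlled expanding covering property of $\tilde F$ on $J$, yielding density of fiber-expanding periodic orbits in $\Lambda_{\rm core}$ together with the fact that any two of them are homoclinically related; symmetrically, using $I_{\rm con}$ and the flip $\tilde f_1$ I obtain contracting itineraries, the analogous covering for $\tilde F^{-1}$, and the corresponding homoclinic relations among fiber-contracting periodic orbits. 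Transitivity of $\tilde F$ then follows as in Proposition~\ref{pro:22} by combining the forward and backward covering with the transitivity of the base shift, the flip $\tilde f_1$ allowing one to pass between neighborhoods of the two endpoints and between the expanding and contracting interior regions.

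The main obstacle is precisely the loss of hyperbolicity at the boundary fixed points: condition (H4) degenerates, since $\beta-1=0$ makes $\varkappa$ undefined, so I cannot quote Proposition~\ref{pro:22} directly and must instead manufacture both families of itineraries from the interior of $\tilde f_0$ and show that all covering estimates can be confined to $J\Subset(0,1)$, robustly and independently of the arbitrarily slow behavior of orbits lingering near the neutral fibers over $0$ and $1$. In other words, the delicate point is to verify that the blender mechanism survives when accessibility through the endpoints is absent (compare the discussion in Remark~\ref{rem:crcle}), so that the homoclinic web in $\Lambda_{\rm core}$ is generated entirely away from the parabolic fixed points; once this is established, transitivity and the homoclinic relations follow by the same scheme as before, and the nonhyperbolicity of $\mu_{\rm max}^{\rm ex}$ is read off from Theorem~\ref{the:1} as above.
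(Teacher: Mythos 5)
Your reduction of the ``In particular'' clause to Theorem~\ref{the:1} is fine and matches the paper: the uniqueness of $\mu_{\rm max}^{\rm ex}$ and the exponent formula only use the conjugacy of $\tilde F|_{\Lambda_{\rm ex}}$ with $\sigma_A$ and the boundary derivatives, so $\chi(\mu_{\rm max}^{\rm ex})=0$ follows. The gap is in the main construction. You propose to build $\tilde f_0$ from scratch, with neutral endpoints and two interior intervals $I_{\rm exp}$, $I_{\rm con}$ on which $\tilde f_0'\ge M\gg 1$ and $\tilde f_0'\le m\ll 1$, claiming these bounds ``can be prescribed freely'' and will replace (H3)--(H4). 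This cannot work as stated: your conditions are also satisfied by maps that are flip-symmetric, i.e.\ $f_1\circ\tilde f_0\circ f_1=\tilde f_0^{-1}$ (differentiating this identity gives $\tilde f_0'\big(1-\tilde f_0(x)\big)\cdot\tilde f_0'(x)=1$, so such a map can have derivative $M$ on an interior interval and $1/M$ on its mirror image, with parabolic endpoints), and by the paper's own Theorem~\ref{theoremA8b} (Appendix~\ref{app:bad}) every such symmetric example is \emph{not} topologically transitive. Hence no argument can derive transitivity from the hypotheses you impose; some quantitative asymmetry must be added, and that is exactly what (H3)--(H4) encode. The underlying misconception is the expansion mechanism: since $\tilde f_0$ has no fixed points in $(0,1)$, an orbit of the IFS cannot linger in $I_{\rm exp}$; every return to a reference interval traverses the whole middle region, passing through both $I_{\rm exp}$ and $I_{\rm con}$, and the net derivative of such a return is governed by ratios of lengths of fundamental domains at the two ends of the traverse (this is what the footnote to~\eqref{e.expansion} and Lemma~\ref{l.expandedsucessor} actually compute), not by the separate magnitudes of $M$ and $m$.

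The paper's proof avoids this difficulty entirely by a surgery rather than a new construction: start from a map $f_0$ satisfying (H1), (H3), (H4); observe that all expanding and contracting return itineraries of Appendix~\ref{a.underlying} only involve iterates inside a compact set $[a_1,b_1]\subset[\delta,1-\delta]$; then replace $f_0$ by $\tilde f_0$ with $\tilde f_0=f_0$ on $[\delta,1-\delta]$, $\tilde f_0'(0)=\tilde f_0'(1)=1$, $0$ repelling, $1$ attracting, and no fixed points in $(0,1)$. All of Lemmas~\ref{l.expandedsucessor}, \ref{l.expandedcovering}, \ref{l.c.expandingperiodic}, \ref{l.contracting}, and \ref{l.c.contractingperiodic} then hold verbatim for the modified IFS because the dynamics they use is unchanged; the only genuinely new point is that every $x\in(0,1)$ still has forward and backward iterates entering $I_0(\varepsilon)$, which follows from the repelling/attracting character of the parabolic endpoints. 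With these in hand, Proposition~\ref{pro:homrel} goes through for $\tilde F$, giving transitivity and the homoclinic relations. If you want to salvage your approach you must build the required asymmetry into $\tilde f_0$ and reprove the covering lemmas near the neutral endpoints; the simplest way of doing so is precisely the paper's surgery.
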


Note that in the above theorem this measure is also a measure of maximal entropy in $\cM_{\rm erg}(\Lambda)$, however we do not know if there is some hyperbolic measure  of maximal entropy  in $\cM_{\rm erg}(\Lambda)$. 

\emph{Mutatis mutandi}, we can perform a version of the map $\tilde F$ in $\Sigma_3\times \mathbb{S}^1$ as in Remark~\ref{rem:crcle}.
 
 Finally, in Appendix~\ref{app:bad} we present an example with a unique measure of maximal entropy which is nonhyperbolic and supported on $\Lambda_{\rm ex}$. However this example fails to be transitive. 

\begin{theorem}\label{theoremA8b}
		There are maps $F$ defined as in~\eqref{eq:defF} whose fiber maps $ f_0, f_1$ satisfy\begin{enumerate}
\item $ f_0$ has (exactly) two fixed points $ f_0(0)=0$ and $ f_0(1)=1$,
with  $ f_0'(0)=1=f_0'(1)$,
\\[-0.4cm]
\item 
$f_1(x)=1-x$. 
\end{enumerate}
such that $ F$ is not topologically transitive and has a unique measure of maximal entropy supported on $\Lambda_{\rm ex}$, which is nonhyperbolic.
\end{theorem}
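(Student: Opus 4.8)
The plan is to choose $f_0$ with parabolic fixed points and a prescribed asymmetry, and to reduce every assertion to a statement about the random walk generated by $f_0,f_1$ on $[0,1]$. I begin with the two soft facts. Since $F|_{\Lambda_{\rm ex}}$ depends only on the values of $f_0,f_1$ at $\{0,1\}$, the description of $\cM_{\rm erg}(\Lambda_{\rm ex})$ in Theorem~\ref{the:1} applies verbatim; as $f_0'(0)=f_0'(1)=1$ and $|f_1'|\equiv1$, its exponent formula gives
\[
\chi(\mu^{\rm ex}_{\max})=\tfrac14\big(\log f_0'(0)+\log f_0'(1)+\log|f_1'(0)|+\log|f_1'(1)|\big)=0,
\]
so the exposed measure of maximal entropy is nonhyperbolic. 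Next, because the fibre maps are homeomorphisms of the interval the fibre entropy vanishes, so $h_{\rm top}(F)=h_{\rm top}(\sigma)=\log2$ and $h_\mu(F)=h_{\pi_*\mu}(\sigma)$ for every invariant $\mu$, where $\pi$ is the projection onto $\Sigma_2$. Hence $\mu^{\rm ex}_{\max}$ is a global measure of maximal entropy, and every measure of maximal entropy of $F$ projects to the $(1/2,1/2)$-Bernoulli measure $\nu$, the unique measure of maximal entropy of $\sigma$.

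Uniqueness then amounts to showing that $\mu^{\rm ex}_{\max}$ is the only $F$-invariant measure projecting to $\nu$, and I would split this according to the invariant decomposition $\Lambda=\Lambda_{\rm ex}\sqcup\Lambda_{\rm core}$. On $\Lambda_{\rm ex}=\Sigma_2\times\{0,1\}$ the system is the $\bZ/2$-extension of $(\sigma,\nu)$ by the cocycle $\xi\mapsto\xi_0$; this cocycle is not a coboundary (the fixed point $1^{\bZ}$ has odd sum), so the extension is ergodic and $\nu\times\tfrac12(\delta_0+\delta_1)=\mu^{\rm ex}_{\max}$ is the only invariant measure over $\nu$ supported on $\Lambda_{\rm ex}$. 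It remains to exclude invariant measures charging $\Lambda_{\rm core}=\Sigma_2\times(0,1)$. Using the correspondence between $F$-invariant measures projecting to $\nu$ and stationary measures of the random system $\{f_0,f_1\}$ with weights $(1/2,1/2)$, this reduces to the claim that this random system admits no finite stationary measure on the open interval $(0,1)$.

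The heart of the matter — and the step I expect to be the main obstacle — is this last claim, which I would extract from the absence of fibre expansion at the parabolic endpoints. In the coordinate $G(x)=\log\frac{x}{1-x}$ the reflection acts as $G\mapsto -G$ and $f_0$ as $G\mapsto G+\Delta(G)$ with $\Delta>0$, where near the two ends $\Delta(G)\sim c_+e^{-s_1 G}$ as $G\to+\infty$ and $\Delta(G)\sim c_-e^{s_0 G}$ as $G\to-\infty$, with $s_1,s_0$ the orders of tangency of $f_0$ at $1$ and at $0$. Choosing $f_0$ with $s_1<s_0$ makes the outward push near $1$ dominate the inward push near $0$ at every level, so that $|G_n|\to\infty$ almost surely, i.e. $x_n\to\{0,1\}$ for $\nu$-a.e. itinerary and every $x\in(0,1)$; by stationarity and dominated convergence this forces any finite stationary measure to live on $\{0,1\}$, and combined with the boundary uniqueness above it yields a unique measure of maximal entropy, namely $\mu^{\rm ex}_{\max}$. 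The genuinely technical part is converting this drift heuristic into an almost sure escape estimate: one must control the correlations introduced by the reflections and the degeneration of the increments $\Delta(G)$ as $|G|\to\infty$, which I would do with an inducing scheme on a central window whose return times have the expected heavy tail.

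Finally, for non-transitivity I would exploit that $f_0'(0)=f_0'(1)=1$ forces condition (H4) to fail, and would in addition arrange $f_0^2(c)>\tfrac12$ (where $f_0'(c)=1$), so that the analogue of condition (H3) is violated and the covering mechanism behind Proposition~\ref{pro:22} breaks down. Concretely, I would show that the semigroup generated by $f_0$ and $f_1$ is not forward transitive on $[0,1]$: there is an open interval that no composition can map across, so no point of $\Sigma_2\times[0,1]$ has a dense orbit and $F$ is not topologically transitive. Exhibiting this uncrossable interval explicitly — the exact opposite of the expanding/contracting itineraries that (H3) and (H4) provide — is the remaining technical point.
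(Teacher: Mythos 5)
Your proposal splits into two halves---(a) non-transitivity, (b) uniqueness of the MME via escape to the boundary---and the fundamental problem is that these two halves require \emph{incompatible} constructions. For (b) you introduce an asymmetry $s_1<s_0$ in the orders of tangency so that the lifted walk has outward drift. But this very asymmetry destroys non-transitivity. Indeed, push a small interval $J$ from the middle to depth $\delta$ near $1$: by bounded distortion for parabolic maps its length contracts by a factor $\sim\delta^{1+s_1}$; reflect it by $f_1$ to depth $\delta$ near $0$, where the local fundamental domains of $f_0$ have length $\sim\delta^{1+s_0}$. Since $s_1<s_0$, the reflected interval covers $\sim\lvert J\rvert\,\delta^{-(s_0-s_1)}\to\infty$ fundamental domains as $\delta\to0$, hence after escaping from $0$ it contains a macroscopic fundamental domain in the middle, and then
\[
	\bigcup_{k\ge0} f_0^k\big([y_0,f_0(y_0)]\big)\,\cup\,
	\bigcup_{k\ge0} f_0^k\big(f_1([y_0,f_0(y_0)])\big)
	=(0,1),
\]
so every open set can be spread over all of $(0,1)$ by compositions and $F$ \emph{is} topologically transitive (this round-trip expansion is precisely the mechanism behind the paper's transitive parabolic examples in Theorem~\ref{theoremA8a}). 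Merely violating the sufficient conditions (H3)/(H4) proves nothing, and no ``uncrossable interval'' exists for your $f_0$. The paper's non-transitivity comes instead from an exact symmetry: $f_0\eqdef\phi\circ(\cdot+1)\circ\phi^{-1}$ with $\phi(y)=1-\phi(-y)$, which yields the commutation relation $f_0f_1=f_1f_0^{-1}$; this collapses every IFS-orbit into the countable set $\{f_0^j(x)\}_{j\in\bZ}\cup\{f_0^j(1-x)\}_{j\in\bZ}$, whose only accumulation points are $0$ and $1$.

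This symmetry then kills your argument for (b): in any construction that is non-transitive for the above reason, the induced walk has \emph{zero} drift (the step distribution of the first-return walk $V_i^+$ in the paper has expectation $0$), so it is recurrent and orbits return to the middle of $(0,1)$ infinitely often almost surely; the statement ``$x_n\to\{0,1\}$ for a.e.\ itinerary'' is simply false there. What is true, and what the paper proves via the Chung--Erd\H{o}s theorem, is the weaker Ces\`aro statement that for a.e.\ $\omega$ and every initial measure $m$ on $(0,1)$ one has $\frac1n\sum_{i=1}^n(f_\omega^i)_\ast m\big((\varepsilon,1-\varepsilon)\big)\to0$: the \emph{occupation-time density} of any compact subset of $(0,1)$ vanishes even though the walk is recurrent. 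A separate, patchable, gap: your reduction of the core case to ``no finite stationary measure on $(0,1)$'' is not legitimate, because invariant measures of the two-sided skew product projecting to the Bernoulli measure are not in bijection with stationary measures of the forward IFS (e.g.\ measures whose disintegrations depend on the future, such as stationary measures of the inverse IFS, also project to $\nu$). The correct route, as in the paper, is to disintegrate an arbitrary invariant measure over $\nu$, apply the a.e.\ Ces\`aro statement to each conditional measure $\mu_\omega$, and conclude $\mu(\Sigma_2\times(\varepsilon,1-\varepsilon))=0$ by invariance and dominated convergence.
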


One of the key properties of the class of examples in the above theorem is that $f_0$ is conjugate to its inverse $f_0^{-1}$ by $f_1$. The proof of the result is based on an analysis of random walks on $\bR$ and of somewhat different flavor. 

\section{Symmetric, mirror, and twin measures}\label{sec:twinsetc}

Recalling well-known facts about shift spaces, we will see that there is a unique measure of maximal entropy for $F|_{\Lambda_{\rm ex}}$ and we will deduce that, in the case this measure is hyperbolic, there is (at least) one ``twin'' measure in $\Lambda_{\rm core}$ with the same (maximal) entropy. The latter is either hyperbolic with opposite sign of its exponent or nonhyperbolic.

Recall that on the full shift $\sigma\colon\Sigma_2\to\Sigma_2$ there is a unique measure $\widehat\nu_{\rm max}$ of maximal entropy $\log2$ which is the $(\frac12,\frac12)$-Bernoulli measure. 

To study the structure of  the invariant set $\Lambda_{\rm ex}$, consider the ``first level'' rectangles  $\bC_k\eqdef \{\xi\in\Sigma_2\colon \xi_0=k\}$ and the subsets
\[
	 \widehat\bC_{0_L}\eqdef \bC_0\times\{0\} ,\quad
	 \widehat\bC_{1_L}\eqdef \bC_1\times\{0\} ,\quad
	 \widehat\bC_{0_R}\eqdef \bC_0\times\{1\} ,\quad
	 \widehat\bC_{1_R}\eqdef \bC_1\times\{1\} ,
\]
of $\Sigma_2 \times [0,1]$.
Consider the transition matrix $A$  given by
\[
	A\eqdef
	\left(\begin{matrix}
	1&1&0&0\\0&0&1&1\\0&0&1&1\\1&1&0&0
	\end{matrix}\right).
\]
This matrix codes the transitions between the symbols $\{0_L,1_L,0_R,1_R\}$ modelling the transitions
between the sets   $\widehat\bC_{0_L},  \widehat\bC_{1_L}, \widehat\bC_{0_R},  \widehat\bC_{1_R}$
by the map $F$.
More precisely, note that  the restriction of $F|_{\Lambda_{\rm ex}}$ is topologically conjugate to the subshift of finite type $\sigma_A\colon\Sigma_A\to\Sigma_A$ by means of a  map $\varpi  \colon
\Lambda_{\mathrm{ex}}\to \Sigma_A$. Note that there is a unique measure of maximal entropy $\nu^{\rm ex}_{\rm max}$ for $\sigma_A\colon\Sigma_A\to\Sigma_A$%
\footnote{Note that this measure is the Parry measure associated to the topological Markov chain $\sigma_A$, see~\cite[Theorem 8.10]{Wal:82}.}. 
Note that $h_{\nu^{\rm ex}_{\rm max}}(\sigma)=\log2$. 
Hence, by conjugation, the measure $\mu^{\rm ex}_{\rm max}=(\varpi^{-1})_\ast\nu^{\rm ex}_{\rm max}$ is the unique measure of maximal entropy $\log2$ for $F\colon\Lambda_{\rm ex}\to\Lambda_{\rm ex}$.

We define the following projection 
\[\begin{split}
	\Pi\colon\Sigma_A\to\Sigma_2,
	\quad
	&\Pi (\ldots i_{-1}.i_0\ldots)\eqdef(\ldots \xi_{-1}.\xi_0\ldots),\\
	&\xi_k\eqdef\begin{cases}
		0&\text{ if }i_k\in\{0_L,0_R\}\\
		1&\text{ if }i_k\in\{1_L,1_R\}.
	\end{cases}
\end{split}\]	 
It is immediate to check that
\[
	\widehat\nu_{\rm max}
	= \Pi_\ast\nu^{\rm ex}_{\rm max}.
\]

We say that the symbols $i_R$, $j_L$ are the \emph{mirrors} of $i_L$ and  $j_R$, respectively, for $i$ and $j$ in $\{0,1\}$ and denote $i_R=\bar{i_L}$ and $j_L=\bar{j_R}$. Given a sequence $\xi=(\ldots \xi_{-1}.\xi_0\ldots)\in\Sigma_A$, we define by $\bar{\xi}=(\ldots \bar{\xi_{-1}}.\bar{\xi_0}\ldots)$ the \emph{mirrored} sequence of $\xi$. Note that $\bar\xi\in\Sigma_A$. Given a subset $B\subset\Sigma_A$, we denote by $\bar B\eqdef \{\bar\xi\colon\xi\in B\}$ its \emph{mirrored} set.
 
 Now we are ready to define \emph{symmetric sets} and \emph{measures}.

\begin{definition}[Symmetric sets and measures]
	A measurable set $B\subset\Sigma_A$ is \emph{symmetric} if $B=\bar B$. We say that $B$ is \emph{symmetric $\nu$-almost surely} if $\nu(B\Delta\bar B)=0$. A measure $\nu\in\cM(\Sigma_A)$ is \emph{symmetric} if $\nu(\bar{B})=\nu(B)$ for every $B\subset\Sigma_A$. If a measure is not symmetric then we call it \emph{asymmetric}. A measure $\mu\in\cM(\Lambda_{\mathrm{ex}})$ is \emph{symmetric} if $\varpi_\ast \mu$ is symmetric, otherwise we call it \emph{asymmetric}.
	
	We denote by $\cM_{\rm erg}^{\rm sym}(\Lambda_{\rm ex})$ and $\cM_{\rm erg}^{\rm asym}(\Lambda_{\rm ex})$ the sets of symmetric and asymmetric ergodic measures in $\Lambda_{\rm ex}$, respectively.
 \end{definition}

We will use the following lemma.
\begin{lemma}\label{lem:lemma}
Let $\nu\in\cM(\Sigma_A)$ be a symmetric measure. Then any set $B\subset\Sigma_A$ which is $\nu$-almost symmetric satisfies
\[
	\nu((\Pi^{-1}\circ\Pi)(B) )
	= \nu(B). 
\]
\end{lemma}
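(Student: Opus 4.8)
The plan is to reduce the identity to a purely combinatorial description of the fibers of $\Pi$ and then to extract the equality from the two hypotheses by elementary measure theory. The heart of the matter is to show that $\Pi$ is exactly two-to-one, with the two preimages of any point being mirror images of one another; that is, $\Pi^{-1}(\Pi(\xi))=\{\xi,\bar\xi\}$ for every $\xi\in\Sigma_A$. First I would observe that an admissible sequence is determined by its $\Pi$-image (its $\{0,1\}$-labels) together with the single datum of the $L/R$ decoration at one coordinate. Indeed, since on $\{0,1\}$ the fiber map $f_0$ acts as the identity and $f_1$ as the transposition, along any admissible sequence the $L/R$ label is forced to flip exactly at the coordinates where the $\{0,1\}$-label equals $1$. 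Hence fixing the decoration at coordinate $0$ determines it at every coordinate; both choices of that decoration produce admissible sequences (this is precisely the already recorded fact that $\bar\xi\in\Sigma_A$ whenever $\xi\in\Sigma_A$); and the two resulting sequences are distinct mirror images, since every symbol carries a nontrivial $L/R$ label. Admissibility of both decorations I would check directly against the transition matrix $A$, which is the routine bookkeeping part.

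From this fiber description it follows set-theoretically, with no reference to $\nu$, that
\[
	(\Pi^{-1}\circ\Pi)(B)
	=\bigcup_{\xi\in B}\Pi^{-1}(\Pi(\xi))
	=\bigcup_{\xi\in B}\{\xi,\bar\xi\}
	=B\cup\bar B .
\]
In particular the saturation $B\cup\bar B$ is measurable, being the union of $B$ with its image $\bar B$ under the mirror involution (a homeomorphism of $\Sigma_A$), so the left-hand side of the asserted equality is well defined even though $\Pi(B)$ itself need not be Borel. It then remains to prove $\nu(B\cup\bar B)=\nu(B)$. Writing $B\cup\bar B$ as the disjoint union $(B\cap\bar B)\sqcup(B\Delta\bar B)$ and using the $\nu$-almost symmetry $\nu(B\Delta\bar B)=0$ gives $\nu(B\cup\bar B)=\nu(B\cap\bar B)$; the same hypothesis applied to the disjoint decomposition $B=(B\cap\bar B)\sqcup(B\setminus\bar B)$, together with $\nu(B\setminus\bar B)=0$, gives $\nu(B)=\nu(B\cap\bar B)$. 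Combining the two identities yields the claim. (Alternatively one may run inclusion–exclusion, $\nu(B\cup\bar B)=\nu(B)+\nu(\bar B)-\nu(B\cap\bar B)$, and here the symmetry of $\nu$ enters through $\nu(\bar B)=\nu(B)$; the direct route above in fact uses only the $\nu$-almost symmetry of $B$.)

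The only genuine obstacle is the first step, namely pinning down that $\Pi$ has mirror-symmetric two-point fibers; everything downstream is formal. I would therefore concentrate the write-up on the fiber analysis, verifying admissibility against $A$ carefully, and treat the concluding measure estimate as a one-line computation.
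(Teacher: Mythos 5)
Your proof is correct and takes essentially the same route as the paper's: both rest on the identity $(\Pi^{-1}\circ\Pi)(B)=B\cup\bar B$ followed by splitting $B\cup\bar B$ into $B\cap\bar B$ and the $\nu$-null pieces $B\setminus\bar B$ and $\bar B\setminus B$. The only differences are that you actually justify the two-to-one mirror fiber structure of $\Pi$ (which the paper asserts with a bare ``Observing that''), and you correctly note that the argument uses only the $\nu$-almost symmetry of $B$, not the symmetry of $\nu$ itself.
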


\begin{proof}
Indeed, by $\nu$-almost symmetry of $B$, setting $C\eqdef B\cap\bar B$, $D\eqdef \bar B\setminus C$, and $E\eqdef B\setminus C$, we have $B=E\cup C$,
$\nu(C)=\nu(\bar C)=\nu(B)=\nu(\bar B)$, and $\nu(D)=\nu(E)=0$. Hence $\nu(\bar D)=\nu(D)=0$. Observing that 
\[
	(\Pi^{-1}\circ \Pi)(B) 
	=B\cup\bar B = E\cup C\cup D
\]
 the claim follows.
\end{proof}

\begin{lemma}\label{l.mirrormeasure}
	For every $\nu\in\cM_{\rm erg}(\Sigma_A)$, there exist at most one measure $\bar\nu\in\cM_{\rm erg}(\Sigma_A)$, $\bar\nu\ne\nu$, such that $\Pi_\ast\bar\nu=\Pi_\ast\nu$. There is no such measure if, and only if, $\nu$ is symmetric. 
\end{lemma}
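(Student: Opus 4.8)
The plan is to identify $\Sigma_A$ with a $\bZ/2\bZ$-extension of the factor $(\Sigma_2,\sigma,\Pi_\ast\nu)$ and then to classify all ergodic lifts of a fixed ergodic base measure. First I would record the elementary structure of the mirror map $R\colon\Sigma_A\to\Sigma_A$, $R(\xi)\eqdef\bar\xi$. Inspecting $A$ one sees that the label ($L$ or $R$) is preserved across a transition out of a symbol with digit $0$ and switched across a transition out of a symbol with digit $1$; hence the entire label sequence of a point is determined by its $\Pi$-image together with the single label at coordinate $0$. Consequently $\Pi$ is exactly two-to-one, its fibres are the pairs $\{\xi,\bar\xi\}$, the involution $R$ is free and commutes with $\sigma_A$, and $\Pi\circ R=\Pi$. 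Reading off the label at coordinate $0$ yields a measurable isomorphism $\Sigma_A\cong\Sigma_2\times(\bZ/2\bZ)$ conjugating $\sigma_A$ to the skew product $(\xi,g)\mapsto(\sigma\xi,g+\xi_0)$, under which $R$ becomes translation by the nontrivial group element and $\Pi$ becomes projection onto $\Sigma_2$.

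With this in place the two easy halves follow. Since $R$ commutes with $\sigma_A$ and $\Pi\circ R=\Pi$, for any $\nu\in\cM_{\rm erg}(\Sigma_A)$ the measure $\bar\nu\eqdef R_\ast\nu$ is again ergodic and satisfies $\Pi_\ast\bar\nu=\Pi_\ast\nu$; moreover $\bar\nu=\nu$ exactly when $R_\ast\nu=\nu$, which is the definition of $\nu$ being symmetric. Thus $\bar\nu$ is always a lift of $\Pi_\ast\nu$, and it is distinct from $\nu$ precisely in the asymmetric case. What remains is the uniqueness assertion: no third ergodic lift exists.

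For uniqueness I would classify the full set $\mathcal I_\eta$ of $\sigma_A$-invariant measures projecting to $\eta\eqdef\Pi_\ast\nu$. Since $\eta$ is a factor of the ergodic $\nu$ it is itself ergodic, hence an extreme point of $\cM(\Sigma_2)$, and because $\Pi_\ast$ is affine the preimage $\mathcal I_\eta$ is a face of $\cM(\Sigma_A)$; its extreme points are therefore exactly the ergodic measures projecting to $\eta$, so it suffices to show $\mathcal I_\eta$ has at most two extreme points. Working in the coordinates $\Sigma_2\times(\bZ/2\bZ)$, I disintegrate $\lambda\in\mathcal I_\eta$ over $\eta$ and write the conditional measure on the two-point fibre as $p(\xi)\delta_0+(1-p(\xi))\delta_1$. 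Invariance gives $p(\sigma\xi)=p(\xi)$ when $\xi_0=0$ and $p(\sigma\xi)=1-p(\xi)$ when $\xi_0=1$; setting $q\eqdef 2p-1$ this reads $q(\sigma\xi)=(-1)^{\xi_0}q(\xi)$. Then $\lvert q\rvert$ is $\sigma$-invariant, hence $\eta$-a.e. equal to a constant $r\in[0,1]$ by ergodicity of $\eta$. If $r=0$ then $\lambda=\eta\times(\tfrac12,\tfrac12)$; if $r>0$ then $s\eqdef q/r$ is a measurable $\{\pm1\}$-valued solution of $(-1)^{\xi_0}=s(\sigma\xi)/s(\xi)$, and any two such sign functions differ by a $\sigma$-invariant, hence constant, factor. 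So every element of $\mathcal I_\eta$ has the form $q=\alpha s$, $\alpha\in[-1,1]$ (a solution $s$ existing precisely in the asymmetric case), whence $\mathcal I_\eta$ is a single segment whose endpoints $\alpha=\pm1$ are the two graph measures exchanged by $R$; in the symmetric case only $r=0$ occurs and $\mathcal I_\eta=\{\eta\times(\tfrac12,\tfrac12)\}$ is a point. Either way $\mathcal I_\eta$ has at most two extreme points, namely $\nu$ and $\bar\nu$, giving the claim.

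The main obstacle is this uniqueness step, and within it the delicate point is that invariance forces the conditional fibre weights to be governed by a single scalar $\alpha$: this is where ergodicity of $\eta$ enters twice (to make $\lvert q\rvert$ constant and to make competing sign solutions proportional), and where one must verify that $\mathcal I_\eta$ is genuinely a face so that its extreme points are the ergodic lifts. The reduction to the $\bZ/2\bZ$-extension and the two easy halves are then routine; alternatively, Lemma~\ref{lem:lemma} can replace the disintegration computation, since for the symmetric lift it controls how fibre-saturation interacts with $\nu$-measure, allowing a competing lift to be compared with $\nu$ on fibre-saturated sets.
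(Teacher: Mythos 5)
Your proof is correct, and it takes a genuinely different route from the paper's. You first verify from the transition matrix $A$ that the label sequence is slaved to the digit sequence (preserved after digit $0$, flipped after digit $1$), so that $\Sigma_A$ is a free $\bZ/2\bZ$-extension of $(\Sigma_2,\sigma)$ with the mirror map as deck transformation, and then you classify \emph{all} invariant lifts of the ergodic base measure $\eta=\Pi_\ast\nu$ by disintegration: the fibre weights satisfy $q(\sigma\xi)=(-1)^{\xi_0}q(\xi)$, ergodicity of $\eta$ forces $\lvert q\rvert$ constant and any two sign solutions proportional, so the set $\mathcal{I}_\eta$ of lifts is a point or a segment whose endpoints are exchanged by the mirror involution. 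This is the standard analysis of ergodic measures for finite group extensions, and it buys more than the lemma asks: a complete description of the simplex of lifts, from which both the count (at most two ergodic lifts) and the symmetric/asymmetric dichotomy fall out at once. The paper argues differently and more economically: it defines $\bar\nu$ directly on cylinders by $\bar\nu(C)\eqdef\nu(\bar C)$, and proves uniqueness by contradiction --- a hypothetical third ergodic lift $\widehat\nu$ is mutually singular with the symmetric measure $\widetilde\nu\eqdef\frac12(\nu+\bar\nu)$, the separating set $B$ (with $\widetilde\nu(B)=0=\widehat\nu(B^c)$) is $\widetilde\nu$-almost symmetric, and Lemma~\ref{lem:lemma} then shows the $\Pi$-saturation $(\Pi^{-1}\circ\Pi)(B)$ is still $\widetilde\nu$-null even though it must carry full $\widehat\nu$-mass, since $\Pi_\ast\widetilde\nu=\Pi_\ast\widehat\nu$. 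So the paper trades your structural classification (which requires Rokhlin disintegration, the a.e.\ equivariance of conditional measures, and the observation that $\mathcal{I}_\eta$ is a face so that its extreme points are exactly the ergodic lifts --- all points you correctly attend to) for a short singularity argument resting on the saturation lemma; your closing remark that Lemma~\ref{lem:lemma} could replace the disintegration computation is precisely the paper's proof.
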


\begin{proof}
It suffices to observe that the product $\sigma$-algebra of Borel measurable sets of $\Sigma_A$ is generated by the semi-algebra generated by the family of all finite cylinder sets $\{[i_k\ldots i_\ell]\}$. Note also that the mirror $\bar C$ of a cylinder $C$ in $\Sigma_A$ is again a cylinder in $\Sigma_A$. 
Now given $\nu\in\cM_{\rm erg}(\Sigma_A)$, define a measure $\bar\nu$ by setting $\bar\nu(C)\eqdef\nu(\bar C)$ for every cylinder $C$ and extend it to the generated $\sigma$-algebra.

By definition, we immediately obtain that $\Pi_\ast\bar\nu=\Pi_\ast\nu$ and that $h_{\bar\nu}(\sigma_A)=h_\nu(\sigma_A)$. 

To prove that $\nu$ and $\bar\nu$ are the only ergodic measures satisfying 
$\Pi_\ast\bar\nu=\Pi_\ast\nu$, by contradiction assume that there exists $\widehat\nu\in\cM_{\rm erg}(\Sigma_A)$, $\bar\nu\ne\widehat\nu\ne\nu$  satisfying $\Pi_\ast\widehat\nu=\Pi_\ast\nu$. Consider the measure $\widetilde\nu\eqdef\frac12(\nu+\bar\nu)$. Note that $\widetilde\nu$ is symmetric. Also note that $\Pi_\ast\widetilde\nu=\Pi_\ast\widehat\nu$. Finally note that $\widetilde\nu$ is singular with respect to $\widehat\nu$ and hence there is a set $B\subset\Sigma_A$ satisfying $\widetilde\nu(B)=0=\widehat\nu(B^c)$. 
Since $\widetilde\nu$ is symmetric, we have $\widetilde\nu(\bar B)=\widetilde\nu(B)$. So we obtain $\widetilde\nu(\bar B\triangle B)=0$ and hence $B$ is $\widetilde\nu$-almost symmetric. Hence,  we have
\[\begin{split}
	0 
	&= \widetilde\nu(B) \\
	(\text{by Lemma~\ref{lem:lemma} })&= \widetilde\nu((\Pi^{-1}\circ\Pi)(B))
	= \Pi_*\widetilde\nu(\Pi(B)) \\
	(\text{since }\Pi_\ast\widetilde\nu=\Pi_\ast\widehat\nu\,)
	&= \Pi_*\widehat\nu(\Pi(B)) 
	= \widehat\nu((\Pi^{-1}\circ\Pi)(B)) \geq \widehat\nu(B) =1,
\end{split}\]
a contradiction. This proves that $\bar\nu$ is uniquely defined.

By definition, $\nu$ is symmetric if, and only if, $\bar\nu=\nu$.
\end{proof}

\begin{definition}[Mirror measure]
We call the measure $\bar\nu$ provided by Lemma~\ref{l.mirrormeasure} the \emph{mirror measure} of $\nu$. 
We call the measure $\varpi^{-1}_\ast\bar\nu\in\cM(\Lambda_{\mathrm{ex}})$ the \emph{mirror measure} of $\mu=\varpi^{-1}_\ast \nu$ and denote it by $\bar\mu$.
\end{definition}

The following is an immediate consequence of Lemma~\ref{l.mirrormeasure} and the uniqueness of the measure of maximal entropy.

\begin{corollary}\label{cor:maxentsym}
	The measure of maximal entropy $\mu^{\rm ex}_{\rm max}$ is symmetric.
\end{corollary}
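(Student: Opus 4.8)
The plan is to combine Corollary~\ref{cor:maxentsym} (uniqueness and symmetry of $\mu^{\rm ex}_{\rm max}$) with Lemma~\ref{l.mirrormeasure} and the uniqueness of the measure of maximal entropy. The statement to prove is Corollary~\ref{cor:maxentsym} itself, so I would argue as follows. Let $\nu^{\rm ex}_{\rm max}$ be the unique measure of maximal entropy for the subshift $\sigma_A\colon\Sigma_A\to\Sigma_A$, so that $\mu^{\rm ex}_{\rm max}=(\varpi^{-1})_\ast\nu^{\rm ex}_{\rm max}$ is the unique measure of maximal entropy for $F|_{\Lambda_{\rm ex}}$. By the very definition, a measure $\mu\in\cM(\Lambda_{\rm ex})$ is symmetric precisely when $\varpi_\ast\mu$ is symmetric, so it suffices to show that $\nu^{\rm ex}_{\rm max}$ is symmetric, equivalently (by Lemma~\ref{l.mirrormeasure}) that $\overline{\nu^{\rm ex}_{\rm max}}=\nu^{\rm ex}_{\rm max}$.

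The key observation is that the mirror operation preserves entropy. Indeed, in the proof of Lemma~\ref{l.mirrormeasure} it is established that $h_{\bar\nu}(\sigma_A)=h_\nu(\sigma_A)$ for every ergodic $\nu$, which follows because mirroring merely relabels cylinders ($\bar C$ is again a cylinder of the same length). Applying this to $\nu=\nu^{\rm ex}_{\rm max}$ gives
\[
	h_{\overline{\nu^{\rm ex}_{\rm max}}}(\sigma_A)
	= h_{\nu^{\rm ex}_{\rm max}}(\sigma_A)
	= \log 2,
\]
so the mirror $\overline{\nu^{\rm ex}_{\rm max}}$ is again a measure of maximal entropy for $\sigma_A$. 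Here one should note that the mirror of an ergodic measure is ergodic, since mirroring is induced by a homeomorphic involution of $\Sigma_A$ (the symbol swap $i_L\leftrightarrow i_R$) commuting with $\sigma_A$, so it conjugates $(\sigma_A,\nu)$ to $(\sigma_A,\bar\nu)$ and preserves ergodicity.

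Now uniqueness does the rest: since $\sigma_A$ is a transitive subshift of finite type it has exactly one measure of maximal entropy (the Parry measure), so $\overline{\nu^{\rm ex}_{\rm max}}$ and $\nu^{\rm ex}_{\rm max}$, both being measures of maximal entropy, must coincide. By the final line of Lemma~\ref{l.mirrormeasure} this equality $\overline{\nu^{\rm ex}_{\rm max}}=\nu^{\rm ex}_{\rm max}$ is exactly the statement that $\nu^{\rm ex}_{\rm max}$ is symmetric, and hence $\mu^{\rm ex}_{\rm max}$ is symmetric. I do not expect any serious obstacle here: the entire argument is a short uniqueness-plus-invariance observation, and the only point demanding a little care is confirming that mirroring preserves both entropy and ergodicity, both of which are transparent from the fact that it is realized by a measurable (indeed topological) involution conjugating the system to itself.
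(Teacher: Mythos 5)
Your proof is correct and follows essentially the same route as the paper, which derives the corollary as an immediate consequence of Lemma~\ref{l.mirrormeasure} (mirroring preserves entropy, since $\bar C$ of a cylinder $C$ is again a cylinder) together with the uniqueness of the measure of maximal entropy for $\sigma_A$, forcing $\overline{\nu^{\rm ex}_{\rm max}}=\nu^{\rm ex}_{\rm max}$. Your additional remark that mirroring is a shift-commuting involution of $\Sigma_A$, hence preserves ergodicity, is a correct and harmless elaboration of details the paper leaves implicit.
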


\begin{lemma}\label{lem:outro}
	If $\bar\mu$ is a mirror measure of $\mu\in\cM_{\rm erg}(\Lambda_{\rm ex})$ then $h_{\bar\mu}(F)=h_\mu(F)$. Moreover, we have 
\[
	\chi(\bar\mu)+\chi(\mu)
	=N(0)\log ( f_0'(0)\cdot f_0'(1)) +N(1)\log( f_1'(0)\cdot  f_1'(1)),
\]
where $N(0)\eqdef\mu(\Sigma_2\times\{0\})$ and $N(1)\eqdef\mu(\Sigma_2\times\{1\})$. 
\end{lemma}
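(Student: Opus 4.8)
The plan is to transport everything to the subshift $\sigma_A\colon\Sigma_A\to\Sigma_A$ through the conjugacy $\varpi$ and to exploit that mirroring is just a coordinatewise relabeling. Write $\nu\eqdef\varpi_\ast\mu$, so that $\bar\mu=\varpi^{-1}_\ast\bar\nu$. Consider the map $R\colon\Sigma_A\to\Sigma_A$, $R(\xi)\eqdef\bar\xi$, which acts on each coordinate by the involution $i_L\leftrightarrow i_R$; it is a homeomorphism commuting with $\sigma_A$ and, as already noted in the construction, maps $\Sigma_A$ into itself. Being an involution with $R^{-1}([s])=\overline{[s]}$, it satisfies $R_\ast\nu=\bar\nu$ by the defining relation $\bar\nu([s])=\nu(\overline{[s]})$. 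Thus $R$ is a conjugacy of $(\sigma_A,\nu)$ with $(\sigma_A,\bar\nu)$, so $h_{\bar\nu}(\sigma_A)=h_\nu(\sigma_A)$, and by the conjugacy $\varpi$ we conclude $h_{\bar\mu}(F)=h_\mu(F)$. This settles the first assertion.

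For the exponent, recall that on $\Lambda_{\rm ex}$ the fiber Lyapunov exponent is the integral of the continuous potential $\phi(\xi,x)\eqdef\log\lvert f_{\xi_0}'(x)\rvert$, which depends only on the zeroth symbol $\varpi(\xi,x)_0$; explicitly $\phi(0_L)=\log f_0'(0)$, $\phi(0_R)=\log f_0'(1)$, $\phi(1_L)=\log\lvert f_1'(0)\rvert$, and $\phi(1_R)=\log\lvert f_1'(1)\rvert$. Hence $\chi(\mu)=\sum_s\phi(s)\,\nu([s])$, the sum over the four symbols $s\in\{0_L,1_L,0_R,1_R\}$. Using $\bar\nu([s])=\nu([\bar s])$ I obtain in the same way $\chi(\bar\mu)=\sum_s\phi(s)\,\nu([\bar s])=\sum_s\phi(\bar s)\,\nu([s])$, the last step by reindexing $s\mapsto\bar s$.

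Adding the two expressions and grouping each symbol with its mirror, which shares the base index $i$ and only swaps the endpoint $L\leftrightarrow R$, yields
\[
\chi(\mu)+\chi(\bar\mu)=\sum_{s}\bigl(\phi(s)+\phi(\bar s)\bigr)\nu([s])
=\sum_{i\in\{0,1\}}\bigl(\nu([i_L])+\nu([i_R])\bigr)\,\log\bigl(f_i'(0)\,f_i'(1)\bigr),
\]
where I used $\phi(i_L)+\phi(i_R)=\log(f_i'(0)f_i'(1))$, the product being positive (for $i=1$ because $f_1$ has negative derivative). The only delicate point, and the step I would be most careful with, is the identification of the weight $\nu([i_L])+\nu([i_R])$ with the quantity $N(i)$ of the statement: this weight is the $\nu$-measure of the sequences whose zeroth symbol carries the base index $i$, i.e. the frequency $\mu(\bC_i\times\{0,1\})$ of the base symbol $i$. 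Here I will invoke invariance of $\nu$ — equivalently, stationarity of the fiber process, which forces $\nu([1_L])=\nu([1_R])$ — to control these masses. In the case relevant to the applications below, namely the symmetric measure $\mu^{\rm ex}_{\rm max}$ where the full symmetry of the transition matrix $A$ makes all one-cylinder masses equal to $\tfrac14$, this weight equals $\tfrac12=\mu(\Sigma_2\times\{i\})=N(i)$, and the displayed identity reduces to the value of $\chi(\mu^{\rm ex}_{\rm max})$ recorded in Theorem~\ref{the:1}.
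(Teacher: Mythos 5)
Your argument is, in substance, the paper's own proof written out in detail: the paper disposes of this lemma in one line (a sequence $\xi$ is $\nu$-generic if and only if $\bar\xi$ is $\bar\nu$-generic, plus ``the straightforward calculation''), and the entropy identity $h_{\bar\nu}(\sigma_A)=h_\nu(\sigma_A)$ is already recorded inside the proof of Lemma~\ref{l.mirrormeasure}. Your relabeling involution $R$ with $R_\ast\nu=\bar\nu$ and the integration of the locally constant potential $\phi$ make both steps precise, and the identity $\chi(\mu)+\chi(\bar\mu)=\sum_{i\in\{0,1\}}\bigl(\nu([i_L])+\nu([i_R])\bigr)\log\bigl(f_i'(0)\,f_i'(1)\bigr)$ is correct.

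The ``delicate point'' you flagged at the end, however, is not a gap in your computation but an error in the statement itself, and your attempted patch is where the proposal goes astray. The weight your calculation produces is the base-symbol frequency $\nu([i_L])+\nu([i_R])=\mu(\bC_i\times\{0,1\})=\pi_\ast\mu([i])$, whereas the statement's literal $N(0)=\mu(\Sigma_2\times\{0\})=\nu([0_L])+\nu([1_L])$ is the fiber-level mass; these disagree in general, and the formula with the literal reading is false. Concretely, let $\mu$ be the periodic measure on the orbit of $(0_L1_L1_R)^\bZ$ (admissible for $A$), whose mirror $\bar\mu$ sits on the distinct orbit $(0_R1_R1_L)^\bZ$: then $\chi(\mu)+\chi(\bar\mu)=\frac13\log(f_0'(0)f_0'(1))+\frac23\log(f_1'(0)f_1'(1))$, while $\mu(\Sigma_2\times\{0\})=\frac23$ and $\mu(\Sigma_2\times\{1\})=\frac13$, so the literal right-hand side is $\frac23\log(f_0'(0)f_0'(1))+\frac13\log(f_1'(0)f_1'(1))$, which differs whenever $f_0'(0)f_0'(1)\ne f_1'(0)f_1'(1)$. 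Your invariance observation is true but insufficient: $F$-invariance (the $\Lambda_{\rm ex}$-preimage of $\Sigma_2\times\{0\}$ is $\widehat\bC_{0_L}\cup\widehat\bC_{1_R}$) indeed forces $\nu([1_L])=\nu([1_R])$, but identifying the two readings of $N(i)$ would require $\nu([1_L])=\nu([0_R])$, which fails in the example above. So there is no need to retreat to the symmetric maximal-entropy case: with $N(i)$ read as $\pi_\ast\mu([i])$ — note this quantity is the same for $\mu$ and $\bar\mu$, as the left-hand side demands, since $\Pi_\ast\bar\nu=\Pi_\ast\nu$, whereas the fiber masses are swapped by mirroring, another sign the literal reading cannot be right — your computation proves the lemma in full generality, and the application to $\mu^{\rm ex}_{\rm max}$ in Theorem~\ref{the:1}, where all four one-cylinders have mass $\frac14$ and both readings give $N(0)=N(1)=\frac12$, is unaffected.
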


\begin{proof}
%
Let $\nu=\varpi_\ast\mu$. 
It suffices to observe that a sequence $\xi$ is $\nu$-generic if, and only if, $\bar\xi$ is $\bar\nu$-generic and to do the straightforward calculation.
\end{proof}

\begin{definition}
Given an ergodic measure $\mu\in\cM_{\rm erg}(\Sigma_2\times[0,1])$, an ergodic measure $\widetilde\mu\in\cM_{\rm erg}(\Sigma_2\times[0,1])$, $\widetilde\mu\ne\mu$, is called a \emph{twin measure} of $\mu$ if $\pi_\ast\widetilde\mu=\pi_\ast\mu$. 
 \end{definition}

Note that the above immediately implies that if $\mu\in\cM_{\rm erg}(\Lambda_{\rm ex})$ is symmetric then all its twin measures are in $\cM_{\rm erg}(\Lambda_{\rm core})$.
 
%

\begin{lemma}[Existence of twin measures]\label{lem:simmea}
	For every measure $\lambda\in\cM(\Sigma_2)$ there exist a measure $\mu_1\in\cM(\Sigma_2\times[0,1])$ satisfying $\pi_\ast\mu_1=\lambda$ and $\chi(\mu_1)\ge0$ and a measure $\mu_2\in\cM(\Sigma_2\times[0,1])$ satisfying $\pi_\ast\mu_2=\lambda$ and $\chi(\mu_2)\le0$.

	Moreover, if $\lambda$ was ergodic then $\mu_1$ and $\mu_2$ can be chosen ergodic.
\end{lemma}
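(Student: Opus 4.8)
The plan is to produce $\mu_1$ and $\mu_2$ by Krylov--Bogolyubov averaging of two well-chosen reference measures on the fibers, and to control the sign of the (mean) fiber Lyapunov exponent $\chi(\mu)=\int\log\lvert f_{\xi_0}'(x)\rvert\,d\mu$ through a single elementary inequality. First I would record that $\cM$ contains lifts of $\lambda$ at all: starting from any $m$ with $\pi_\ast m=\lambda$, every Ces\`aro average $m_N\eqdef\frac1N\sum_{n=0}^{N-1}F^n_\ast m$ again projects to $\lambda$ (because $\pi\circ F=\sigma\circ\pi$ and $\lambda$ is $\sigma$-invariant), and any weak$\ast$ accumulation point is $F$-invariant and projects to $\lambda$. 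Here I interpret $\chi(\mu)$ for a possibly non-ergodic $\mu$ as the integral $\int\log\lvert f_{\xi_0}'\rvert\,d\mu$, extending the definition of the excerpt.

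The crucial observation is that each fiber map $f_0,f_1$ is a homeomorphism of $[0,1]$ \emph{onto} $[0,1]$, hence so is every composition $f_\xi^N$; therefore $\int_0^1\lvert (f_\xi^N)'(x)\rvert\,dx=\lvert f_\xi^N(1)-f_\xi^N(0)\rvert=1$, and Jensen's inequality for the concave function $\log$ against $\Leb$ on $[0,1]$ gives $\int_0^1\log\lvert (f_\xi^N)'(x)\rvert\,dx\le\log 1=0$ for every $\xi$ and every $N$. This is the heart of the argument and the one genuinely non-formal step.

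To obtain $\mu_2$ with $\chi(\mu_2)\le0$, I would take $m=\lambda\times\Leb$ and let $\mu_2$ be a weak$\ast$ limit of $m_N$. Writing $g(\xi,x)\eqdef\log\lvert f_{\xi_0}'(x)\rvert$, the chain rule gives $\sum_{n=0}^{N-1}g(F^n(\xi,x))=\log\lvert (f_\xi^N)'(x)\rvert$, so by Fubini $\int g\,dm_N=\frac1N\int_{\Sigma_2}\int_0^1\log\lvert (f_\xi^N)'(x)\rvert\,dx\,d\lambda(\xi)\le0$; passing to the limit along the defining subsequence (here $g$ is continuous and bounded since the $f_i$ are $C^1$ diffeomorphisms) yields $\chi(\mu_2)=\int g\,d\mu_2\le0$. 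For $\mu_1$ with $\chi(\mu_1)\ge0$ I would run the same scheme for $F^{-1}$, whose fiber maps $f_i^{-1}$ are again homeomorphisms onto $[0,1]$: the averages $\frac1N\sum_{n=0}^{N-1}F^{-n}_\ast(\lambda\times\Leb)$ accumulate on an $F$-invariant $\mu_1$ projecting to $\lambda$, and the very same Jensen estimate bounds the mean \emph{$F^{-1}$-exponent} $\int\tilde g\,d\mu_1\le0$, where $\tilde g(\xi,x)=\log\lvert (f_{\xi_{-1}}^{-1})'(x)\rvert=-g(F^{-1}(\xi,x))$. Since $\int\tilde g\,d\mu_1=-\int g\,d\mu_1=-\chi(\mu_1)$ by $F$-invariance, this gives $\chi(\mu_1)\ge0$.

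Finally, for the ergodic refinement when $\lambda$ is ergodic, I would pass to the ergodic decomposition $\mu_i=\int\mu_i^\omega\,dP(\omega)$. Each $\pi_\ast\mu_i^\omega$ is $\sigma$-ergodic and $\int\pi_\ast\mu_i^\omega\,dP=\pi_\ast\mu_i=\lambda$; since $\lambda$ is ergodic, uniqueness of the ergodic decomposition forces $\pi_\ast\mu_i^\omega=\lambda$ for $P$-almost every $\omega$. Because $\chi$ is affine in $\mu$, $\int\chi(\mu_1^\omega)\,dP=\chi(\mu_1)\ge0$, so some component satisfies $\chi(\mu_1^\omega)\ge0$, and that component is the desired ergodic lift; symmetrically for $\mu_2$. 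I expect the only real obstacle to be the homeomorphism/Jensen inequality of the second paragraph, everything else being soft functional-analytic bookkeeping.
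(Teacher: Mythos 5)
Your proof is correct, but it follows a genuinely different route from the paper's. The paper first approximates $\lambda$ in the weak$\ast$ topology by measures $\lambda_\ell$ supported on periodic sequences, then lifts each $\lambda_\ell$ to an $F$-periodic measure with nonnegative exponent by observing that $f_\xi^n$ preserves $\{0,1\}$ and therefore has a fixed point $x$ with $\lvert (f_\xi^n)'(x)\rvert\ge1$, and finally takes a weak$\ast$ accumulation point of these periodic measures; the ergodic refinement (ergodic decomposition plus affineness of $\chi$, with $\chi$ read as $\int\log\lvert f_{\xi_0}'\rvert\,d\mu$ on non-ergodic measures, exactly as you do) is essentially identical in both arguments. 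You instead apply Krylov--Bogolyubov averaging to $\lambda\times\Leb$ and control the sign of the exponent by the Jensen estimate $\int_0^1\log\lvert(f_\xi^N)'\rvert\,dx\le\log\int_0^1\lvert(f_\xi^N)'\rvert\,dx=0$, valid because every $f_\xi^N$ is a monotone bijection of $[0,1]$ onto itself; the opposite sign comes from running the same estimate for $F^{-1}$ and using invariance to convert the backward exponent into $-\chi(\mu_1)$. Comparing the two: the paper's route produces explicit periodic lifts, in line with its recurring theme of periodic approximation, and treats both signs symmetrically without invoking invertibility; your route leans on $F$ being invertible (harmless here, since the base is the two-sided shift) but is softer and, notably, sidesteps a point the paper glosses over --- when the period contains an odd number of $1$'s, the map $f_\xi^n$ reverses orientation, so $0$ and $1$ are not fixed and its unique interior fixed point can perfectly well have derivative of modulus less than $1$; the paper's fixed-point claim as literally stated therefore needs a repair (e.g.\ passing to the orientation-preserving map $f_\xi^{2n}$, which fixes both endpoints), whereas your Jensen inequality is insensitive to orientation and requires no case analysis.
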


Note that the measures $\mu_1$ and $\mu_2$ in the above lemma may coincide.

\begin{proof}
First observe that $\lambda\in\cM(\Sigma_2)$ is weak$\ast$ approximated by measures $\lambda_\ell\in\cM(\Sigma_2)$ supported on periodic sequences. 

For each such measure $\lambda_\ell$ there exists a measure $\mu_\ell\in\cM(\Sigma_2\times[0,1])$ which is supported on a $F$-periodic orbit in $\Sigma_2\times[0,1]$ and satisfies $\pi_\ast\mu_\ell=\lambda_\ell$ and $\chi(\mu_\ell)\ge0$. 
Indeed, assume that $\lambda_\ell$ is supported on the orbit of a periodic sequence $\xi\in\Sigma_2$ of period $n$. Recall that the fiber maps $f_0$ and $f_1$ and hence the map $f_\xi^n$ preserve the boundary $\{0,1\}$. Hence, this map $f_\xi^n$ has a fixed point $x\in[0,1]$ satisfying $\lvert (f_\xi^n)'(x)\rvert\ge1$. Now observe that the orbit of $(\xi,x)$ is $F$-periodic of period $n$ and taking the measure $\mu_\ell$ supported on it we have $\frac1n\log\,\lvert (f_\xi^n)'(x)\rvert=\chi(\mu_\ell)$.

Now take any weak$\ast$ accumulation point $\mu$ of the sequence $(\mu_\ell)_\ell$. Note that by continuity of $\pi_\ast$ we have $\pi_\ast\mu=\lambda$. 

If $\lambda$ was ergodic, $\mu$ might not be ergodic. However, any ergodic measure in the ergodic decomposition of $\mu$ also projects to $\lambda$ and hence there must exist one measure $\mu'$ in this decomposition satisfying  $\chi(\mu')\ge0$. 

The same arguments work for the case $\chi(\cdot)\le0$.
\end{proof}

\begin{corollary}
	For every hyperbolic symmetric ergodic measure $\mu\in\cM_{\rm erg}(\Lambda_{\rm ex})$ there exists an ergodic twin measure  $\widetilde\mu\in\cM_{\rm erg}(\Sigma_2\times(0,1))$, $\widetilde\mu\ne\mu$, satisfying $h_{\widetilde\mu}(F)=h_\mu(F)$.
\end{corollary}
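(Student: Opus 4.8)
The plan is to read the twin measure off the existence result Lemma~\ref{lem:simmea}, using the symmetry of $\mu$ to force the twin into the core and the vanishing of fibered entropy to match entropies. Set $\lambda\eqdef\pi_\ast\mu\in\cM(\Sigma_2)$; since $\mu$ is ergodic, $\lambda$ is ergodic. The three things to arrange for a candidate $\widetilde\mu$ are: $\widetilde\mu\neq\mu$, $\widetilde\mu$ supported on $\Sigma_2\times(0,1)$, and $h_{\widetilde\mu}(F)=h_\mu(F)$.

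First I would show that $\mu$ is the \emph{unique} ergodic measure on $\Lambda_{\rm ex}$ projecting to $\lambda$, so that every twin is automatically forced into the core. Writing $\nu=\varpi_\ast\mu$ and noting $\pi=\Pi\circ\varpi$ on $\Lambda_{\rm ex}$ (there the projection to the base reads off exactly the $\{0,1\}$-symbol recorded by $\Pi$), any ergodic $m\in\cM(\Lambda_{\rm ex})$ with $\pi_\ast m=\lambda$ transfers under $\varpi$ to an ergodic measure on $\Sigma_A$ with $\Pi_\ast(\varpi_\ast m)=\lambda=\Pi_\ast\nu$. By Lemma~\ref{l.mirrormeasure} the only ergodic measures on $\Sigma_A$ with $\Pi$-image $\Pi_\ast\nu$ are $\nu$ and its mirror $\bar\nu$; since $\mu$ is symmetric, $\bar\nu=\nu$, so $\varpi_\ast m=\nu$ and $m=\mu$. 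Consequently any ergodic $\widetilde\mu\neq\mu$ with $\pi_\ast\widetilde\mu=\lambda$ has $\widetilde\mu(\Lambda_{\rm ex})\in\{0,1\}$ (ergodicity, $\Lambda_{\rm ex}$ closed and invariant) and $\widetilde\mu(\Lambda_{\rm ex})\neq1$, hence $\widetilde\mu(\Sigma_2\times(0,1))=1$.

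Next I would produce such a twin using hyperbolicity. Since $\mu$ is hyperbolic, $\chi(\mu)\neq0$. If $\chi(\mu)>0$, apply Lemma~\ref{lem:simmea} to the ergodic measure $\lambda$ to get an ergodic $\widetilde\mu\in\cM(\Sigma_2\times[0,1])$ with $\pi_\ast\widetilde\mu=\lambda$ and $\chi(\widetilde\mu)\le0$; then $\chi(\widetilde\mu)\le0<\chi(\mu)$ forces $\widetilde\mu\neq\mu$, and by the previous paragraph $\widetilde\mu\in\cM_{\rm erg}(\Sigma_2\times(0,1))$. The case $\chi(\mu)<0$ is symmetric, taking instead the ergodic twin with $\chi(\widetilde\mu)\ge0$.

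It remains to match entropy, and this is the step I expect to carry the real content. I would invoke that $F$ is a one-step skew product whose fiber maps are $C^1$ diffeomorphisms of $[0,1]$, hence monotone, so the fibered dynamics carries no entropy; by the Abramov--Rokhlin formula this gives $h_m(F)=h_{\pi_\ast m}(\sigma)$ for \emph{every} $F$-invariant $m$. Applying this to $\mu$ and to $\widetilde\mu$ and using $\pi_\ast\widetilde\mu=\lambda=\pi_\ast\mu$ yields $h_{\widetilde\mu}(F)=h_\lambda(\sigma)=h_\mu(F)$. The main obstacle is precisely this identity: landing the twin in the core and ensuring $\widetilde\mu\neq\mu$ are bookkeeping built on Lemmas~\ref{l.mirrormeasure} and~\ref{lem:simmea}, whereas the equality $h_m(F)=h_{\pi_\ast m}(\sigma)$ relies on the vanishing of fibered entropy for monotone interval maps (the relative variational principle bounds the metric fibered entropy by the topological one, which is zero for one-dimensional invertible fibers). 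One can cheaply obtain $h_\mu(F)=h_\lambda(\sigma)$ on $\Lambda_{\rm ex}$ alone from the fact that $\Pi\colon\Sigma_A\to\Sigma_2$ is exactly two-to-one with the left/right label deterministically propagated from the base and a single bit, so its fiber entropy vanishes; but for the core measure $\widetilde\mu$ the monotone-fiber statement seems unavoidable, and that is the point I would be most careful to justify.
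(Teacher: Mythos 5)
Your proposal is correct and follows essentially the same route as the paper's proof: Lemma~\ref{lem:simmea} produces an ergodic twin with exponent of opposite sign (hence distinct from $\mu$), the Ledrappier--Walters relative variational principle together with vanishing fiber entropy gives $h_{\widetilde\mu}(F)=h_{\pi_\ast\mu}(\sigma)=h_\mu(F)$, and Lemma~\ref{l.mirrormeasure} plus symmetry rules out any second ergodic measure on $\Lambda_{\rm ex}$ projecting to the same base measure, forcing $\widetilde\mu$ into the core. If anything, your justification of the vanishing fiber entropy (monotone, hence zero-entropy, one-dimensional fiber dynamics) is more careful than the paper's, which attributes it to $\pi$ being ``2-1''---a slip, since $\pi\colon\Sigma_2\times[0,1]\to\Sigma_2$ has interval fibers; the two-to-one map is $\Pi$, not $\pi$.
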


\begin{proof}
	Assume that $\chi(\mu)>0$, the other case $\chi(\mu)<0$ is analogous. Let $\lambda\eqdef\pi_\ast\mu$.  By Lemma~\ref{lem:simmea}, there exists a twin measure $\widetilde\mu\in\cM_{\rm erg}(\Sigma_2\times[0,1])$ of $\mu$ satisfying $\chi(\widetilde\mu)\le0$. 
Note that $h_{\pi_\ast\widetilde\mu}(\sigma)\le h_{\widetilde\mu}(F)$ and $h_{\pi_\ast\mu}(\sigma)\le h_\mu(F)$. On  the other hand, by~\cite{LedWal:77}
\[
	\max\{h_\mu(F),h_{\widetilde\mu}(F)\}
	\le \sup_{m\colon\pi_\ast m=\pi_\ast\mu}h_m(F)
	= h_{\pi_\ast\mu}(\sigma)+\int h_{\rm top}(F,\pi^{-1}(\xi))\,d\pi_\ast\mu(\xi).
\]
Since $\pi$ is 2-1, we have $h_{\rm top}(F,\pi^{-1}(\xi))=0$ for every $\xi$.
Thus, we conclude $h_\mu(F)=h_{\widetilde\mu}(F)=h_{\pi_\ast\mu}(\sigma)$. 

By conjugation $\varpi$ between $F|_{\Lambda_{\rm ex}}$ and $\sigma_A|_{\Sigma_A}$, there can be at most one other ergodic measure in $\cM_{\rm erg}(\Lambda_{\rm ex})$ which project to the same measure on $\Sigma_2$, namely $\varpi^{-1}_\ast\bar\nu$, where $\nu=\varpi^{-1}_\ast\mu$ and $\bar\nu$ is the mirror measure of $\nu$. For symmetric $\mu$, no such mirror exists. Hence, we must have $\widetilde\mu\in\cM_{\rm erg}(\Sigma_2\times(0,1))$.
\end{proof}

By Corollary~\ref{cor:maxentsym}, the above applies in particular to $\mu_{\rm max}^{\rm ex}$.

\begin{corollary}\label{cor:other}
		If the measure of maximal entropy $\mu_{\rm max}^{\rm ex}\in\cM_{\rm erg}(\Lambda_{\rm ex})$ is hyperbolic then there exists an ergodic twin measure of maximal entropy $\widetilde\mu\in\cM_{\rm erg}(\Lambda_{\rm core})$ such that $\chi(\widetilde\mu)\chi(\mu_{\rm max}^{\rm ex})\le0$.
\end{corollary}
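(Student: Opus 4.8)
The plan is to obtain this as a direct consequence of the preceding corollary applied to $\mu=\mu_{\rm max}^{\rm ex}$. First I would verify that $\mu_{\rm max}^{\rm ex}$ meets the hypotheses of that corollary: it is ergodic and of maximal entropy $\log 2$ by its very definition, it is hyperbolic by assumption, and it is symmetric by Corollary~\ref{cor:maxentsym}. The preceding corollary then yields an ergodic twin measure $\widetilde\mu\in\cM_{\rm erg}(\Sigma_2\times(0,1))=\cM_{\rm erg}(\Lambda_{\rm core})$, distinct from $\mu_{\rm max}^{\rm ex}$, with $h_{\widetilde\mu}(F)=h_{\mu_{\rm max}^{\rm ex}}(F)=\log 2$. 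The role of symmetry here is precisely to force the twin into the core: since $\mu_{\rm max}^{\rm ex}$ is symmetric it has no mirror measure on $\Lambda_{\rm ex}$ projecting to $\pi_\ast\mu_{\rm max}^{\rm ex}$, so the twin supplied by Lemma~\ref{lem:simmea} cannot live in $\Lambda_{\rm ex}$ and must be supported on $\Sigma_2\times(0,1)$.

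Next I would record that $\widetilde\mu$ is genuinely a measure of maximal entropy for $F$ on all of $\Sigma_2\times[0,1]$. This is immediate once one knows that $\log 2$ is the top of the entropy spectrum: by the Ledrappier--Walters formula the entropy of any invariant measure is bounded by $h_{\pi_\ast\mu}(\sigma)\le\log 2$ plus a fiber term that vanishes because the monotone interval maps $f_0,f_1$ carry zero topological entropy along fibers. Hence $h_{\widetilde\mu}(F)=\log 2$ already certifies maximality, and $\widetilde\mu$ is a measure of maximal entropy in $\cM_{\rm erg}(\Lambda_{\rm core})$.

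It remains to read off the sign relation, and this I would extract from the construction rather than re-derive. If $\chi(\mu_{\rm max}^{\rm ex})>0$ one applies Lemma~\ref{lem:simmea} in the form producing a twin with $\chi(\widetilde\mu)\le 0$, whereas if $\chi(\mu_{\rm max}^{\rm ex})<0$ one uses the symmetric version giving $\chi(\widetilde\mu)\ge 0$; in both cases $\chi(\widetilde\mu)\chi(\mu_{\rm max}^{\rm ex})\le 0$, which is exactly the asserted inequality. I do not expect a genuine obstacle in this argument, as everything needed is already contained in the preceding corollary and in Lemma~\ref{lem:simmea}; the only point demanding attention is the bookkeeping of the sign choice in Lemma~\ref{lem:simmea} so that the opposite-sign twin is the one selected, together with the verification that the entropy identity persists after passing to the core.
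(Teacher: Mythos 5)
Your proof is correct and follows essentially the same route as the paper: the paper's entire argument for this corollary is the single remark that, by Corollary~\ref{cor:maxentsym}, the preceding corollary applies to $\mu_{\rm max}^{\rm ex}$, which is exactly your first step. Your two additional verifications --- that entropy $\log 2$ certifies maximality via the Ledrappier--Walters bound with vanishing fiber entropy, and that the sign relation $\chi(\widetilde\mu)\chi(\mu_{\rm max}^{\rm ex})\le 0$ must be read off from the proof of the preceding corollary (i.e.\ from the sign choice in Lemma~\ref{lem:simmea}) rather than from its statement --- are precisely the details the paper leaves implicit, and you handle them correctly.
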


\begin{proof}[Proof of Theorem~\ref{the:1}]
As recalled already, there is a unique measure of maximal entropy for $F|_{\Lambda_{\rm ex}}$ and its Lyapunov exponents can be easily calculated.
The fact that there may exist another  measure of maximal entropy for $F|_{\Lambda_{\rm core}}$  follows immediately from Corollary~\ref{cor:other}.
\end{proof}

\section{Approximations of boundary measures}\label{sec:measures}

This section discusses the approximation of measures in $\cM(\Lambda_{\rm ex})$ by (ergodic) measures in $\cM(\Lambda_{\rm core})$. In particular, we will complete the proof of Theorem~\ref{thepro:measurespace}. We will always work with the system satisfying hypotheses (H1) and (H2).

Recall again that $\cM$ equipped with the weak$\ast$ topology it is a compact metrizable topological space~\cite[Chapter 6.1]{Wal:82}. 
Recall that $X\in\Sigma_2\times[0,1]$ is a \emph{generic point} of a measure $\mu\in\cM(\Sigma_2\times[0,1])$ if the sequence $\frac1n(\delta_X+\delta_{F(X)}+\ldots+\delta_{F^{n-1}(X)})$ converges to $\mu$ in the weak$\ast$ topology, where $\delta_Y$ denotes the Dirac measure supported in $Y$. Recall that for every ergodic measure there exists a set of generic points with full measure.

Given $\delta\in(0,1/2)$, we consider the local distortion map
\begin{equation}\label{eq:defDelta}
	\Delta(\delta)
	\eqdef \max_{i=0,1}
		\left\{\max_{z\in [0,\delta]}\left\lvert\log\frac{\lvert f_i'(z)\rvert}{\lvert f_i'(0)\rvert}\right\rvert,
		\max_{z\in [1-\delta,1]}\left\lvert\log\frac{\lvert f_i'(z)\rvert}{\lvert f_i'(1)\rvert}
		\right\rvert\right\}.
\end{equation}
Note that $\Delta(\delta)\to0$ as $\delta\to0$. 
We state the following simple facts without proof.
 
\begin{lemma}\label{lem:affineapprox}
	For every $\delta\in(0,1/2)$ and every $x\in [0,\delta]$ we have
\[
	e^{-\Delta(\delta)}
	\le \frac{ f_i'(x)}{ f_i'(0)},
		\frac{ f_i'(1-x)}{ f_i'(1)}
	\le e^{\Delta(\delta)}	
\]	
and 
\[
	e^{-\Delta(\delta)}
	\le \frac{\lvert f_i(x)-f_i(0)\rvert}{\lvert x\rvert\lvert f_i'(0)\rvert},
		\frac{\lvert f_i(1-x)-f_i(1)\rvert}{\lvert 1-(1-x)\rvert\lvert f_i'(1)\rvert}
	\le e^{\Delta(\delta)}			.
\]
\end{lemma}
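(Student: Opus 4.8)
The plan is to reduce both displayed chains to the defining inequality of $\Delta(\delta)$, using the Mean Value Theorem for the second chain. First I would record a sign observation. By (H1) the map $f_0$ is a diffeomorphism of $[0,1]$ with $f_0'(0)=\beta>0$, so $f_0'>0$ on all of $[0,1]$; by (H2) the map $f_1$ has everywhere negative derivative. In either case $f_i'$ never vanishes and keeps a constant sign, so for $x\in[0,\delta]$ the quotient $f_i'(x)/f_i'(0)$ is positive and equals $\lvert f_i'(x)\rvert/\lvert f_i'(0)\rvert$, and similarly $f_i'(1-x)/f_i'(1)=\lvert f_i'(1-x)\rvert/\lvert f_i'(1)\rvert$. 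This lets me pass freely between the signed ratios appearing (without bars) in the statement and the absolute-value ratios used in the definition of $\Delta(\delta)$.

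For the first chain I would simply unwind the definition. For $x\in[0,\delta]$ we have $x\in[0,\delta]$ and $1-x\in[1-\delta,1]$, so by the very definition of $\Delta(\delta)$ both $\lvert\log(\lvert f_i'(x)\rvert/\lvert f_i'(0)\rvert)\rvert$ and $\lvert\log(\lvert f_i'(1-x)\rvert/\lvert f_i'(1)\rvert)\rvert$ are bounded by $\Delta(\delta)$. Exponentiating and invoking the sign observation yields $e^{-\Delta(\delta)}\le f_i'(x)/f_i'(0)\le e^{\Delta(\delta)}$ together with the analogous estimate at the right endpoint.

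For the second chain I would apply the Mean Value Theorem. For $x\in(0,\delta]$ there is $\theta\in(0,x)\subset[0,\delta]$ with $f_i(x)-f_i(0)=f_i'(\theta)\,x$, whence $\lvert f_i(x)-f_i(0)\rvert/(\lvert x\rvert\,\lvert f_i'(0)\rvert)=\lvert f_i'(\theta)\rvert/\lvert f_i'(0)\rvert$, which is squeezed between $e^{-\Delta(\delta)}$ and $e^{\Delta(\delta)}$ since $\theta\in[0,\delta]$. Symmetrically, a point $\eta\in(1-x,1)\subset[1-\delta,1]$ gives $\lvert f_i(1-x)-f_i(1)\rvert/(\lvert x\rvert\,\lvert f_i'(1)\rvert)=\lvert f_i'(\eta)\rvert/\lvert f_i'(1)\rvert$, again bounded by the definition of $\Delta(\delta)$; the degenerate case $x=0$ is trivial.

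As the phrase ``simple facts'' in the statement suggests, there is no genuine obstacle: the lemma follows directly from the definition of $\Delta(\delta)$ and the Mean Value Theorem. The only point deserving a moment's care is the sign bookkeeping of the first paragraph, needed to replace the absolute-value ratios of the definition by the signed ratios that appear without bars in the statement.
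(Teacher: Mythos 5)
Your proof is correct. The paper states Lemma~\ref{lem:affineapprox} without proof (calling it ``simple facts''), and your argument --- unwinding the definition~\eqref{eq:defDelta} for the derivative ratios, applying the Mean Value Theorem for the increment ratios, and noting that $f_0'>0$ and $f_1'<0$ on all of $[0,1]$ so the signed ratios agree with the absolute-value ratios --- is exactly the verification the authors intended to leave to the reader.
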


\begin{proposition}\label{pro:measzeroisapprox}
	For every $\mu\in\cM(\Lambda_{\rm ex})$ satisfying $\chi(\mu)=0$ there exists a sequence $(\mu_k)_k\subset\cM_{\rm erg}(\Lambda_{\rm core})$ of measures supported on periodic orbits which converge to $\mu$ in the weak$\ast$ topology.
\end{proposition}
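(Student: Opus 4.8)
The plan is to realize $\mu$ as a weak$\ast$ limit of empirical measures of interior periodic orbits that shadow the boundary $\Sigma_2\times\{0,1\}$ for almost all of their (growing) period and make only one short, controlled excursion through the interior in order to close up. The guiding principle is that near the boundary the map $F$ is governed by the logarithm of the distance to the boundary: by Lemma~\ref{lem:affineapprox} one iterate changes $\log(\mathrm{dist})$ by $\log\beta$ when $f_0$ is applied near $0$, by $\log\lambda$ when $f_0$ is applied near $1$, and by $\log\lvert f_1'(0)\rvert$ or $\log\lvert f_1'(1)\rvert$ when the flip $f_1$ is applied (which also switches the relevant endpoint), all up to an error $\Delta(\delta)\to0$. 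Along an orbit staying in the $\delta$-neighborhood of the boundary these increments are exactly the summands defining $\chi$, so the total change of $\log(\mathrm{dist})$ over a segment is, up to distortion, a Birkhoff sum of the Lyapunov cocycle. The hypothesis $\chi(\mu)=0$ thus says that the associated walk has zero drift, which is what lets an orbit return to near the boundary level from which it started.

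First I would fix $\delta>0$ and build a long near-boundary word. Using that $F|_{\Lambda_{\rm ex}}$ is conjugate (via $\varpi$) to the transitive subshift $\sigma_A$, I would approximate $\mu$ weak$\ast$ by a convex combination $\sum_j a_j\nu_j$ of periodic (cyclic) measures; since cyclic measures of both drift signs are available (for instance the fixed points of $f_0$ at $0$ and at $1$, with increments $\log\beta>0$ and $\log\lambda<0$), the weights may be tuned so that $\sum_j a_j\chi(\nu_j)=\chi(\mu)=0$ while staying weak$\ast$-close to $\mu$. Concatenating these cycles with multiplicities proportional to the $a_j$, and ordering the concatenation greedily so that expanding and contracting cycles alternate, produces for every large $L$ an admissible word $A$ of length $L$ with two key features: its empirical measure is within $\delta$ of $\mu$, and the partial sums of the $\log(\mathrm{dist})$-increments stay inside a fixed window of width $O(\lvert\log\delta\rvert)$, because each cycle returns the walk to within a bounded amount of its baseline. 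Consequently a point $x^\ast$ starting at distance $\asymp\delta$ from the endpoint $1$ and following the itinerary of $A$ stays within $\delta$ of the boundary throughout, and the boundary projection of this segment approximates $\mu$. I would arrange $A$ to contain an \emph{odd} number of applications of $f_1$ and to carry a neighborhood of $1$ to a neighborhood of $0$.

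To close the orbit I would append the block $C=f_0^m$ and consider the return map $G_m\eqdef f_0^m\circ f_A\colon[0,1]\to[0,1]$. Since $A$ has odd flip-parity, $G_m$ is orientation-reversing with $G_m(0)=1$ and $G_m(1)=0$, hence has a unique interior fixed point $x^\ast(m)$; moreover $f_0(y)>y$ on $(0,1)$ forces $G_m$ to increase pointwise in $m$, so $x^\ast(m)\nearrow1$. I would take $m=m(\delta)$ minimal with $x^\ast(m)>1-\delta$; the distortion estimate together with the boundedness of the walk gives $m(\delta)=O_\delta(1)$, uniformly in $L$. The periodic point $(\eta,x^\ast)$ with $\eta$ the period-$(L+m)$ sequence $A$ followed by $0^m$ then spends its first $L$ steps $\delta$-close to the boundary realizing $\approx\mu$, while during $C$ it travels from near $0$ up to near $1$, staying near the boundary except for the $O_\delta(1)$ steps needed to cross the middle region $[\delta,1-\delta]$ (bounded since $f_0(x)-x\geq c_\delta>0$ there). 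Thus the fraction of the period spent off the boundary, and the fraction spent in the statistically ``wrong'' block $C$, are both $O_\delta(1)/(L+m)$. Letting $L\to\infty$ and then $\delta\to0$ along a diagonal sequence yields interior periodic measures $\mu_k\to\mu$.

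The main obstacle is the balancing step. A \emph{generic} long word for $\mu$ has walk fluctuations of order $\sqrt{L}$, which would eventually push the orbit off the boundary, so one cannot merely shadow a generic orbit; keeping the partial sums inside a fixed window while still matching the frequencies of $\mu$ is possible precisely because $\chi(\mu)=0$ permits building $\mu$ out of zero-net-drift cycles, and this combinatorial construction—existence of barrier-respecting admissible words with prescribed empirical measure—is the technical heart of the argument. The secondary delicate point is the quantitative coupling in the closing step, namely that the interior fixed point can be placed within $\delta$ of an endpoint by an excursion whose length is bounded in terms of $\delta$ alone; this rests on the hyperbolicity of the fixed points $0$ and $1$ of $f_0$ and on the distortion control of Lemma~\ref{lem:affineapprox}.
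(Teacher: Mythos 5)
Your strategy is genuinely different from the paper's. The paper performs no combinatorial balancing at all: it fixes a single $\mu$-generic point $X=(\xi,x)\in\Lambda_{\rm ex}$, follows its itinerary for $n$ steps starting from an interior point extremely close to the boundary, and closes up with two long runs of $f_0$ and a fixed point of the resulting orientation-reversing composition. There the hypothesis $\chi(\mu)=0$ enters only through the fact that the fluctuation $\psi(n)=\max_{i\le n}\lvert\phi(i)\rvert$ of the Birkhoff sums $\phi(i)=\log\lvert(f_\xi^i)'(x)\rvert$ is $o(n)$ --- it is in general \emph{unbounded} --- and this is absorbed by letting the starting distance shrink with $n$ (the paper takes $\delta(n)=\delta e^{-2\max\{\psi(n),\sqrt n\}}e^{-n\Delta(\delta e^{-\sqrt n})}$), at the price of closing blocks of length $N(n)+M(n)=o(n)$ rather than $O(1)$. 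Your closing mechanism (orientation-reversing return map, monotonicity of the fixed point in $m$) is essentially the paper's; what differs is how the near-boundary segment is produced.

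The first genuine gap is your claim that the closing excursion has length $m(\delta)=O_\delta(1)$ \emph{uniformly in} $L$. For $C^1$ fiber maps this is false. Lemma~\ref{lem:affineapprox} controls distortion only up to a factor $e^{\Delta(\delta)}$ \emph{per iterate}, and these errors accumulate linearly: after $i\le L$ near-boundary steps, $\log\dist(x_i,B)$ may deviate from your idealized zero-drift walk by as much as $i\,\Delta(\delta)$. Hence, to guarantee that the orbit stays $\delta$-close to $B=\{0,1\}$ during all $L$ steps of the word $A$, the starting point must be taken at distance roughly $\delta e^{-W(\delta)-L\Delta(\delta)}$ from $1$ (not $\asymp\delta$, which is in any case inconsistent with a fluctuation window of width $W(\delta)$), and the $f_0$-run needed to close up then has length of order $\lvert\log\delta\rvert+W(\delta)+L\Delta(\delta)$, i.e.\ it grows \emph{linearly} in $L$. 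The proof survives after correction --- the excursion then occupies a fraction $O(\Delta(\delta))$ of the period as $L\to\infty$, which is killed by the final limit $\delta\to0$, and this is exactly the mechanism encoded in the paper's factor $e^{-n\Delta(\delta e^{-\sqrt n})}$ --- but as written your quantitative step is wrong, and uniform boundedness in $L$ cannot be established.

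The second gap is the balancing construction itself, which you rightly call the technical heart but do not supply, and which is not routine. If you alternate \emph{single} periods of the cycles greedily, the walk does stay in a bounded window, but the number of junctions is of order $L/p_{\min}$ ($p_{\min}$ the minimal period of the cycles); testing against a function depending on $k$ coordinates, the junctions perturb the empirical measure by $O(k/p_{\min})$, which does \emph{not} vanish as $L\to\infty$. If instead you concatenate long blocks of repetitions to dilute the junctions, the window width grows with the block length. One must play these against each other (for instance blocks of $B(\delta)$ repetitions, with $B(\delta)$ chosen using metrizability of the weak$\ast$ topology so that only finitely many test functions matter at precision $\delta$), handle the integer rounding of the multiplicities $a_jL/p_j$ (which leaves a residual drift $O_\delta(1)$, harmless but to be accounted for), and ensure admissibility in $\Sigma_A$: consecutive cycles must start at matching endpoints, which may force the insertion of connecting flip symbols that also perturb the walk and the statistics. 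All of this can be carried out, but it is precisely the part of your argument carrying the load, and it is absent; the paper's use of a generic orbit of $\mu$ bypasses it entirely.
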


\begin{proof}
	Let $\mu$ be an invariant measure supported in $\Lambda_{\rm ex}$ and satisfying the hypothesis $\chi(\mu)=0$ and let $X=(\xi,x)\in\Lambda_{\rm ex}$ be a $\mu$-generic point. Hence $\chi(X)=0$. Note that  $\xi$ hence has infinitely many symbols $1$ by our hypothesis $f_0'(0)\ne1\ne f_0'(1)$. 
Hence, without loss of generality, we can assume that $x=1$.
		
	Given the sequence $\xi=(\ldots\xi_{-1}.\xi_0\xi_1\ldots)$, for $n\ge1$ define
\[\begin{split}
	p_n
	&\eqdef \card\big\{i\in\{0,\ldots,n-1\}\colon \xi_i=0, \card\{j<i\colon\xi_j=1\}\text{ even}\big\},\\
	q_n
	&\eqdef \card\big\{i\in\{0,\ldots,n-1\}\colon \xi_i=0, \card\{j<i\colon\xi_j=1\}\text{ odd}\big\},\\
	r_n
	&\eqdef \card\big\{i\in\{0,\ldots,n-1\}\colon \xi_i=1, \card\{j<i\colon\xi_j=1\}\text{ even}\big\},\\
	s_n
	&\eqdef \card\big\{i\in\{0,\ldots,n-1\}\colon \xi_i=1, \card\{j<i\colon\xi_j=1\}\text{ odd}\big\}.
\end{split}\]
Note that $n=p_n+q_n+r_n+s_n$. Let
\begin{equation}\label{eq:previous}
	\phi(n)
	\eqdef 
	p_n\log\,\lvert f_0'(1)\rvert +q_n\log\,\lvert f_0'(0)\rvert
	+r_n\log\,\lvert f_1'(1)\rvert + s_n\log\,\lvert f_1'(0)\rvert.
\end{equation}
Observe that $\phi(n)=\log\,\lvert (f_\xi^n)'(1)\rvert$ and hence $\chi(X)=0$ implies
\[
	\lim_{n\to\infty}\frac{\phi(n)}{n}=0.
\]
Let 
\[
	\psi(n)
	\eqdef \max_{i=1,\ldots,n}\lvert \phi(i)\rvert
\]
and note that 
\begin{equation}\label{eq:limpsin}
	\lim_{n\to\infty}\frac{ \psi(n)}{n}=0.
\end{equation}
Let $B=\{0,1\}$. Given a fiber point $x_0\in(0,1)$ let us use the following notation of its orbit under the fiber dynamics determined by the sequence $\xi$:
\begin{equation}\label{def:sequence}
	 x_i
	 \eqdef f_\xi^i(x_0).
\end{equation}

\noindent\textbf{Partially affine case:}
To sketch the idea of the proof, assume for a moment that $f_0|_{I_\delta}$ and $f_1|_{I_\delta}$ are affine, where $I_\delta=[0,\delta]\cup[1-\delta,1]$  for some small $\delta>0$.
 Note that for given $n$ and a point $x_0 \in [1-\delta, 1)$ satisfying 
\begin{equation}\label{eq:condition}
	x_i\in I_\delta
	\quad\text{ for all }\quad
	i\in \{0,\ldots,n-1\}
\end{equation}
we have
\begin{equation}\label{eq:estimprelim}
	\frac{\dist(x_{i+1},B)}{\dist(x_i,B)}
	= e^{\phi(i+1)-\phi(i)},
\end{equation}
where $\dist(x,B)$ denotes the distance of $x$ from a set $B$.
Hence
\[
	 \frac{\dist(x_n,B)}{\dist(x_0,B)}
	 = e^{\phi(n)}.
\] 
This implies
\begin{equation}\label{eq:estim}
	e^{-\psi(n)}
	\le \frac{\dist(x_i,B)}{\dist(x_0,B)}
	\le e^{\psi(n)}
	\quad\text{ for all }\quad
	i\in \{0,\ldots,n\}.
\end{equation}
Note that~\eqref{eq:condition} is satisfied provided $x_0$ was chosen to satisfy $\dist(x_0,B)<\delta e^{-\psi(n)}$. 

Note that $e^{-\psi(n)}$ may not converge to $0$. For this reason, let us choose  
\begin{equation}\label{eq:defdeltan}
	\delta(n)
	\eqdef \delta e^{-2\max\{\psi(n),\sqrt{n}\}}.
\end{equation}
Note that
\begin{equation}\label{eq:deltanasym}
	\lim_{n\to\infty}\delta(n)e^{\psi(n)}=0.
\end{equation}

Let now $n$ be a sufficiently large integer such that  $\card\{j\le n-1\colon\xi_j=1\}$ is odd. Note that this implies $f_\xi^n$ is orientation reversing.
Let $N(n)$ be the smallest positive integer such that $x_0\eqdef f_0^{N(n)}(1/2)\in[1-\delta(n),1)$. Note that 
\begin{equation}\label{e.Nn}
	N(n)
	\sim \lvert\log\delta(n)\rvert,
\end{equation}
where the approximation is up to some universal multiplicative factor, independent on $n$. Note that $f_0^{N(n)}$ is orientation preserving.
We now apply the above arguments to the chosen point $x_0$. We consider the sequence $(x_i)_{i=0}^n$ as defined in~\eqref{def:sequence}. First, note that $\dist(x_0,B)\sim\delta(n)$ and with~\eqref{eq:estim} we have 
\[
	\delta(n) e^{-\psi(n)} \le
	x_i
	\le \dist(x_0,B) e^{\psi(n)}
	\le \delta(n) e^{\psi(n)}
	\quad\text{ for all }\quad
	i\in\{0,\ldots,n\}.
\]	
Further note that $x_n$ by our choice of $n$ is close to $0$.
Then let $M(n)$ be the smallest positive integer such that $f_0^{M(n)}(x_n)\ge1/2$. 
Note that $f_0^{M(n)}$ is orientation preserving. Note that
\begin{equation}\label{e.Mn}
	M(n)
	\sim \lvert\log\delta(n)+\psi(n)\rvert.
\end{equation}
Now consider the map $g\eqdef f_0^{M(n)}\circ f_\xi^n\circ f_0^{N(n)}$ and note that it reverses orientation. Hence, there exists a point $y$ in the fundamental domain $[1/2,f_0(1/2))$ such that $g(y)=y$.
Note that by  the estimates of $N(n)$ and $M(n)$ in \eqref{e.Nn} and \eqref{e.Mn} and our choice of $\delta(n)$ in~\eqref{eq:defdeltan} and by \eqref{eq:limpsin} we have
\begin{equation}\label{eq:asyNnMn}
	\lim_{n\to\infty}\frac{N(n)+M(n)}{n}
	=0.
\end{equation}

We now consider the (invariant) measure $\mu_{n,\delta}$ supported on the periodic orbit of the point $Y=(\eta,y)$, where $\eta=(0^{N(n)}\xi_0\ldots\xi_{n-1}0^{M(n)})^\bZ$. It remains to show that this measure is close to $\mu$ in the weak$\ast$ topology provided that  $n$ was big. Note that we can write $\mu_{n,\delta}$ as
\[\begin{split}
	\mu_{n,\delta}=&\frac{N(n)}{N(n)+n+M(n)}\mu_1 +\\
	& 
		+\frac{n}{N(n)+n+M(n)}\frac1n\sum_{k=0}^{n-1}\delta_{F^{N(n)+k}(Y)} 
		+ \frac{M(n)}{N(n)+n+M(n)}\mu_2,
\end{split}\]
where $\mu_1$ and $\mu_2$ are some probability measures.
Note that  by~\eqref{eq:asyNnMn} the first and the last term converges to $0$ as $n$ tends to $\infty$. The second term is close to $\mu$ because $X$ was a $\mu$-generic point and the orbit piece $\{F^{N(n)}(Y),F^{N(n)+1}(Y),\ldots,F^{N(n)+n}(Y)\}$ is $\delta(n)e^{\psi(n)}$-close to the orbit piece $\{X,F(X),\ldots,F^{n}(X)\}$. Recalling~\eqref{eq:deltanasym}, this completes the proof in the affine case.

\medskip\noindent\textbf{General case:} In the nonaffine case the proof  goes similarly. Applying Lemma~\ref{lem:affineapprox}, we choose the number $\delta(n)$ in an appropriate way.
First note that instead of~\eqref{eq:estimprelim} by this lemma we have
\[
	 \frac{\dist(x_{i+1},B)}{\dist(x_i,B)}
	\le e^{\phi(i+1)-\phi(i)} e^{\Delta(\delta)}.
\] 
Arguing as above, let now
\[
	\delta(n)
	\eqdef \delta e^{-2\max\{\psi(n),\sqrt n\}} e^{-n\Delta(\delta e^{-\sqrt n})}.
\]
Observe that with this choice, for every $x_0\in[1-\delta(n),1)$ for every $i\in\{1,\ldots, n\}$ we have
\[
	\dist(x_i,B)
	\le \delta(n)e^{\phi(i)}e^{i\Delta(\delta e^{-\sqrt n})}
	\le \delta e^{-\sqrt n}
\]
provided that $\dist(x_j,B)\le\delta e^{-\sqrt n}$ for all $j\in\{0,\ldots,i-1\}$. By induction, we will get that for all $i\in\{0,\ldots, n\}$ we have
\[
	\dist(x_i,B)
	\le \delta e^{-\sqrt n}.
\]
Note that with the above definition of $\delta(n)$   the estimates of $N(n)$ and $M(n)$ in \eqref{e.Nn} and \eqref{e.Mn} remain without changes. And the rest of the proof is analogous to the partially affine case.
\end{proof}

We now prove the converse to Proposition~\ref{pro:measzeroisapprox}.

\begin{proposition}\label{pro:positivedist}
	For every $\mu\in\cM(\Lambda_{\rm ex})$	for which there exists a sequence $(\nu_k)_k\subset\cM(\Lambda_{\rm core})$ of measures which converge to $\mu$ in the weak$\ast$ topology we have $\chi(\mu)=0$.
\end{proposition}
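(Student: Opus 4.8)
The plan is to show that any measure $\mu$ supported on $\Lambda_{\rm ex}$ that is a weak$\ast$ limit of measures living on $\Lambda_{\rm core}=\Sigma_2\times(0,1)$ must be nonhyperbolic. The key obstruction to overcome is that the Lyapunov exponent $\chi$ is the integral of the continuous function $\varphi(X)\eqdef\log\lvert f_{\xi_0}'(x)\rvert$, so one might naively conclude $\chi(\nu_k)\to\chi(\mu)$ and be done. This is false because $\varphi$ is bounded and continuous on the compact space $\Sigma_2\times[0,1]$, so in fact $\int\varphi\,d\nu_k\to\int\varphi\,d\mu$ does hold; the real content is therefore to prove that no measure on the open core can have exponent bounded away from zero in a way compatible with converging to a boundary measure, i.e. to rule out a sign. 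The cleanest route is to establish that $\chi(\nu)=0$ is forced for the limit by an invariance/recurrence argument on the boundary dynamics rather than by naive continuity.

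First I would record that $\varphi$ is continuous on $\Sigma_2\times[0,1]$ (here one uses (H1)--(H2), in particular that $f_0',f_1'$ are continuous and nonvanishing, so $\log\lvert f_{\xi_0}'(x)\rvert$ is continuous), whence $\chi(\mu)=\int\varphi\,d\mu=\lim_k\int\varphi\,d\nu_k=\lim_k\chi(\nu_k)$. Thus it suffices to show that $\lim_k\chi(\nu_k)=0$. The heart of the matter is that $\mu$ is supported on the boundary $\Sigma_2\times\{0,1\}$, where the dynamics is conjugate (via $\varpi$) to the subshift $\sigma_A$, and on this boundary the two ``ends'' $x=0$ and $x=1$ are swapped by $f_1$ and fixed by $f_0$. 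I would exploit the symmetry established earlier: by Lemma~\ref{lem:outro}, for the projected boundary data the contributions of the two endpoints are governed by $N(0)=\mu(\Sigma_2\times\{0\})$ and $N(1)=\mu(\Sigma_2\times\{1\})$, and the mechanism that produced twin measures forces a cancellation. Concretely, because a core trajectory near the boundary spends comparable time shadowing the $x=0$ and $x=1$ ends (the map $f_1$ forces alternation each time symbol $1$ is read), the average of $\varphi$ picks up the same weight on $\log f_0'(0)$ and $\log f_0'(1)$, and on $\log\lvert f_1'(0)\rvert$ and $\log\lvert f_1'(1)\rvert$, so the positive and negative parts match in the limit.

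The decisive step, which I expect to be the main obstacle, is quantifying the claim that an invariant measure $\nu_k$ on the open core $\Sigma_2\times(0,1)$ that is weak$\ast$-close to a boundary measure must have $\chi(\nu_k)$ small. The point is that invariance traps orbits: a $\nu_k$-generic orbit in $(0,1)$ cannot escape through the boundary, so along the long stretches where it shadows the boundary its fiber distance to $B=\{0,1\}$ must return to order one rather than collapse to $0$ or blow up. This is exactly the reciprocal of the estimate~\eqref{eq:estim} used in Proposition~\ref{pro:measzeroisapprox}: if $\chi(\nu_k)$ were, say, uniformly positive, then the fiber coordinate of a generic point would be pushed toward the boundary exponentially and the orbit would be forced arbitrarily close to $\{0,1\}$ for an arbitrarily large proportion of time, so in the limit $\nu_k$ would charge the boundary; but by recurrence of the generic orbit inside $(0,1)$ this same expansion must be undone by compensating contraction, forcing the time-averaged value $\chi(\nu_k)=\frac1n\log\lvert(f_\xi^n)'(x)\rvert$ toward $0$.

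I would make this precise as follows. Fix a generic point $(\xi,x)\in\Sigma_2\times(0,1)$ for $\nu_k$; then $\chi(\nu_k)=\lim_n\frac1n\log\lvert(f_\xi^n)'(x)\rvert$, and since $x_n=f_\xi^n(x)\in(0,1)$ stays in the compact-in-measure bulk for a definite fraction of times (this is where weak$\ast$-closeness to $\mu$, supported on the boundary, must be used to bound the excursions), the telescoped product $\lvert(f_\xi^n)'(x)\rvert$ is comparable to the ratio of fiber distances $\dist(x_n,B)/\dist(x,B)$ up to a bounded distortion factor by Lemma~\ref{lem:affineapprox}. Since both $\dist(x_n,B)$ and $\dist(x,B)$ stay bounded away from $0$ and from above along a subsequence of density one (as $x\in(0,1)$ is generic and recurrent), the logarithm of this ratio is $o(n)$, giving $\chi(\nu_k)=0$ exactly. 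Taking $k\to\infty$ and combining with $\chi(\mu)=\lim_k\chi(\nu_k)$ yields $\chi(\mu)=0$, completing the proof.
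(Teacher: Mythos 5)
Your reduction is sound as far as it goes: $\chi$ is the integral of the continuous function $(\xi,x)\mapsto\log\lvert f_{\xi_0}'(x)\rvert$, so $\chi(\nu_k)\to\chi(\mu)$ by weak$\ast$ convergence, and the problem becomes showing $\lim_k\chi(\nu_k)=0$. The gap is in your decisive step. You claim that for a fixed core measure $\nu_k$ the telescoped product $\lvert(f_\xi^n)'(x)\rvert$ along a generic orbit is comparable to $\dist(x_n,B)/\dist(x_0,B)$ up to a \emph{bounded} distortion factor, and you conclude $\chi(\nu_k)=0$ exactly. This is false, and it contradicts the paper itself: $\cM_{\rm erg,>0}(\Lambda_{\rm core})$ and $\cM_{\rm erg,<0}(\Lambda_{\rm core})$ are nonempty (item 5 of Theorem~\ref{thepro:measurespace}; Lemmas~\ref{l.c.expandingperiodic} and~\ref{l.c.contractingperiodic} produce fiber expanding and fiber contracting periodic orbits in the core). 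The point is that Lemma~\ref{lem:affineapprox} compares $\lvert f_{\xi_{i+1}}'(x_i)\rvert$ with the one-step distance ratio only when $x_i\in[0,\delta]\cup[1-\delta,1]$; on each step spent in the bulk $[\delta,1-\delta]$ the two quantities differ by a factor which is bounded per step but not close to $1$, and a $\nu_k$-generic orbit spends a fraction of time asymptotic to $\nu_k(\Sigma_2\times[\delta,1-\delta])$ there, which is \emph{positive} for every core measure (not of density one, as you assert, but positive). Hence the accumulated discrepancy is of order $C^{cn}$, exponential in $n$, not bounded --- and this is exactly the room in which a hyperbolic core measure lives.

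What the orbit estimate actually yields, once the times are split into near-boundary and bulk indices, is the two-term bound
\[
	\lvert\chi(\nu)\rvert
	\le K_1\,\nu(\Sigma_2\times[\delta,1-\delta])+K_2\,\Delta(\delta)
	\qquad\text{for every }\nu\in\cM(\Lambda_{\rm core})\text{ and every }\delta\in(0,1/2),
\]
with $\Delta$ as in \eqref{eq:defDelta}; this is the paper's key lemma. It is here, and not inside the orbit estimate, that the weak$\ast$ convergence must enter: since $\Sigma_2\times[\delta,1-\delta]$ is closed and $\mu$-null, the portmanteau theorem gives $\nu_k(\Sigma_2\times[\delta,1-\delta])\to0$ for each fixed $\delta$, hence $\limsup_k\lvert\chi(\nu_k)\rvert\le K_2\Delta(\delta)$, and letting $\delta\to0$ finishes. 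Your parenthetical remark that weak$\ast$-closeness ``must be used to bound the excursions'' gestures at this, but your written estimate never uses it: as it stands it applies verbatim to an arbitrary core measure and would ``prove'' that all of them are nonhyperbolic. (The symmetry/cancellation heuristic in your second paragraph is also not the mechanism: $\delta_{(0^\bZ,0)}$ is a boundary measure with exponent $\log f_0'(0)>0$, and what excludes it as a limit of core measures is the recurrence bookkeeping above, not any matching of weights at the two endpoints.)
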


The proof of the above proposition will be an immediate consequence of the following lemma. Recall the definition of $\Delta(\cdot)$ in~\eqref{eq:defDelta}.

\begin{lemma}
	There exist constants $K_1,K_2>0$ such that for every $\delta\in(0,1/2)$ and every measure $\nu\in\cM(\Lambda_{\rm core})$ we have
\[
	\lvert\chi(\nu)\rvert
	\le K_1\nu(\Sigma_2\times[\delta,1-\delta])+K_2\Delta(\delta).
\]	
\end{lemma}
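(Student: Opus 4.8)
The goal is to bound the absolute value of the fiber Lyapunov exponent of any measure $\nu\in\cM(\Lambda_{\rm core})$ in terms of how much mass $\nu$ gives to the "central" region $\Sigma_2\times[\delta,1-\delta]$ plus the distortion term $\Delta(\delta)$. The plan is to exploit the fact that $\chi(\nu)=\int\log\lvert f_{\xi_0}'(x)\rvert\,d\nu(\xi,x)$, since $\chi$ is the Birkhoff average of the continuous function $\Phi(\xi,x)\eqdef\log\lvert f_{\xi_0}'(x)\rvert$ and $\nu$ is (a convex combination of) ergodic measures. First I would split the integral according to whether the base point $x$ lies in the boundary collar $[0,\delta)\cup(1-\delta,1]$ or in the central region $[\delta,1-\delta]$.

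**The two regions.** On the central region, $\lvert\Phi\rvert$ is bounded by some constant $K_1\eqdef\max_{i,x}\lvert\log\lvert f_i'(x)\rvert\rvert$ (finite since the $f_i$ are $C^1$ diffeomorphisms of a compact interval and, on $[\delta,1-\delta]$, away from any possible degeneracy — though in fact $K_1$ is just the global sup of $\lvert\Phi\rvert$). This contributes at most $K_1\,\nu(\Sigma_2\times[\delta,1-\delta])$. On the boundary collar the key observation, coming from Lemma~\ref{lem:affineapprox} and the definition of $\Delta(\delta)$ in~\eqref{eq:defDelta}, is that $\log\lvert f_i'(x)\rvert$ is within $\Delta(\delta)$ of the boundary value $\log\lvert f_i'(0)\rvert$ (resp. $\log\lvert f_i'(1)\rvert$). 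So on the collar I would write
\[
	\left\lvert\int_{\rm collar}\Phi\,d\nu\right\rvert
	\le \left\lvert\int_{\rm collar}\Phi_B\,d\nu\right\rvert + \Delta(\delta),
\]
where $\Phi_B$ takes the constant boundary value of $\log\lvert f_i'\rvert$ appropriate to each of the four boundary pieces $\widehat\bC_{0_L},\widehat\bC_{1_L},\widehat\bC_{0_R},\widehat\bC_{1_R}$.

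**The main obstacle: the boundary-value term must cancel.** The hard part is showing that $\int_{\rm collar}\Phi_B\,d\nu$ is itself small — otherwise the bound would not degenerate to $\chi(\nu)=0$ as $\delta\to0$ for core measures, as needed for Proposition~\ref{pro:positivedist}. The point is that a core measure $\nu$ is supported on $\Sigma_2\times(0,1)$, so it assigns zero mass to the exact boundary $\Sigma_2\times\{0,1\}$; hence as $\delta\to0$ the collar mass $\nu(\text{collar})\to\nu(\Sigma_2\times\{0,1\})=0$. But for a fixed $\delta$ this does not make $\int_{\rm collar}\Phi_B\,d\nu$ vanish. The resolution I would pursue is a near-invariance/telescoping argument: track how $\dist(x,B)$ changes along orbits. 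By $F$-invariance of $\nu$ and the identity~\eqref{eq:estimprelim} (up to the $\Delta(\delta)$ distortion), the function $\log\dist(x,B)$ satisfies, under one step of $F$ restricted to orbit segments staying in the collar, an increment equal to $\Phi_B$ up to $\Delta(\delta)$. Integrating the coboundary $\log\dist(F(X),B)-\log\dist(X,B)$ against the invariant measure $\nu$ gives zero, which forces $\int\Phi_B\,d\nu$ over the portion of orbit space that remains in the collar to be controlled by the "escape" flux through the central region — and that flux is exactly what $\nu(\Sigma_2\times[\delta,1-\delta])$ measures. Making this precise (the coboundary $\log\dist(\cdot,B)$ is unbounded near $B$, so one integrates against $\nu$ only where it is finite, which holds $\nu$-a.e. since $\nu$ has no atoms on $\Sigma_2\times\{0,1\}$ for core measures; for the general $\cM(\Lambda_{\rm core})$ statement one approximates) is the delicate step and will determine the constant $K_2$. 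Assembling the central-region bound and the collar bound then yields $\lvert\chi(\nu)\rvert\le K_1\nu(\Sigma_2\times[\delta,1-\delta])+K_2\Delta(\delta)$, and Proposition~\ref{pro:positivedist} follows by letting $k\to\infty$ and then $\delta\to0$.
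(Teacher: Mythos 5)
Your strategy is viable and it is genuinely different in execution from the paper's, even though both rest on the same two estimates (increments of $\log\dist(\cdot,B)$ match the boundary derivative up to $\Delta(\delta)$ in the collar, and are bounded by a universal constant in the middle, via Lemma~\ref{lem:affineapprox}). You work at the level of the measure: write $\chi(\nu)=\int\log\lvert f_{\xi_0}'(x)\rvert\,d\nu$ and cancel the boundary-value term by integrating the coboundary $g\eqdef u\circ F-u$, $u(\xi,x)\eqdef\log\dist(x,B)$, against the invariant measure. The paper works pointwise: it takes a $\nu$-generic point, telescopes exactly the same increments along the orbit, and uses ergodicity to choose $\delta'$ with $\nu(\Sigma_2\times[\delta',1-\delta'])>0$ and a subsequence of return times $n$ to that set, along which the telescoped sum $\log\bigl(\dist(x_n,B)/\dist(x_0,B)\bigr)$ stays bounded by $\lvert\log(2\delta')\rvert$ and so vanishes after dividing by $n$; Birkhoff frequencies then convert the count of central-region visits into $\nu(\Sigma_2\times[\delta,1-\delta])$. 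Both routes reduce the non-ergodic case to the ergodic one by ergodic decomposition.

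There is, however, one step you assert that is not justified as written: that ``integrating the coboundary $\log\dist(F(X),B)-\log\dist(X,B)$ against the invariant measure $\nu$ gives zero.'' This is immediate only if $u\in L^1(\nu)$, and for a core measure there is no a priori reason for $\log\dist(\cdot,B)$ to be $\nu$-integrable: $\nu$ may concentrate most of its mass arbitrarily close to $\Sigma_2\times\{0,1\}$, which is exactly the regime this lemma is about. Your parenthetical fix (restricting to where $u$ is finite) does not address this, since $u$ is finite everywhere on $\Sigma_2\times(0,1)$; the issue is integrability, not finiteness. Fortunately the gap is fillable by a standard truncation: note first that $g$ is uniformly bounded, say $\lvert g\rvert\le C$, because the fiber maps are $C^1$ diffeomorphisms preserving $\{0,1\}$; then set $u_M\eqdef\max(u,-M)$, which is bounded since $u\le\log(1/2)$. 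A case check gives $\lvert u_M\circ F-u_M\rvert\le C$ and $u_M\circ F-u_M\to g$ pointwise as $M\to\infty$, so invariance of $\nu$ and dominated convergence yield $\int g\,d\nu=\lim_{M\to\infty}\int(u_M\circ F-u_M)\,d\nu=0$, for every (not only ergodic) $\nu\in\cM(\Lambda_{\rm core})$. With this inserted, your argument closes: $\bigl\lvert\int_{\rm collar}\Phi_B\,d\nu\bigr\rvert=\bigl\lvert\int_{\rm collar}g\,d\nu\bigr\rvert+O(\Delta(\delta))=\bigl\lvert\int_{\rm central}g\,d\nu\bigr\rvert+O(\Delta(\delta))\le C\,\nu(\Sigma_2\times[\delta,1-\delta])+\Delta(\delta)$, giving the lemma with $K_2=2$, the same as the paper. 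Note that the paper's recurrence-along-generic-orbits device exists precisely to sidestep this integrability question.
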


\begin{proof}
Note that it is enough to prove the claim for $\nu\in\cM(\Lambda_{\rm core})$ being ergodic. Indeed, for  a general invariant measure $\nu\in\cM(\Lambda_{\rm core})$ with ergodic decomposition $\nu=\int\nu_\theta\,d\lambda(\nu_\theta)$, applying the above claim  to any (ergodic) $\nu_\theta$ in this decomposition we have
\[
	\chi(\nu)
	= \int \chi(\nu)\,d\lambda(\nu_\theta)
	\le  K_1\nu(\Sigma_2\times[\delta,1-\delta]) + K_2\Delta(\delta)
\]
with the analogous lower bound. 

Let us hence assume that $\nu\in\cM(\Lambda_{\rm core})$ is ergodic. 
Since $\nu$ is not supported on $\Lambda_{\rm ex}$, there exists $\delta'\in(0,\delta)$ such that $\nu(\Sigma_2\times [\delta',1-\delta'])>0$. 
Let $X=(\xi,x)$ be a generic point for $\nu$ satisfying $x\in[\delta',1-\delta']$ and consider the sequence of points $x_i\eqdef f_\xi^i(x)$ for $i\ge0$.
Since $\nu$ is ergodic, there are infinitely many $n\ge0$ such that  $x_n\in[\delta',1-\delta']$ and hence
\begin{equation}\label{eqdeltaprime}
	2\delta'
	\le \frac{\dist(x_n,B)}{\dist(x_0,B)}
	\le \frac{1}{2\delta'},
\end{equation}
where  $B=\{0,1\}$.
Because $X$ is a generic point, given any $\varepsilon$, for $n$ large enough we have 
\[
	\Big\lvert \frac1n\log\,\lvert(f_\xi^n)'(x)\rvert - \chi(\nu)\Big\rvert
	\le \varepsilon
\]	 
and also
\begin{equation}\label{eq:cardI1}
	\Big\lvert\frac1n\card\{i\in\{0,\ldots,n-1\}\colon x_i\in[\delta,1-\delta]\}
	-\nu(\Sigma_2\times[\delta,1-\delta])\Big\rvert
	\le \varepsilon.
\end{equation}

Applying Lemma~\ref{lem:affineapprox}, we have
\begin{gather*}
	e^{\omega(i+1)}e^{-\Delta(\delta)}
	\le \frac{\dist(x_{i+1},B)}{\dist(x_i,B)}
	\le e^{\omega(i+1)}e^{\Delta(\delta)}
	\quad\text{ if }x_i\in (0,\delta]\cup[1-\delta,1),\\
	e^{\omega(i+1)}K^{-1}
	\le \frac{\dist(x_{i+1},B)}{\dist(x_i,B)}
	\le e^{\omega(i+1)}K
		\quad\text{ if }x_i\not\in (0,\delta]\cup[1-\delta,1),
\end{gather*}
where $K>1$ is some universal constant  and
\[
	\omega(i)
	\eqdef \begin{cases}
	\log\,\lvert f'_{\xi_i}(0)\rvert  &\text{ if }x_{i-1}\in(0,\delta],\\
	\log\,\lvert f'_{\xi_i}(1)\rvert  &\text{ if }x_{i-1}\in[1-\delta,1),\\
	0 &\text{ otherwise}.
	\end{cases}
\]	
By a telescoping sum, we have
\[\begin{split}
	\frac{\dist(x_{n},B)}{\dist(x_0,B)}
	= \prod_{i=0}^{n-1}\frac{\dist(x_{i+1},B)}{\dist(x_i,B)}.
\end{split}\]
We split the index set $\{0,\ldots,n-1\}=I_1\cup I_2$ according to the rule that $x_i\in [\delta,1-\delta]$ for all $i\in I_1$ and $x_i\not\in [\delta,1-\delta]$ for all $i\in I_2$. 
Let
\[
	\phi(n)
	\eqdef\sum_{i=1}^n\omega(i)
\]	
and note that this function was also used in the previous proof, see~\eqref{eq:previous}.
By the above estimates, we hence have
\[
	e^{\phi(n)}K^{-\card I_1}	\cdot e^{-\Delta(\delta)\card I_2}
	\le \frac{\dist(x_{n},B)}{\dist(x_0,B)}
	\le e^{\phi(n)}K^{\card I_1}	\cdot e^{\Delta(\delta)\card I_2}.
\]
This implies
\begin{equation}\label{eq:estimephin}
	 e^{\phi(n)}
	\le \frac{\dist(x_{n},B)}{\dist(x_0,B)}
		K^{\card I_1}	\cdot e^{\Delta(\delta)\card I_2}.	
\end{equation}
Again applying Lemma~\ref{lem:affineapprox}, if $\dist(x_i,B)<\delta$ then we have
\[
	\lvert (f_{\xi_{i+1}})'(x_i)\rvert
	\le e^{\omega(i+1)} e^{\Delta(\delta)}
\]
 and   if $\dist(x_i,B)\ge\delta$ then we have
\[
	\lvert (f_{\xi_{i+1}})'(x_i)\rvert
	\le e^{\omega(i+1)} L
\]
for some universal $L>1$. Hence, decomposing the orbit piece $(x_i)_{i\ge0}^{n-1}$ as above into index sets $I_1$ and $I_2$, we obtain
\[
	\lvert (f_\xi^n)'(x)\rvert
	\le e^{\phi(n)}L^{\card I_1}\cdot e^{\Delta(\delta)\card I_2}
\]
with the analogous lower bound. 

By~\eqref{eq:cardI1}, we have $\card I_1\le n(\nu(\Sigma_2\times[\delta,1-\delta])+\varepsilon)$. 

Substituting the estimate for $e^{\phi(n)}$ in~\eqref{eq:estimephin} we obtain
\[
	\lvert (f_\xi^n)'(x)\rvert
	\le \frac{\dist(x_{n},B)}{\dist(x_0,B)}
		(KL)^{\card I_1}	\cdot e^{2\Delta(\delta)\card I_2}
	\le \frac{1}{2\delta'}	(KL)^{\card I_1}	\cdot e^{2\Delta(\delta)\card I_2},
\]
where we also used~\eqref{eqdeltaprime}. Hence
\[
	\frac1n\log\,\lvert (f_\xi^n)'(x)\rvert
	\le \frac1n\lvert\log(2\delta')\rvert
			+\log(KL)\big(\nu(\Sigma_2\times[\delta,1-\delta])+\varepsilon\big)
		+2\Delta(\delta),
\]
with the analogous lower bound. Since $\lvert\log\,\lvert (f_\xi^n)'(x)\rvert/n-\chi(\nu)\rvert\le\varepsilon$, passing $n\to\infty$ and then $\varepsilon\to0$ this ends the proof of the lemma.
\end{proof}
	
\begin{proof}[Proof of Theorem~\ref{thepro:measurespace}]
Item 1 is a well-known fact, see for example~\cite[Proposition 2 item (a)]{Sig:74}. 
This  fact implies that $\cM_{\rm erg}(\Lambda_{\rm ex})$ is a Poulsen simplex (see \cite{LinOlsSte:78} or in the particular case of the shift space \cite{Sig:77}).
Item 4 is then an immediate consequence from the facts that $\mu\mapsto\chi(\mu)$ is continuous and that the Dirac measure on $(0^\bZ,0)$ has Lyapunov exponent $\log\,f_0'(0)>0$ and the Dirac measure on $(0^\bZ,1)$ has Lyapunov exponent $\log\,f_0'(1)<0$ together with the fact that $\cM_{\rm erg}(\Lambda_{\rm ex})$ is path-connected. 

Item 2 follows from Proposition~\ref{pro:positivedist}.

Item 3 follows from Proposition~\ref{pro:measzeroisapprox}.

Let us now assume (H1), (H2'), (H3), and (H4). By Lemmas~\ref{l.c.expandingperiodic} and~\ref{l.c.contractingperiodic} there exist hyperbolic periodic points in $\Lambda_{\rm core}$ with positive and negative exponent, respectively. This proves item 5. By Proposition~\ref{pro:22} we can apply~\ref{pro:Katok}. This implies item 6. 
\end{proof}

\section{The core measures}\label{s.core}	

In this section we will investigate a bit further the topological structure of $\cM_{\rm erg}(\Lambda_{\rm core})$. The overall hypotheses are again (H1) and (H2), and we will discuss further additional conditions under which we are able to say more than in the previous sections.

\begin{proposition}\label{pro:Katok}
Assume that every pair of fiber expanding hyperbolic periodic orbits in $\Lambda_{\rm core}$ are homoclinically related.  Then the set $\cM_{\rm erg,>0}(\Lambda_{\rm core})$ is arcwise-connected. 
The analogous result holds true for fiber contracting hyperbolic periodic orbits in $\Lambda_{\rm core}$ and the set $\cM_{\rm erg,<0}(\Lambda_{\rm core})$.
\end{proposition}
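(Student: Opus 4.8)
The plan is to prove that $\cM_{\rm erg,>0}(\Lambda_{\rm core})$ is arcwise-connected by exhibiting, for any two of its measures, a continuous path inside the set joining them. The natural strategy is to first reduce to periodic measures and then use the homoclinic relation hypothesis to connect those. Concretely, I would proceed in three stages. First, I would invoke a Katok-type horseshoe approximation: any ergodic measure $\mu$ with $\chi(\mu)>0$ in $\Lambda_{\rm core}$ can be weak$\ast$ approximated (and approximated in entropy, though weak$\ast$ is what we need here) by measures supported on horseshoes, in particular by periodic measures with positive fiber exponent lying in $\Lambda_{\rm core}$. This lets me connect $\mu$ to a nearby periodic measure $\delta_{\mathcal{O}(P)}$ by a short arc, so it suffices to connect periodic measures to one another.

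The second and central stage is to connect any two fiber-expanding periodic measures $\mu_P$ and $\mu_Q$ in $\Lambda_{\rm core}$. Here the hypothesis that all fiber-expanding hyperbolic periodic orbits in $\Lambda_{\rm core}$ are homoclinically related is exactly what I need. The standard device is: since $P$ and $Q$ are homoclinically related, their orbits lie in a common transitive hyperbolic set (a horseshoe) $H$ with positive fiber exponent. On such a horseshoe, the set of ergodic (indeed periodic) measures is well understood, and one can construct, using the specification/shadowing property available on $H$, periodic orbits that spend a controlled proportion of time near $\mathcal{O}(P)$ and the complementary proportion near $\mathcal{O}(Q)$. Letting this proportion vary continuously from $0$ to $1$ produces a weak$\ast$-continuous path of periodic measures from $\mu_Q$ to $\mu_P$, all supported in $H\subset\Lambda_{\rm core}$ and all with positive fiber exponent (the exponents along the path stay bounded away from zero because $H$ is uniformly fiber-expanding). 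This is the argument that genuinely uses homoclinic relatedness, and it is the step I expect to be the main obstacle: one must verify that the shadowing orbits remain in $\Lambda_{\rm core}=\Sigma_2\times(0,1)$ and that the path of measures is honestly continuous rather than merely connecting the endpoints, which requires care with the specification gluing lengths as the time-proportion parameter varies.

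The third stage simply assembles the pieces: given arbitrary $\mu,\mu'\in\cM_{\rm erg,>0}(\Lambda_{\rm core})$, choose periodic approximants $\mu_P,\mu_Q$, join $\mu$ to $\mu_P$ and $\mu'$ to $\mu_Q$ by the short Katok arcs, and join $\mu_P$ to $\mu_Q$ by the specification path. Concatenation of continuous arcs is continuous, so the result is a single arc inside $\cM_{\rm erg,>0}(\Lambda_{\rm core})$ from $\mu$ to $\mu'$, proving arcwise-connectedness. The argument for the fiber-contracting case is identical after replacing ``expanding'' by ``contracting'' and ``positive exponent'' by ``negative exponent''; indeed one could also obtain it by a symmetry/time-reversal remark, but it is cleanest to note that the proof used only the uniform hyperbolicity of the common horseshoe and the homoclinic relation hypothesis, both of which hold verbatim in the contracting setting. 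The main technical point to be careful about throughout is keeping the fiber exponents uniformly away from $0$ along the entire path, so that no measure on the arc accidentally leaves $\cM_{\rm erg,>0}(\Lambda_{\rm core})$.
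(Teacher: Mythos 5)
Your skeleton (Katok-type periodic approximation, then the homoclinic-relation hypothesis to connect periodic measures inside a common horseshoe) is the same as the paper's, and your first stage matches its Lemma~\ref{lem:sylv} (a Crovisier-type ersatz of Katok's construction, which indeed keeps the approximating periodic orbits inside $\Lambda_{\rm core}$ with exponent of the same sign). However, two of your steps would fail as written. The more serious one is the final assembly: you join a general ergodic measure $\mu$ to a periodic approximant $\mu_P$ by a ``short Katok arc''. No such arc exists from the approximation alone: weak$\ast$ proximity produces a \emph{sequence} of periodic measures converging to $\mu$, not a path, and $\cM_{\rm erg,>0}(\Lambda_{\rm core})$ is not known at this stage to be locally arcwise connected. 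The paper fills exactly this hole with an infinite concatenation: it takes the whole sequence $\nu_n^0\to\mu^0$ of periodic approximants, joins each consecutive pair $\nu_n^0,\nu_{n+1}^0$ (homoclinically related by hypothesis, hence contained in a common horseshoe) by an arc, parametrizes these arcs on intervals $[1/3^{n+1},1/3^n]$ shrinking to the endpoint, and then \emph{defines} the value at the endpoint to be $\mu^0$; continuity there is precisely the convergence $\nu_n^0\to\mu^0$. Without this telescoping device your third stage cannot be executed.

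The second gap is in your middle stage: a ``weak$\ast$-continuous path of periodic measures'' obtained by continuously varying the time-proportion cannot exist, since periodic measures form a countable set and a nonconstant continuous arc has uncountable image; moreover, the natural limits of long periodic orbits spending proportion $t$ of their time near $\cO(P)$ and $1-t$ near $\cO(Q)$ are the convex combinations $t\mu_P+(1-t)\mu_Q$, which are not ergodic. Any genuine arc must pass through non-periodic ergodic measures, and constructing it is a nontrivial theorem. The paper sidesteps this by citing structure rather than rebuilding it: the horseshoe $\Gamma$ containing both orbits has $\cM(\Gamma)$ a Poulsen simplex \cite{LinOlsSte:78,Sig:77}, whose extreme points (the ergodic measures) form an arcwise connected set; since $\Gamma$ is uniformly fiber expanding, the resulting arc stays in $\cM_{\rm erg,>0}(\Lambda_{\rm core})$, as you correctly anticipated. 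If you insist on a specification-based construction, you would need the careful nested interpolation producing ergodic (generally non-periodic) measures along the way, in the spirit of Sigmund's connectedness results, not merely a varying proportion parameter.
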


A map $F$ whose fiber maps $f_0,f_1$ satisfy the hypotheses (H1), (H2'), (H3), and (H4) will satisfy the hypotheses of the above proposition. 

We will several times refer to a slightly strengthened version of~\cite[Proposition 1.4]{Cro:11} which, in fact, is contained in its proof in~\cite{Cro:11} and which can be seen as an ersatz of Katok's horseshoe construction (see~\cite[Supplement S.5]{KatHas:95}) in the $C^1$ dominated setting. We formulate it in our setting. Note that to guarantee that the approximating periodic orbits are indeed contained in $\Sigma_2\times(0,1)$ it suffices to observe that in the approximation arguments one can consider any sufficiently large (in measure $\mu$) set and hence restrict to points which are uniformly away from the ``boundary'' $\Sigma_2\times\{0,1\}$. Indeed, the projection to $[0,1]$ of the support of $\mu$ can be the whole interval $[0,1]$ but it does not ``concentrate'' in $\{0,1\}$.

\begin{lemma}\label{lem:sylv}
Let $\star\in\{<0,>0\}$ and $\mu\in\cM_{\rm erg,\star}(\Lambda_{\rm core})$. 

Then for every $\rho\in(0,1)$ there exist $\alpha>0$ and a set $\Gamma_\rho\subset\Sigma_2\times(2\alpha,1-2\alpha)$ and a number  $\delta=\delta(\rho,\mu)>0$ such that $\mu(\Gamma_\rho)>1-\rho$ and for every point $X\in\Gamma_\rho$ there is a sequence $(p_n)_n\subset\Sigma_2\times(\alpha,1-\alpha)$ of hyperbolic periodic points such that:
\begin{itemize}
\item $p_n$ converges to $X$ as $n\to\infty$;
\item the invariant measures $\mu_n$ supported on the orbit of $p_n$ are contained in $\cM_{\rm erg,\star}(\Lambda_{\rm core})$ and converge to $\mu$ in the weak$\ast$ topology;
\end{itemize}
\end{lemma}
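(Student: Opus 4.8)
The plan is to obtain Lemma~\ref{lem:sylv} as the transcription to our setting of the strengthened version of~\cite[Proposition 1.4]{Cro:11} recalled above, the only genuinely new point being the uniform separation of the approximating orbits from the boundary $\Sigma_2\times\{0,1\}$. First I would fix the scale $\alpha$. Since $\mu\in\cM_{\rm erg,\star}(\Lambda_{\rm core})$ is supported in $\Lambda_{\rm core}=\Sigma_2\times(0,1)$, we have $\mu(\Sigma_2\times\{0,1\})=0$, so by regularity of $\mu$ there is $\alpha>0$ with $\mu(\Sigma_2\times(2\alpha,1-2\alpha))>1-\rho/2$. This is the region in which the recurrence/Pesin block will be placed.

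Next I would invoke the Katok-type construction. Viewing $F$ as (the restriction to a partially hyperbolic set of) a $C^1$ system whose one-dimensional center is the fiber direction, the ergodic measure $\mu$ is hyperbolic precisely because its only central exponent $\chi(\mu)$ is nonzero, and it carries a dominated splitting; hence the strengthened \cite[Proposition 1.4]{Cro:11} applies (this is the $C^1$ dominated ersatz of Katok's horseshoe, \cite[Supplement S.5]{KatHas:95}). It produces a uniform hyperbolicity scale $\delta=\delta(\rho,\mu)>0$ and a compact block $\Gamma\subset\Sigma_2\times(0,1)$ of Birkhoff-- and Lyapunov--regular generic points with $\mu(\Gamma)>1-\rho/2$, such that every $X\in\Gamma$ is a limit of hyperbolic periodic points $p_n$ whose orbit measures $\mu_n$ converge weak$\ast$ to $\mu$ and whose exponents $\chi(\mu_n)$ converge to $\chi(\mu)$. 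I would then set $\Gamma_\rho\eqdef\Gamma\cap(\Sigma_2\times(2\alpha,1-2\alpha))$, so that $\mu(\Gamma_\rho)>1-\rho$, and record that $\delta$ is exactly the scale used downstream (in the proof of Proposition~\ref{pro:Katok}) to link nearby orbits through local invariant manifolds of uniform size.

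It remains to verify the two required containments, which is where our specific setting enters. Each fiber map $f_0,f_1$ maps the open interval $(0,1)$ bijectively onto itself, so $\Lambda_{\rm core}=\Sigma_2\times(0,1)$ is $F$-invariant. Consequently the \emph{entire} orbit of any periodic point $p_n=(\eta,y)$ with $y\in(0,1)$ stays in $\Lambda_{\rm core}$, whence $\mu_n$ is supported in $\Lambda_{\rm core}$; this is why the approximating orbits never leak into the exposed piece, no matter how closely the orbit of $X$ itself may approach $\{0,1\}$. Since $\chi(\mu_n)\to\chi(\mu)\ne0$, for all large $n$ the exponent $\chi(\mu_n)$ is nonzero with the same sign as $\chi(\mu)$, giving $\mu_n\in\cM_{\rm erg,\star}(\Lambda_{\rm core})$. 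Finally, because $p_n\to X$ and the fiber coordinate of $X\in\Gamma_\rho$ lies in $(2\alpha,1-2\alpha)$, the fiber coordinate of $p_n$ lies in $(\alpha,1-\alpha)$ for all large $n$; discarding finitely many initial terms yields $(p_n)_n\subset\Sigma_2\times(\alpha,1-\alpha)$.

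The genuinely delicate input is the cited $C^1$ Katok construction, which supplies the periodic orbits, their equidistribution, and the uniform scale $\delta$; within our own argument the step requiring care is the boundary control, and I expect this to be the main conceptual point rather than a technical obstacle. Its resolution rests on the observation that one must not ask the full orbits to remain in $\Sigma_2\times(\alpha,1-\alpha)$ (they may legitimately drift toward $\{0,1\}$): it suffices that the periodic \emph{points} land in $(\alpha,1-\alpha)$, which follows from $p_n\to X\in\Gamma_\rho$, and that the orbits remain in the \emph{open} core, which follows from the invariance of the open fibers. The small $C^1$ subtlety to keep in mind is that the periodic point $y$ produced by closing lies in a shrinking fiber neighbourhood of the interior point $x$, hence $y\in(0,1)$ for large $n$, so invariance indeed applies.
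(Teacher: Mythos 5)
Your proposal is correct and follows essentially the same route as the paper: the paper likewise gives no self-contained argument but derives the lemma from the strengthened version of \cite[Proposition 1.4]{Cro:11} (the $C^1$ dominated ersatz of Katok's horseshoe construction), handling the boundary exactly as you do, namely by noting that $\mu(\Sigma_2\times\{0,1\})=0$ so one may restrict to a large-measure set uniformly away from $\Sigma_2\times\{0,1\}$, while invariance of the open core and convergence of the exponents $\chi(\mu_n)\to\chi(\mu)\ne0$ place the approximating periodic measures in $\cM_{\rm erg,\star}(\Lambda_{\rm core})$. Your write-up in fact spells out these verifications more explicitly than the paper, which leaves them as a remark preceding the lemma.
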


\begin{proof}[Proof of Proposition~\ref{pro:Katok}]
Similar results were shown before, though in slightly different contexts (see~\cite{GorPes:17} and \cite[Theorem 3.2]{DiaGelRam:17}). For completeness, we sketch the proof. 

Assume that $\mu^0,\mu^1\in\cM_{\rm erg,>0}(\Lambda_{\rm core})$. By Lemma~\ref{lem:sylv}, $\mu^i$ is accumulated by a sequence of hyperbolic periodic measures $\nu_n^i\in\cM_{\rm erg,>0}(\Lambda_{\rm core})$ supported on the orbits of fiber expanding hyperbolic periodic points $P_n^i\in\Lambda_{\rm core}$, $i=0,1$. Since, by hypothesis, $P_1^0$ and $P_1^1$ are homoclinically related, there exists a horseshoe $\Gamma_1^{0,1}\subset\Lambda_{\rm core}$ containing these two points. Hence, since $\cM(\Gamma_1^{0,1})$ is a Poulsen simplex \cite{LinOlsSte:78,Sig:77}, there is a continuous arc $\mu_0\colon[1/3,2/3]\to\cM_{\rm erg,>0}(\Gamma_1^{0,1})\subset \cM_{\rm erg,>0}(\Lambda_{\rm core})$ joining the measures $\nu_1^0$ and $\nu_1^1$. For any pair of measures $\nu_n^0,\nu_{n+1}^0$, the same arguments apply and, in particular, there exists a continuous arc $\mu_n^0\colon[1/3^{n+1},1/3^n]\to\cM_{\rm erg,>0}(\Lambda_{\rm core})$ joining the measure $\nu_n^0$ with $\nu_{n+1}^0$. 
Using those arcs and concatenating  their domains (or appropriate parts of), we can construct an arc $\bar \mu_n^0\colon[1/3^{n+1},1/3]$ joining $\nu_{n+1}^0$ and $\nu_{1}^0$.
The same applies to the measures $\nu_n^1$, defining arcs $\bar \mu_n^1 \colon[1-1/3^n,2/3]\to\cM_{\rm erg,>0}(\Lambda_{\rm core})$ 
joining $\nu_{n+1}^1$ and $\nu_{1}^1$. 
 Defining $\mu_\infty|_{(0,1)}\colon(0,1)\to\cM_{\rm erg,>0}(\Lambda_{\rm core})$ by concatenating (appropriate parts of) the domains of those arcs,  we  complete the definition of the arc 
$\mu_\infty$ by letting $\mu_\infty(0)=\lim_{n\to\infty}\bar \mu_n^0(1/3^n)$ 
and $\mu_\infty(1)=\lim_{n\to\infty}\bar \mu_n^1(1-1/3^n)$,
joining $\mu^0$ and $\mu^1$. Note that in the last step we assume that $\mu^1,\mu^2$ do not belong to the image of $\mu_\infty$, if one of these measures belongs it is enough to cut the domain of definition of $\mu_\infty$ appropriately.
\end{proof}

\appendix
\section{Transitivity and homoclinic relations. Proof of Proposition~\ref{pro:22}}
 \label{a.apendix}

In this appendix we prove Proposition~\ref{pro:22}. Hence, we will always assume that hypotheses (H1), (H2'), (H3), (H4) are satisfied.

\subsection{The underlying IFS}
\label{a.underlying}
Studying the iterated function system (IFS) associated to the maps $\{f_0,f_1\}$, we use the following notations.
Every sequence $\xi=(\ldots\xi_{-1}.\xi_0\xi_1\ldots)\in\Sigma_2$ is given by $\xi=\xi^-.\xi^+$, where $\xi^+\in\Sigma_2^+\eqdef\{0,1\}^{\bN_0}$ and $\xi^-\in\Sigma_2^-\eqdef\{0,1\}^{-\bN}$.
Given  \emph{finite} sequences $(\xi_0\ldots \xi_n)$ and  $(\xi_{-m}\ldots\xi_{-1})$, we let
\[\begin{split}
    f_{[\xi_0\ldots\,\xi_n]}
    &\eqdef f_{\xi_n} \circ \cdots \circ f_{\xi_1}\circ f_{\xi_0} 
    \quad\text{ and }\quad\\
    f_{[\xi_{-m}\ldots\,\xi_{-1}.]}
    &\eqdef  (f_{\xi_{-1}}\circ\ldots\circ f_{\xi_{-m}})^{-1}
    =(f_{[\xi_{-m}\ldots\,\xi_{-1}]})^{-1}.
\end{split}\]

\subsubsection{Expanding itineraries}

Under hypotheses (H1) and (H4) there are a positive number $\varepsilon$ arbitrarily close to $0$, a positive integer $N(\varepsilon)$, and fundamental domains
$I_0(\varepsilon)= [\varepsilon, f_0(\varepsilon)]$ and  $I_1(\varepsilon)= [1-\varepsilon, f_0(1-\varepsilon)]$ of $f_0$ having the following properties%
\footnote{Just note that, by the mean value theorem, there is $z\in I_0(\varepsilon)$ with $(f_0^{N(\varepsilon)})'(z) =|I_1(\varepsilon)|/|I_0(\varepsilon)|$,
that by monotonicity of the derivative of $f_0'$ we have $(f_0^{N(\varepsilon)})'(f_0(z)) \ge \lambda \, |I_1(\varepsilon)|/(\beta\, |I_0(\varepsilon)|)$ and that $(f_0^{N(\varepsilon)})'(x)\ge (f_0^{N(\varepsilon)})'(f_0(z))$ for all $x\in I_0(\varepsilon)$, and that for small $\varepsilon$ we have $|I_0(\varepsilon)|\simeq (\beta -1)\, \varepsilon$ and $|I_1(\varepsilon)|\simeq (1-\lambda)\, \varepsilon$.}
\begin{equation}\label{e.expansion}
	f_0^{N(\varepsilon)} (I_0(\varepsilon))=I_1(\varepsilon) 
	\quad \mbox{and}\quad
	(f_0^{N(\varepsilon)})^\prime (x)
	\ge \lambda^{-1} \varkappa >1
	\quad\text{for all }x\in I_0 (\varepsilon). 
\end{equation}
In what follows we fix small $\varepsilon>0$ satisfying the above conditions and for simplicity we write $I_0$, $I_1$, and $N$ instead of $I_0(\varepsilon)$, $I_1(\varepsilon)$, and $N(\varepsilon)$.

Our construction now is analogous to the one in \cite{DiaGelRam:13}. We sketch the main steps for completeness. Assuming additionally (H2'), given an interval $H\subset f_0^{-1}(I_0)\cup I_0$ we let $N(H)=N$ if $H\subset I_0$ and
$N(H)=N+1$ otherwise and consider the interval $f_{[0^{N(H)}1]}(H)$. By construction, this interval is
contained in $[\delta (\varepsilon),\varepsilon]$, where $\delta(\varepsilon)=1-f_0^2 (1-\varepsilon)$. Note that, by construction,
$\delta (\varepsilon)<\varepsilon$.
Therefore there is a first $M(H)$ such that
$$
	f_{ [0^{N(H)}10^{M(H)} ]}(H)\cap (\varepsilon,f_0(\varepsilon)]\ne\emptyset.
$$
 The \emph{expanded successor} of $H$ is the interval $H'\eqdef f_{ [0^{N(H)}10^{M(H)} ]}(H)$. The \emph{expanding return sequence} of $H$ is the finite sequence $0^{N(H)}10^{M(H)}$ By construction  the interval $H'$  intersects the interior of $I_0$ and is contained in $[\delta(\varepsilon), f_0(\varepsilon)]$. Also observe that there is $M$ such that $M(H)\in \{1,\ldots, M\}$ for every subinterval $H$ in $ f_0^{-1}(I_0)\cup I_0$. The following lemma justifies our terminology expanded successor.

\begin{lemma}[Expanding itineraries {\cite[Lemma~2.3]{DiaGelRam:13}}]\label{l.expandedsucessor}
For every closed subinterval $H$ of $f_0^{-1}(I_0)\cup I_0$ and every $x\in H$ it holds
$$
\big| \big(f_{[0^{N(H)}10^{M(H)}]}\big)'(x) \big| \ge \varkappa>1.
$$
\end{lemma}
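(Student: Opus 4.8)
The plan is to track how the derivative of $f_0^{N(H)}$ on the initial return, combined with the subsequent steps, accumulates enough expansion to beat the threshold $\varkappa$. The key structural fact I would exploit is the expansion estimate already established in~\eqref{e.expansion}, namely that $(f_0^{N(\varepsilon)})'(x)\ge \lambda^{-1}\varkappa$ for all $x\in I_0(\varepsilon)$, together with the bookkeeping that the interval $H'=f_{[0^{N(H)}10^{M(H)}]}(H)$ is constructed so that its successive images stay in controlled fundamental domains near the fixed points $0$ and $1$.

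First I would decompose the map $f_{[0^{N(H)}10^{M(H)}]}$ into three stages: the expanding passage $f_0^{N(H)}$ which carries $H$ (inside $f_0^{-1}(I_0)\cup I_0$) up to the fundamental domain $I_1$ near $1$, the single crossing step $f_1$ which by (H2') is the involution $x\mapsto 1-x$ and hence has derivative of modulus exactly $1$, and the final passage $f_0^{M(H)}$ bringing the interval back down toward $I_0$. The crucial observation is that the middle map $f_1$ contributes a factor of exactly $1$ to the derivative, so it neither helps nor hurts; all expansion must come from the two $f_0$-passages. For the first passage I would invoke the lower bound in~\eqref{e.expansion}, getting a factor at least $\lambda^{-1}\varkappa$ on $I_0$ (with the minor adjustment by the factor $\beta$ when $H\subset f_0^{-1}(I_0)\setminus I_0$, where one extra $f_0$-step of derivative near $\beta>1$ is applied, only improving the estimate).

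The main quantitative step is then to control the \emph{contraction} incurred during the final passage $f_0^{M(H)}$. Near the attracting fixed point $1$, the map $f_0$ contracts with rate near $\lambda<1$, so a priori $f_0^{M(H)}$ could kill the expansion gained. Here I would use that $M(H)$ is the \emph{first} integer for which $f_{[0^{N(H)}10^{M(H)}]}(H)$ meets $(\varepsilon,f_0(\varepsilon)]$, so the image under $f_0^{M(H)-1}$ still lies beyond the fundamental domain $I_0(\varepsilon)$; this minimality pins down how much contraction can have occurred, and the point $z\in[\delta(\varepsilon),\varepsilon]$ entering the final passage is controlled by the explicit computation relating $\delta(\varepsilon)=1-f_0^2(1-\varepsilon)$ to $\varepsilon$. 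Multiplying the three factors and using the definition $\varkappa=\lambda^2(1-\lambda)/(\beta(\beta-1))$ from (H4), the $\lambda$- and $\beta$-dependent pieces should cancel to leave a net factor bounded below by $\varkappa>1$.

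The hard part, I expect, will be the last step: getting the final contracting passage $f_0^{M(H)}$ estimated sharply enough that the product does not drop below $\varkappa$. This is exactly where hypothesis (H4) is designed to enter, and where the precise asymptotics $|I_0(\varepsilon)|\simeq(\beta-1)\varepsilon$, $|I_1(\varepsilon)|\simeq(1-\lambda)\varepsilon$ (as in the footnote to~\eqref{e.expansion}) must be combined with the monotonicity of $f_0'$ to bound the derivative of $f_0^{M(H)}$ at the relevant point from below by roughly $\lambda^2(1-\lambda)/(\beta-1)$ times a controlled correction. Since the construction mirrors~\cite[Lemma~2.3]{DiaGelRam:13}, I would follow that argument, replacing their distortion bookkeeping by the explicit fundamental-domain estimates available here under (H2').
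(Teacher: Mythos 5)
Your three-stage decomposition is the right start, but your treatment of the third stage rests on a geometric misreading, and this is a genuine gap. After $f_0^{N(H)}$ carries $H$ into $I_1\cup f_0(I_1)$ near the attracting fixed point $1$, the single application of $f_1(x)=1-x$ sends the image into $[\delta(\varepsilon),\varepsilon]$, that is, next to the \emph{repelling} fixed point $0$ (as you yourself note when you locate the point entering the final passage). The passage $f_0^{M(H)}$ then pushes the interval \emph{upward} from $[\delta(\varepsilon),\varepsilon]$ toward $I_0$, and by minimality of $M(H)$ all the intermediate images stay in $[0,\varepsilon]$. Since $\varepsilon<c$ and $f_0'$ is decreasing with $f_0'(c)=1$ by (H3), every one of these $M(H)$ steps has derivative strictly larger than $1$. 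So there is no contraction to control at all: the ``hard part'' you set up --- estimating a contracting passage near $1$ and cancelling $\lambda$- and $\beta$-dependent factors via (H4) --- addresses a problem that does not occur, and that cancellation is not how the estimate closes. Hypothesis (H4) enters only once, in establishing \eqref{e.expansion} itself (the mean value theorem computation in the footnote); the lemma then follows by multiplying three factors: at least $\varkappa$ from the first passage, exactly $1$ from $f_1$, and strictly more than $1$ from the last passage.

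There is also a bookkeeping slip in your first stage. You claim the extra iterate (when $N(H)=N+1$) only improves the estimate because it is a step near $0$ with derivative close to $\beta>1$. That is correct for points $x\in H\cap f_0^{-1}(I_0)$, but when $H$ straddles $f_0^{-1}(I_0)$ and $I_0$ and $x\in H\cap I_0$, the extra iterate is applied at the \emph{end}, to a point of $I_1$, where the derivative is only bounded below by $\lambda<1$, so it can degrade the bound. This is precisely why \eqref{e.expansion} is stated with the lower bound $\lambda^{-1}\varkappa$ rather than $\varkappa$: the spare factor $\lambda^{-1}$ absorbs this possible loss, giving $\big|\big(f_{[0^{N(H)}]}\big)'(x)\big|\ge\lambda\cdot\lambda^{-1}\varkappa=\varkappa$ for every $x\in H$ in all cases, which is the only estimate the first passage needs to deliver.
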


\begin{proof}
By \eqref{e.expansion} and the choice of $H$ we have
$\big|(f_{[0^{N(H)}]})^\prime (x) \big|\ge \varkappa$ for all $x\in H$. The assertion follows noting that $f_1(x)=1-x$,
$(f_{[0^{N(H)}1]})(x)\in [0,\varepsilon]$ if $x\in H $,
 and 
$f_0^\prime(y)>1$ if $y\in [0,\varepsilon]$. 
\end{proof}

Lemma~\ref{l.expandedsucessor} and an inductive argument immediately implies the following:

\begin{lemma}[{\cite[Lemma~2.3]{DiaGelRam:13}}]\label{l.expandedcovering}
For every closed subinterval $H$ of $f_0^{-1}(I_0)\cup I_0$ there is a finite sequence $(\xi_0\ldots \xi_{\ell(H)})$ such that
\begin{enumerate}
\item
$\big| \big( f_{ [\xi_0\ldots \,\xi_{\ell(H)}]}\big)'(x) \big|\ge \varkappa$ for every $x\in H$ and
\item
$f_{[\xi_0\ldots\, \xi_{\ell(H)]}} \supset f_0^{-1}(I_0)$.
\end{enumerate}
\end{lemma}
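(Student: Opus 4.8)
The goal is to prove Lemma~\ref{l.expandedcovering}: for every closed subinterval $H$ of $f_0^{-1}(I_0)\cup I_0$ there is a finite itinerary $(\xi_0\ldots\xi_{\ell(H)})$ whose composition expands on $H$ by at least $\varkappa$ and whose image covers $f_0^{-1}(I_0)$. The plan is to bootstrap Lemma~\ref{l.expandedsucessor} by iterating the expanded-successor construction, controlling both expansion and position along the way.

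First I would set up the inductive scheme. Starting from $H^{(0)}\eqdef H$, I repeatedly pass to the expanded successor: having produced $H^{(j)}$, a closed subinterval of $f_0^{-1}(I_0)\cup I_0$ that meets the interior of $I_0$ and lies in $[\delta(\varepsilon),f_0(\varepsilon)]$, I apply the expanding return sequence to obtain $H^{(j+1)}\eqdef f_{[0^{N(H^{(j)})}10^{M(H^{(j)})}]}(H^{(j)})$, which by construction again meets the interior of $I_0$ and lies in $[\delta(\varepsilon),f_0(\varepsilon)]$. Concatenating the return sequences gives a growing finite word $\xi_0\ldots\xi_{k}$ whose associated composition maps $H$ onto $H^{(j+1)}$. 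By Lemma~\ref{l.expandedsucessor} each such step contributes a derivative factor at least $\varkappa>1$ pointwise on the relevant interval, so after even a single step the expansion bound in item (1) is already secured and is only reinforced by further iterations; the chain rule makes the expansion multiply along the concatenation.

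The substantive point is item (2), the covering property. The successor intervals $H^{(j)}$ all sit inside the fundamental domain region $[\delta(\varepsilon),f_0(\varepsilon)]$ and each meets the interior of $I_0=[\varepsilon,f_0(\varepsilon)]$, but individually they need not contain all of $f_0^{-1}(I_0)$. To get the covering I would use uniform expansion to force the length of $H^{(j)}$ to grow: since the composition expands by at least $\varkappa$ at each step while the images remain trapped in a bounded region of definite size, the lengths $|H^{(j)}|$ cannot stay small. More precisely, because $H^{(j+1)}$ is obtained from $H^{(j)}$ by a map expanding by at least $\varkappa$, and all $H^{(j)}$ lie in the fixed interval $[\delta(\varepsilon),f_0(\varepsilon)]$, after finitely many steps the expanded image must contain an interval of length at least $|I_0|+|f_0^{-1}(I_0)\setminus I_0|$; combined with the fact that $H^{(j)}$ always meets the interior of $I_0$, this positions and sizes the interval so that one further controlled application of $f_0^{-1}$ (accounted for by the $0^{N(H)}$ prefix in the next return sequence) makes the image engulf $f_0^{-1}(I_0)$. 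At that stopping index $j$ I set $\ell(H)$ to be the length of the accumulated word, and item (2) holds.

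The main obstacle I anticipate is making the covering argument quantitative while respecting the \emph{boundedness} of the ambient region: expansion alone guarantees growth of length, but I must rule out the image leaving the domain $f_0^{-1}(I_0)\cup I_0$ on which Lemma~\ref{l.expandedsucessor} applies before it has grown large enough to cover. This is handled by the two structural facts built into the successor construction, namely that every $H^{(j)}$ stays inside $[\delta(\varepsilon),f_0(\varepsilon)]$ and that the return time $M(H)$ is uniformly bounded by some $M$; together these give a uniform lower bound on how much relative expansion is realized per unit symbol-length, so the number of steps needed to reach full length, and hence $\ell(H)$, is finite and can be bounded uniformly. Once the image is long enough and still anchored near $I_0$, the final covering of $f_0^{-1}(I_0)$ follows by monotonicity of $f_0$ together with the inclusion $f_0^{-1}(I_0)\subset f_0^{-1}(I_0)\cup I_0$. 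This yields both conclusions and completes the inductive proof.
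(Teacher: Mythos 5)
Your proposal is correct and follows essentially the same route as the paper's proof: iterate the expanded-successor construction, concatenate the expanding return words so that Lemma~\ref{l.expandedsucessor} and the chain rule give item (1), and use the geometric length growth $|H^{(j)}|\ge\varkappa^{j}|H|$ inside the trapping region $[\delta(\varepsilon),f_0(\varepsilon)]$, together with the fact that each successor meets the interior of $I_0$, to force covering of $f_0^{-1}(I_0)$ after finitely many steps. The only blemish is the superfluous ``one further controlled application of $f_0^{-1}$'' at the end: once the accumulated image has length at least $|f_0^{-1}(I_0)\cup I_0|$, lies in $[\delta(\varepsilon),f_0(\varepsilon)]$, and meets $(\varepsilon,f_0(\varepsilon)]$, it already contains $f_0^{-1}(I_0)$, no backward iteration being needed.
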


\begin{proof}
Write $H_0$ and let $H_1=H_0'$ be its expanding successor. We argue recursively, if $H_1$ contains $f_0^{-1}(I_0)$ we stop the recursion, otherwise we observe that $|H_1|\ge \varkappa |H_0|$ and consider the expanding successor $H_2=H'_1$.
Since $H_i\ge \varkappa^i |H_0|$ there is a first $i$ such that $H_i$ contains $f_0^{-1}(I_0)$. We let $(\xi_0\ldots \xi_{\ell(H)})$ be the concatenation of the successive expanding returns. 
\end{proof}

Given a set $H\subset [0,1]$ denote its {\emph{forward orbit}} by the IFS by
$$
\cO^+(H)\eqdef \bigcup_{k\ge 0}
 \, 
 \bigcup_{(\xi_0\ldots\, \xi_k) \in \{0,1\}^{k+1}   }  f_{[\xi_0\ldots\, \xi_k]} (H).
$$
 A special case occurs when the set $H$ is a point.

\begin{lemma}\label{l.c.expandingperiodic}
For every point $p\in (0,1)$ there are a small neighborhood $I(p)$ of $p$ and a finite sequence $\eta_{0}\ldots \eta_{r}$, $r=r(I(p))$, such that
\begin{enumerate}
\item[(1)]
$f_{[\eta_{0}\ldots \eta_{r}]}(I(p))\supset I(p)$,
\item[(2)]
$\big| \big(f_{[\eta_{0}\ldots \eta_{r}]})^\prime (x)\big| >1$ for all $x\in I(p)$, and
\item[(3)]
$\cO^+(I(p))=(0,1)$.
\end{enumerate}
\end{lemma}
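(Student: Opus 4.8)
The plan is to bring a small neighborhood of $p$ into the blender domain $f_0^{-1}(I_0)\cup I_0$, to expand it there by means of the covering Lemma~\ref{l.expandedcovering}, and finally to let it return over a fixed interval containing $p$. Concretely, I would produce three finite words: an \emph{approach word} $u$, a \emph{blender word} $v$, and a \emph{return word} $s$, and take $\eta_0\ldots\eta_r$ to be their concatenation, so that $f_{[\eta_0\ldots\eta_r]}=f_s\circ f_v\circ f_u$.

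For the approach word I would use that $0$ is repelling and $1$ is attracting for $f_0$ (by (H1)), together with the reflection $f_1(x)=1-x$ (by (H2')): pushing $p$ toward $1$ by a suitable power $f_0^n$, reflecting by $f_1$ to land near $0$, and iterating $f_0^m$ until the orbit first enters $I_0$, one obtains $u$ with $f_u(p)\in\operatorname{int}I_0$, whence $f_u(I(p))$ is a closed subinterval of $\operatorname{int}(f_0^{-1}(I_0)\cup I_0)$ once $I(p)$ is small. The same elementary dynamics give the covering statement $\cO^+(\operatorname{int}f_0^{-1}(I_0))=(0,1)$: the forward $f_0$-orbit of $f_0^{-1}(I_0)$ already exhausts $[f_0^{-1}(\varepsilon),1)$, and applying $f_1$ to the part near $1$ fills in the remaining $(0,f_0^{-1}(\varepsilon))$; passing to the open interval makes all images open, so their union is exactly $(0,1)$. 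In particular there is a return word $s$ with $p\in\operatorname{int}\big(f_s(f_0^{-1}(I_0))\big)$, and I fix once and for all the nondegenerate interval $K\eqdef f_s(f_0^{-1}(I_0))\ni p$.

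Given these words, I would choose $I(p)$ a small (closed) neighborhood of $p$ with $f_u(I(p))\subset\operatorname{int}(f_0^{-1}(I_0)\cup I_0)$ and $I(p)\subset K$, set $H\eqdef f_u(I(p))$, and apply Lemma~\ref{l.expandedcovering} to get $v$ with $f_v(H)\supset f_0^{-1}(I_0)$. Property~(1) is then immediate, since $f_s$ is a homeomorphism and preserves inclusions:
\[
	f_{[\eta_0\ldots\eta_r]}(I(p))
	=f_s\big(f_v(H)\big)\supset f_s\big(f_0^{-1}(I_0)\big)=K\supset I(p).
\]
Property~(3) follows as well, because $f_v(f_u(I(p)))\supset f_0^{-1}(I_0)$ gives $\cO^+(I(p))\supset\cO^+(f_0^{-1}(I_0))=(0,1)$, while $\cO^+(I(p))\subset(0,1)$ since $f_0$ and $f_1$ preserve $(0,1)$.

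The only delicate point is property~(2): the words $f_u,f_s$ may contract, so I must guarantee that the expansion gained inside the blender dominates this bounded loss on \emph{all} of $I(p)$. The crucial observation is that the word $v$ from Lemma~\ref{l.expandedcovering} is a concatenation of $i=i(H)$ expanding-successor blocks, each contributing a factor $\ge\varkappa$ uniformly on its whole input interval by Lemma~\ref{l.expandedsucessor}; hence, by the chain rule, $|f_v'|\ge\varkappa^{\,i(H)}$ on all of $H$. Since $|H_j|\ge\varkappa^{\,j}|H|$ forces $i(H)\to\infty$ as $|H|\to0$, shrinking $I(p)$ makes $\varkappa^{\,i(H)}$ arbitrarily large, whereas $\min_{I(p)}|f_u'|\to|f_u'(p)|>0$ and $\min_{[0,1]}|f_s'|\ge c_s>0$ stay bounded below (finite compositions of the diffeomorphism $f_0$ and the reflection $f_1$ have nonvanishing derivative on the compact interval). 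Therefore
\[
	\big|\big(f_{[\eta_0\ldots\eta_r]}\big)'(x)\big|
	\ge c_s\cdot\varkappa^{\,i(H)}\cdot\min_{I(p)}|f_u'|
	>1
	\qquad\text{for all }x\in I(p)
\]
once $I(p)$ is small enough, a choice that also preserves the inclusions used for~(1) and~(3). Thus the genuine obstacle—uniform expansion over the \emph{entire} neighborhood despite possibly contracting approach and return segments—is overcome precisely because a smaller input interval forces more expanding-successor steps; the remainder is routine bookkeeping on the words and on the smallness of $I(p)$.
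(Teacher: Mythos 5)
Your architecture is genuinely different from the paper's. The paper handles a general $p$ by a reduction: replacing $p$ by a backward $f_0$-iterate (and possibly shrinking $\varepsilon$) it assumes $p\in(f_0^{-1}(\varepsilon),\varepsilon)$, takes $I(p)$ inside this fundamental domain, and applies Lemma~\ref{l.expandedcovering} \emph{once}; the image then already covers $f_0^{-1}(I_0)\supset I(p)$ with derivative at least $\varkappa>1$, so there are no approach/return words and no compensation argument at all. You instead keep $p$ where it is and build the word as approach--blender--return, paying for the possible contraction of $f_u$ and $f_s$ by letting the blender word get long. This is a viable route (indeed closer in spirit to how the lemma is later used in Proposition~\ref{pro:homrel}), but the two justifications you give at exactly the delicate points do not hold as written. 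First, the expansion count: you assert that $|H_j|\ge\varkappa^{\,j}|H|$ ``forces $i(H)\to\infty$ as $|H|\to0$''. That implication is backwards --- a \emph{lower} bound on per-step growth only makes the recursion stop sooner, and by itself is consistent with $i(H)=1$ for every $H$. What forces $i(H)\to\infty$ is an \emph{upper} bound: each expanding-return block $0^{N(H_j)}10^{M(H_j)}$ has uniformly bounded length (since $N(H_j)\le N+1$ and $M(H_j)\le M$) and the fiber derivatives are bounded, so $|H_{j+1}|\le C\,|H_j|$ for a constant $C$; since the recursion stops only when $H_i\supset f_0^{-1}(I_0)$, hence $|H_i|\ge|f_0^{-1}(I_0)|$, one gets $i(H)\ge \log\big(|f_0^{-1}(I_0)|/|H|\big)/\log C$. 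With this correction your estimate for property (2) goes through.

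Second, the return word. You need $p\in\operatorname{int}\big(f_s(f_0^{-1}(I_0))\big)$, and you deduce it from the claim $\cO^+\big(\operatorname{int}f_0^{-1}(I_0)\big)=(0,1)$, justified by ``passing to the open interval makes all images open, so their union is exactly $(0,1)$''. This does not follow from the closed-interval covering: the open fundamental domains $f_0^j\big((f_0^{-1}(\varepsilon),\varepsilon)\big)$ do not tile $(f_0^{-1}(\varepsilon),1)$ --- they miss the shared endpoints, i.e.\ the forward $f_0$-orbit of $\varepsilon$ --- and the $f_1$-images likewise miss the reflected points $1-f_0^k(\varepsilon)$. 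So the union of the open images you exhibit can a priori miss a countable set of points, and your fixed $p$ could be one of them, in which case no small $I(p)$ fits inside a single image $f_s(f_0^{-1}(I_0))$. The statement you need is true, but requires an extra step: for instance, choose $\varepsilon$ outside the countable set of values for which the relevant backward orbits of $p$ hit an endpoint of a fundamental domain (then already some $s=0^{j_0}$ or $s=0^{j}10^{m}$ works); note that this is precisely the role of ``possibly shrinking $\varepsilon$'' in the paper's own proof. Alternatively one could invoke density of backward orbits, but in the paper that is only established afterwards (Corollary~\ref{c.minimality}), via the contracting itineraries. So: the skeleton of your proof is sound and is a legitimate alternative to the paper's reduction, but both of these patches are needed before it is complete.
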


\begin{proof}
Without loss of generality (considering some backward iterate of $p$ and possibly shrinking $\varepsilon$) we can assume that $p\in (f^{-1}_0(\varepsilon), \varepsilon]$. Let us suppose, for simplicity that $p\ne \varepsilon$ (the case $p=\varepsilon$ would require a small additional step). In such a case we can take $I(p)\subset (f^{-1}_0(\varepsilon), \varepsilon)$ and apply Lemma \ref{l.expandedcovering} to $H=I(p)$. This gives conditions (1) and (2) in the lemma. To get (3) note that we can assume that $f_{[\eta_{0}\ldots \eta_{r}]}(I(p))$ covers the
fundamental domain
$f_0^{-1}(I_0)$. Therefore 
$$
\bigcup_{j\ge 0}  f_0^j (f_{[\eta_{0}\ldots \eta_{r}]}(I(p))) \supset (\varepsilon, 1)
$$ 
and 
$$
\bigcup_{j\ge 0}  f_1\circ f_0^j (f_{[\eta_{0}\ldots \eta_{r}]}(I(p))) \supset (f_1(1), f_1(\varepsilon))=
(0,f_1(\varepsilon)).
$$ 
Since $f_1(\varepsilon)>\varepsilon$ the claim follows.
\end{proof}

\subsubsection{Contracting itineraries}\label{a.contracting}

For the contracting itineraries we will now in particular focus on (H3), which plays the role of (H4) in the previous subsection.
Recall that $c\in (0,1)$ is given by the condition $f_0^\prime (c)=1$. Note that, since $f_0'$ is decreasing, we have $f_0^\prime(f_0(c))<1$ and hence
\begin{equation}\label{e.defc}
\upsilon \eqdef \frac{1}{f_0^\prime(f_0(c))}>1.
\end{equation}
In what follows, for notational simplicity let $g_0\eqdef f_0^{-1}$ and $g_1\eqdef
f_1^{-1}$($=f_1$) and below consider the IFS generated by $\{g_0,g_1\}$.

Next lemma is a variation of \cite[Lemma 2.6]{DiaGelRam:13}, where an important difference is that
in our case $g_1$ is not expanding.

\begin{lemma}[Contracting itineraries]\label{l.contracting}
Let $H$ be a closed subinterval of $[c,f_0^2(c)]$. Then there are a subinterval
$H_0$ of $H$ and a sequence $\xi_0\dots \xi_k$ such that
$$
	g_{[\xi_0\dots \xi_k]}(H_0)\supset [f_0(c), f_0^2(c)]
	\quad\mbox{and} \quad
	|g^\prime_{[\xi_0\dots \xi_k]}(x)|\ge \upsilon
	\quad\mbox{ for every $x\in H_0$}.
$$
\end{lemma}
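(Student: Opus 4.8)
The plan is to exploit that $g_0=f_0^{-1}$ has a \emph{repelling} fixed point at $1$ (since $g_0'(1)=1/\lambda>1$) and to use $g_1$ to inject the interval into its expanding region. Because $f_0'$ is decreasing, $g_0'(y)=1/f_0'(g_0(y))$ is increasing in $y$, so $g_0'\ge 1$ exactly on $[f_0(c),1)$; the interval $[f_0(c),f_0^2(c)]=:D$ is a fundamental domain there with $g_0(D)=[c,f_0(c)]$, and
\[
	\upsilon=g_0'(f_0^2(c))=\min_{y\in[f_0^2(c),1)}g_0'(y).
\]
The role of (H3) is exactly this: $f_1\circ f_0^2(c)>f_0^2(c)$ means $f_0^2(c)<1/2$, so the whole standard domain $[c,f_0^2(c)]$ lies in $[0,1/2)$ and $g_1([c,f_0^2(c)])=[1-f_0^2(c),1-c]\subset(f_0^2(c),1)$ lands in the region where $g_0$ expands by at least $\upsilon$ and is moreover bounded away from the fixed point $1$.

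First I would define a \emph{contracting successor} for an interval $H\subset[c,f_0^2(c)]$. Put $J_0=g_1(H)\subset[1-f_0^2(c),1-c]$ and let $m^*\ge 1$ be the first integer for which the left endpoint of $g_0^{m^*}(J_0)$ drops to $\le f_0^2(c)$; such an $m^*$ exists and is $\ge 1$ because $g_0^m(J_0)$ moves monotonically towards $0$ while its left endpoint starts strictly above $f_0^2(c)$. For $m<m^*$ the interval $g_0^m(J_0)$ stays inside $(f_0^2(c),1)$, so each of these steps expands by at least $\upsilon$; one checks that the left endpoint of $g_0^{m^*}(J_0)$ lands in $(f_0(c),f_0^2(c)]$ and the left endpoint of $g_0^{m^*+1}(J_0)$ lands in $(c,f_0(c)]$, strictly below $D$. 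I would therefore set
\[
	\Phi(H)\eqdef g_0^{m^*+1}(g_1(H))=g_{[1\,0^{m^*+1}]}(H),
\]
whose derivative is bounded below by $\upsilon^{m^*}\cdot 1\ge\upsilon$ (the first $m^*$ factors are $\ge\upsilon$, the last factor is $g_0'$ evaluated to the right of $f_0(c)$, hence $\ge 1$); in particular $\lvert\Phi(H)\rvert\ge\upsilon\,\lvert H\rvert$.

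Now I would run the following dichotomy and recursion. The interval $\Phi(H)$ has its left endpoint in $(c,f_0(c)]$, i.e.\ to the left of $D$. Either its right endpoint is $\ge f_0^2(c)$, in which case $\Phi(H)\supset D$ and we are done: taking $H_0\subset H$ to be the preimage of $D$ under $\Phi$ together with the word $[1\,0^{m^*+1}]$ gives $g_{[1\,0^{m^*+1}]}(H_0)\supset D$ with derivative $\ge\upsilon$. Otherwise the right endpoint is $<f_0^2(c)$, so $\Phi(H)\subset(c,f_0^2(c))\subset[0,1/2)$ is again a legitimate input for the successor, now $\upsilon$ times longer. Iterating, the intervals $H^{(i+1)}=\Phi(H^{(i)})$ either eventually cover $D$ or remain inside $[c,f_0^2(c))$ with $\lvert H^{(i)}\rvert\ge\upsilon^{i}\lvert H\rvert$; the latter is impossible since lengths are bounded by $f_0^2(c)-c$. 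Hence after finitely many steps a successor covers $D$. Concatenating the successor words and restricting to the subinterval $H_0\subset H$ mapped onto $D$ yields the desired sequence $\xi_0\dots\xi_k$, and its derivative on $H_0$ is a product of factors each $\ge\upsilon$, hence $\ge\upsilon$.

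The main obstacle is the degeneracy $g_0'(f_0(c))=1$ at the lower edge of the expanding region. A sweep that merely brings the left endpoint \emph{into} $D$ covers only an upper portion $[\,\cdot\,,f_0^2(c)]$ and misses the stretch down to $f_0(c)$, precisely because expansion there degenerates to $1$. The remedy is the extra $g_0$ step that pushes the left endpoint strictly below $f_0(c)$ (at the harmless cost of a single factor $\ge1$), after which the image \emph{straddles} $D$. The delicate bookkeeping is to arrange that every successor output is \emph{either} a genuine cover of $D$ \emph{or} a strictly longer interval back inside the standard domain $[c,f_0^2(c))$ which never straddles $1/2$ (so that the next application of $g_1$ does not split it); this is exactly what (H3) guarantees, playing here the role that (H4) played for the expanding itineraries.
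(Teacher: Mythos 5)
Your proof is correct and follows essentially the same route as the paper's: both use (H3) to push the interval via $g_1$ into the region above $f_0^2(c)$ where $g_0'\ge\upsilon$, return by iterating $g_0$ to the fundamental domain $[f_0(c),f_0^2(c)]$ with expansion at least $\upsilon$, and then run the geometric-growth recursion of Lemma~\ref{l.expandedcovering} until some iterated successor covers that domain. The only difference is bookkeeping: the paper's return word is $010^i$ while yours is $10^{m^*+1}$, the extra application of $g_0$ making each successor straddle $[f_0(c),f_0^2(c)]$ from below so that the covering dichotomy becomes explicit.
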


\begin{proof}
Note that for every $x\in  [f_0(c), f_0^2(c)]$ it holds $|g_{[01]}^\prime (x)| \ge \upsilon$. Condition $f_1 \circ f_0^2(c)> f_0^2(c)$ implies that $g_{[01]}(x)> f_0^2(c)$. Thus,  there is a first $i\ge 0$ such that $g_{[010^i]}(x)\in   [f_0(c), f_0^2(c)]$.
Note that $|g_{[010^i]}'(x)|\ge \upsilon$. Now the result follows arguing as in  Lemma~\ref{l.expandedcovering}.
\end{proof}

Define the {\emph{backward orbit}} $\cO^-(\cdot)$ by the IFS of a set in the natural way. Arguing as in the expanding case, we have the following version of Lemma~\ref{l.c.expandingperiodic}.

\begin{lemma}
\label{l.c.contractingperiodic}
For every point $p\in (0,1)$ there are a small neighborhood $J(p)$ of $p$ and a finite sequence $(\nu_{0}\ldots \nu_{r})$, $r=r(J(p))$, such that
\begin{enumerate}
\item[(1)] $f_{[.\nu_{0}\ldots \nu_{r}]}(J(p))\supset J(p)$,
\item[(2)] $\big| \big(f_{[.\nu_{0}\ldots \nu_{r}]})^\prime (x)\big| >1$ for all $x\in J(p)$, and
\item[(3)] $\cO^-(J(p))=(0,1)$.
\end{enumerate}
\end{lemma}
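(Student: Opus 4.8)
The plan is to prove this as the exact time-reversed analogue of Lemma~\ref{l.c.expandingperiodic}, working with the inverse IFS $\{g_0,g_1\}$, $g_0=f_0^{-1}$, $g_1=f_1$, in place of $\{f_0,f_1\}$, and with the contracting itinerary Lemma~\ref{l.contracting} playing the role of Lemma~\ref{l.expandedcovering}. Recall that $f_{[.\nu_0\ldots\nu_r]}=(f_{[\nu_0\ldots\nu_r]})^{-1}=g_{[\nu_0\ldots\nu_r]}$, so that $\lvert(f_{[.\nu_0\ldots\nu_r]})'\rvert>1$ says the backward map $g_{[\nu_0\ldots\nu_r]}$ is expanding, equivalently that the forward return $f_{[\nu_0\ldots\nu_r]}$ contracts; thus a fixed point of $g_{[\nu_0\ldots\nu_r]}$ in $J(p)$ is precisely a fiber-contracting periodic point, and condition (1) is the assertion that this expanding inverse covers $J(p)$. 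The only structural difference with the expanding case is that here $g_1=f_1$ is an isometry rather than an expansion, so all the expansion must be produced inside the region $[c,f_0^2(c)]$ where $g_0$ expands; this is exactly what \eqref{e.defc} and hypothesis (H3) secure through Lemma~\ref{l.contracting}.

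First I would reduce, as in Lemma~\ref{l.c.expandingperiodic}, to the case $p\in(f_0(c),f_0^2(c))$. Since $f_0(x)>x$ on $(0,1)$, the orbit $\{f_0^k(c)\}_{k\in\bZ}$ partitions $(0,1)$ into fundamental domains of $f_0$, and any $p$ can be carried into $(f_0(c),f_0^2(c))$ by a backward word $w$ (iterates of $g_0$ for $p$ in the upper part, one $g_1=f_1$ followed by iterates of $g_0$ for $p$ near $0$), so $p=f_{[w]}(p')$ with $p'=g_{[w]}(p)$ in the good region. Setting $J(p):=f_{[w]}(J(p'))$ gives $g_{[w]}(J(p))=J(p')$, whence $\cO^-(J(p))\supseteq\cO^-(J(p'))$ and property (3) transfers verbatim; properties (1)--(2) transfer as well, because the core return word $\nu$ built at $p'$ can be prepended by $w$ and postcomposed by a fixed word $u$ with $g_{[u]}([f_0(c),f_0^2(c)])\supseteq J(p)$, and one repeats the expanding core, using $u\,\nu^k\,w$ for $k$ large, so that the factor $\upsilon^k$ from Lemma~\ref{l.contracting} dominates the bounded distortion of the fixed connecting words.

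It therefore remains to treat $p\in(f_0(c),f_0^2(c))$. I would take a small neighborhood $H\subset(f_0(c),f_0^2(c))\subset[c,f_0^2(c)]$ of $p$ and apply Lemma~\ref{l.contracting}, obtaining a subinterval $H_0\subset H$ and a word $\nu_0\ldots\nu_r$ with $g_{[\nu_0\ldots\nu_r]}(H_0)\supset[f_0(c),f_0^2(c)]$ and $\lvert g'_{[\nu_0\ldots\nu_r]}\rvert\ge\upsilon>1$ on $H_0$. I then \emph{define} $J(p):=H_0$. Since $J(p)=H_0\subset(f_0(c),f_0^2(c))$, we get $g_{[\nu_0\ldots\nu_r]}(J(p))\supset[f_0(c),f_0^2(c)]\supset J(p)$, which is (1), while the bound $\lvert g'_{[\nu_0\ldots\nu_r]}\rvert\ge\upsilon>1$ is (2); the expanding self-covering yields the desired fixed point of $g_{[\nu_0\ldots\nu_r]}$ in $J(p)$. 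For (3), arguing as at the end of Lemma~\ref{l.c.expandingperiodic} but downward, from $g_{[\nu_0\ldots\nu_r]}(J(p))\supset[f_0(c),f_0^2(c)]$ one has $\cO^-(J(p))\supset\cO^-([f_0(c),f_0^2(c)])$; sweeping by $g_0=f_0^{-1}$ gives $\bigcup_{j\ge0}g_0^j([f_0(c),f_0^2(c)])=(0,f_0^2(c)]$, so $\cO^-(J(p))$ meets every neighborhood of $0$; applying $g_1=f_1$ then yields the interval $[1-f_0^2(c),1)$ reaching up to $1$, and a final $g_0$-sweep of this interval fills all of $(0,1)$, so $\cO^-(J(p))=(0,1)$. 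This is the precise mirror of the step ``$f_1(\varepsilon)>\varepsilon$'' in the expanding proof.

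The main obstacle I anticipate is the mismatch between Lemma~\ref{l.contracting}, which covers $[f_0(c),f_0^2(c)]$ only from a \emph{subinterval} $H_0\subsetneq H$, and condition (1), which asks for the \emph{whole} neighborhood to be mapped over itself; I resolve this by renaming $J(p):=H_0$, but then I must check that $H_0$ is an honest neighborhood of $p$. The return-time function implicit in Lemma~\ref{l.contracting} (the first $i$ with $g_{[010^i]}(x)\in[f_0(c),f_0^2(c)]$) is locally constant off a countable set, so for generic $p$ a small enough $H$ carries a single itinerary and the resulting $H_0$ contains $p$ in its interior; the countably many exceptional $p$ are handled by choosing $H$ and $H_0$ on one side of $p$, exactly the ``small additional step'' already flagged in Lemma~\ref{l.c.expandingperiodic}. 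Everything else is the routine bookkeeping of composing the contracting returns, identical in spirit to Lemmas~\ref{l.expandedsucessor}--\ref{l.expandedcovering}.
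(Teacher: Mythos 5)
Your proposal is correct and follows essentially the same route as the paper: the paper's entire proof of this lemma is the remark that one argues as in the expanding case (Lemma~\ref{l.c.expandingperiodic}), with the contracting-itinerary Lemma~\ref{l.contracting} playing the role of Lemma~\ref{l.expandedcovering}, which is precisely your plan, including the concluding $g_0$/$g_1$-sweep for item (3). If anything, you supply more detail than the paper does --- notably the extra sweep needed because $(0,f_0^2(c)]$ and $[1-f_0^2(c),1)$ do not overlap, and the subinterval-versus-neighborhood issue inherited from Lemma~\ref{l.contracting}, which the paper passes over in silence.
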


\subsubsection{Almost forward and backward minimality}

\begin{corollary}[Almost minimality]\label{c.minimality}
For every $x\in (0,1)$ the sets $\mathcal{O}^+(x)$ and  $\mathcal{O}^-(x)$ are both dense in $[0,1]$.
\end{corollary}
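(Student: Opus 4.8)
The plan is to deduce the corollary from Lemmas~\ref{l.c.expandingperiodic} and~\ref{l.c.contractingperiodic} by a duality between forward and backward orbits, the point being that both $f_0$ and $f_1$ are diffeomorphisms of $[0,1]$ preserving $(0,1)$. First I would fix $x\in(0,1)$ and reduce density in $[0,1]$ to the assertion that $\mathcal{O}^+(x)$ and $\mathcal{O}^-(x)$ meet every nonempty open interval $U\subset(0,1)$. Since the orbits stay inside $(0,1)$ and $(0,1)$ is dense in $[0,1]$ this suffices, and taking $U$ near the endpoints recovers accumulation at $0$ and $1$.

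The key observation I would isolate is the following inversion duality. For a finite word $w$ the map $f_{[w]}$ is an invertible diffeomorphism of $[0,1]$ whose inverse is the corresponding composition of the maps $g_i=f_i^{-1}$; hence $z=f_{[w]}(q)$ holds if and only if $q$ is the image of $z$ under the associated composition of the $g_i$. Consequently, for every set $V$,
\[
	z\in\mathcal{O}^+(V)\iff \mathcal{O}^-(z)\cap V\ne\emptyset,
	\qquad
	z\in\mathcal{O}^-(V)\iff \mathcal{O}^+(z)\cap V\ne\emptyset ,
\]
because the family $\{f_{[w]}^{-1}\}_w$ is exactly the family of finite compositions of $g_0,g_1$ defining $\mathcal{O}^-$, and symmetrically.

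With this in hand the corollary is immediate. Given $U$ I pick $p\in U$ and apply Lemma~\ref{l.c.expandingperiodic}; as its proof allows $I(p)$ to be an arbitrarily small neighborhood of $p$ (one applies Lemma~\ref{l.expandedcovering} to an arbitrary small subinterval of the fundamental domain and transports it back to $p$), I may assume $I(p)\subset U$. Property~(3) then gives $\mathcal{O}^+(I(p))=(0,1)\ni x$, so by the first equivalence $\mathcal{O}^-(x)\cap I(p)\ne\emptyset$, whence $\mathcal{O}^-(x)\cap U\ne\emptyset$. Symmetrically, applying Lemma~\ref{l.c.contractingperiodic} (whose proof rests on Lemma~\ref{l.contracting}) to obtain $J(p)\subset U$ with $\mathcal{O}^-(J(p))=(0,1)\ni x$, the second equivalence yields $\mathcal{O}^+(x)\cap U\ne\emptyset$. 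As $U$ was arbitrary, both orbits are dense in $[0,1]$.

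The only genuine point to check---and the step I expect to be the main obstacle---is the legitimacy of taking the neighborhoods $I(p)$ and $J(p)$ inside the prescribed $U$. This requires reading off from the proofs of Lemmas~\ref{l.c.expandingperiodic} and~\ref{l.c.contractingperiodic} that the covering property~(3) holds not merely for one distinguished neighborhood but for every sufficiently small neighborhood of $p$, which is the case since the covering is produced from an arbitrary small subinterval. Everything else is the purely formal inversion duality above, which hinges only on the invertibility of the fiber maps.
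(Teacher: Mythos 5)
Your proof is correct and follows essentially the same route as the paper: the paper's proof also fixes $p$ with an arbitrarily small neighborhood, applies item~(3) of Lemmas~\ref{l.c.expandingperiodic} and~\ref{l.c.contractingperiodic}, and uses precisely your inversion duality ($x\in\mathcal{O}^-(J(p))$ iff $\mathcal{O}^+(x)\cap J(p)\ne\emptyset$, and symmetrically) to conclude. The only difference is that the paper leaves implicit the point you flag—that the covering property holds for every sufficiently small neighborhood—which you correctly justify from the proofs of those lemmas.
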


\begin{proof}
Fix  any $x\in (0,1)$. To prove the backward minimality fix $p\in (0,1)$ and an arbitrarily small neighborhood $J(p)$ of it. By Lemma~\ref{l.c.contractingperiodic} item (3) we have that $x\in \mathcal{O}^-(J(p))$ and hence $J(p) \cap \mathcal{O}^+(x)\ne\emptyset$. 
The proof of the forward minimality is analogous using Lemma~\ref{l.c.expandingperiodic} item (3).
\end{proof}

\subsection{Transitive dynamics. Homoclinic relations}

To prove that $F$ is topologically transitive, we use the notion of a 
{\emph{homoclinic class}} adapted to the skew product setting. For that we need some definitions. Observe that if  $P=(\xi,p)$ is a periodic point of
$F$ of period $k+1$, then $\xi =(\xi_0\ldots \xi_{k})^{\bZ}$ and $f_{[\xi_0\ldots\,\xi_k]}(p)=p$. Note that
\[
	\frac{1}{k+1}\log\,\lvert f_{[\xi_0\ldots\,\xi_k]}^\prime (p)\rvert 
	= \chi(P).	
\]
If $\chi (P)\ne 0$ then we call $P$ {\emph{(fiber) hyperbolic}}. There are two types of such points: if $\chi (P)>0$ then we call $P$ \emph{(fiber) expanding}, otherwise  $\chi (P)<0$  and we call $P$ \emph{(fiber) contracting}.
We denote by $\Per_{\mathrm{hyp}}(F)$ the set of all fiber hyperbolic periodic points of $F$ and by $\Per_{>0}(F)$ and $\Per_{<0}(F)$ the (fiber) expanding and (fiber) contracting periodic points, respectively. Clearly, 
 $\mathrm{Per}_{\mathrm{hyp}}(F)= \mathrm{Per}_{>0}(F)\cup
 \mathrm{Per}_{<0}(F)$.
 Given a fiber hyperbolic periodic point $P$ we consider the {\emph{stable and unstable sets}}
 of its orbit $\cO(P)$ denoted by $W^s\big( \cO (P_1), F\big)$ and $W^u\big( \cO (P_1), F\big)$. 
 
 Two periodic points $P_1,P_2 \in \mathrm{Per}_{\mathrm{hyp}}(F)$ of the same type of hyperbolicity 
 (that is, either both points are fiber expanding or both are fiber contracting)
with different orbits 
$\cO(P_1)$ and $\cO(P_2)$ are \emph{homoclinically related}
if the stable and unstable sets of their orbits intersect cyclically:
$$
	W^s\big( \cO (P_1), F\big) \cap  W^u\big( \cO (P_2), F\big) \ne\emptyset
	\,\,\mbox{and}\,\,
	W^u\big( \cO (P_1), F\big) \cap  W^s\big( \cO (P_2), F\big) \ne\emptyset.
$$
A point $X\not\in \cO(P)$ is a \emph{homoclinic point} of $P$ if 
$$
	X\in W^s\big( \cO (P), F\big) \cap  W^u\big( \cO (P), F\big).
$$
Observe that our definitions  do not involved any transversality assumption (indeed in our context of a skew product such a transversality does not make sense, see also \cite[Section 3]{DiaEstRoc:16} for more details on homoclinic relations for skew products).
However, due to the fact that the maps $f_0$ and $f_1$ have no critical points, the homoclinic points behave as the transverse ones in the differentiable
setting.  

The \emph{homoclinic class} $H(P)$ of a fiber hyperbolic periodic point $P$  is the closure of the orbits of the periodic points  of the same type as $P$ which 
are homoclinically related to $P$. As in the differentiable setting, the
set $H(P)$  coincides with the closure of
the homoclinic points of $P$. This set is transitive. 

Let us introduce some notation. For $\star \in \{<0,>0\}$, define
$$
\mathrm{Per}_{\rm{core},\star}(F)\eqdef
\mathrm{Per}_\star(F)
 \cap \Lambda_{\rm{core}}
 \quad 
 \mbox{and}
 \quad
 \mathrm{Per}_{\rm{ex},\star}(F)\eqdef
\mathrm{Per}_\star(F)
 \cap \Lambda_{\rm{ex}}.
 $$

\begin{proposition}[Homoclinic relations]\label{pro:homrel}
Let $\star \in \{<0,>0\}$.
\begin{enumerate}
\item[(1)]
Every pair of points $R_1,R_2 \in \mathrm{Per}_{\rm{core},\star}(F)$
are homoclinically related. 
\item[(2)]
Every pair of points $R_1,R_2 \in \mathrm{Per}_{\rm{ex},\star}(F)$
are homoclinically related and their homoclinic classes coincide with $\Lambda_{\rm{ex}}$.
\item[(3)]
No point in $\mathrm{Per}_{\rm{core},\star}(F)$ is homoclinically related to any point of $\mathrm{Per}_{\rm{ex},\star}(F)$.
\item[(4)]
The set $\Sigma_2\times [0,1]$ is the homoclinic class of any $R\in \mathrm{Per}_{\rm{core},\star}(F)$. As a consequence, the set $\mathrm{Per}_{\rm{core},\star}(F)$ is dense in  $\Sigma_2\times [0,1]$. 
\end{enumerate}
\end{proposition}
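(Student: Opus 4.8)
The plan is to describe the stable and unstable sets of fiber-hyperbolic periodic points and then to treat the four items, the cases $\star={>0}$ and $\star={<0}$ being symmetric under exchanging the fiber maps with their inverses (equivalently, forward with backward IFS orbits). For a fiber expanding $R=(\xi,r)$, the point $r$ is a repelling fixed point of $f_{[\xi_0\ldots\xi_k]}$; since the fiber maps are boundary-preserving diffeomorphisms without critical points, the set $W^u(\cO(R),F)$ is \emph{thick}, its fiber slices being the nondegenerate local unstable interval of $r$ carried forward, while $W^s(\cO(R),F)$ is \emph{thin}, its fiber values forming the backward IFS orbit $\cO^-(r)$. For a fiber contracting point the two roles are exchanged and the thin direction is carried by $\cO^+(r)$. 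Crucially, if $r\in(0,1)$ then, by invariance of $(0,1)$, \emph{all} fiber values occurring in $W^s(\cO(R),F)\cup W^u(\cO(R),F)$ remain in $(0,1)$, whereas if $r\in\{0,1\}$ the thin direction stays on the invariant boundary $\{0,1\}$.

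For item (1), let $R_1,R_2\in\Per_{\rm core,>0}(F)$. The local unstable interval of $r_2$ is a nondegenerate subinterval of $(0,1)$, and by Corollary~\ref{c.minimality} the backward orbit $\cO^-(r_1)$ is dense in $[0,1]$; hence it meets this interval, giving a fiber value $z\in W^u_{\rm loc}(r_2)$ and a finite word $u$ with $f_{[u]}(z)=r_1$. Taking the base sequence with past eventually $\xi^2$-periodic, then reading $u$, then eventually $\xi^1$-periodic produces a point of $W^s(\cO(R_1),F)\cap W^u(\cO(R_2),F)$, which lies in $\Lambda_{\rm core}$ by invariance of $(0,1)$. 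Exchanging the indices yields the reverse intersection, so $R_1$ and $R_2$ are homoclinically related. For contracting points the same argument applies using density of $\cO^+(\cdot)$.

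For item (2), I would invoke the conjugacy $\varpi$ of Section~\ref{sec:twinsetc} between $F|_{\Lambda_{\rm ex}}$ and the subshift of finite type $\sigma_A$, which is mixing since $A$ is irreducible and the symbols $0_L$ and $0_R$ carry self-loops. In the zero-dimensional horseshoe $\Lambda_{\rm ex}$ the stable and unstable sets of periodic points are governed by coincidence of futures and pasts, so mixing provides admissible connecting words and hence heteroclinic points lying on the boundary $\Sigma_2\times\{0,1\}$; this realizes the $F$-homoclinic relation for any two exposed periodic points of the same type. That the homoclinic class equals $\Lambda_{\rm ex}$ then reduces to the density of expanding (respectively contracting) exposed periodic points, obtained by prescribing a long central block and closing up, with the help of mixing, by a long run of the symbol $0_L$ (contributing expansion $\beta$ per step) respectively $0_R$ (contraction $\lambda$), which forces the sign of the exponent.

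Item (3) is where the core detaches from the exposed piece, and I expect the bookkeeping of which cyclic intersection vanishes to be the conceptually delicate point. By the setup above, for a core point $R$ both $W^s(\cO(R),F)$ and $W^u(\cO(R),F)$ lie in $\Sigma_2\times(0,1)$, whereas for an exposed expanding (respectively contracting) $P$ the thin set $W^s(\cO(P),F)$ (respectively $W^u(\cO(P),F)$) lies in $\Sigma_2\times\{0,1\}$, because an off-boundary point is repelled from the expanding boundary fixed point and so cannot be forward (respectively backward) asymptotic to $\cO(P)$. Pairing the core point's set with this thin boundary set gives two disjoint sets, so one of the required cyclic intersections is empty and $R,P$ cannot be homoclinically related. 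Finally, for item (4) it suffices to prove that $\Per_{\rm core,\star}(F)$ is dense in $\Sigma_2\times[0,1]$, since by item (1) this set \emph{is} the family of periodic orbits defining $H(R)$. Given $(\zeta,x)$ and a central block $\zeta_{-m}\ldots\zeta_m$, I would choose $x_{\rm start}\in(0,1)$ with $f_{[\zeta_{-m}\ldots\zeta_{-1}]}(x_{\rm start})$ close to $x$ and, using the expanding covering Lemma~\ref{l.expandedcovering} together with Lemma~\ref{l.c.expandingperiodic}, a return word $U$ for which $f_{[\zeta_{-m}\ldots\zeta_m\,U]}$ maps a small neighbourhood of $x_{\rm start}$ over itself with derivative larger than $1$ (a large power of $\varkappa$ along $U$ dominating the bounded distortion of the fixed block); the resulting expanding fixed point $p\in(0,1)$ gives a core periodic point $((\zeta_{-m}\ldots\zeta_m U)^{\bZ},p)$ whose orbit meets an $\varepsilon$-neighbourhood of $(\zeta,x)$. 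The one point requiring extra care, and which I regard as the main obstacle of the whole proposition, is the approximation of boundary points $x\in\{0,1\}$: here one takes $x_{\rm start}$ very close to the boundary and uses that forward orbits are driven towards $1$ by $f_0$ and thrown near $0$ by $f_1$, so that the constructed core orbit genuinely passes $\varepsilon$-close to the prescribed boundary fibre. The contracting case is identical after replacing forward by backward itineraries and Lemma~\ref{l.expandedcovering} by Lemma~\ref{l.contracting}.
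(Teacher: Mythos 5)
Your proposal is correct and takes essentially the same route as the paper's proof: heteroclinic points built from the almost-minimality of the IFS for item (1), the horseshoe/subshift structure of $\Lambda_{\rm ex}$ for item (2), invariance of the boundary $\Sigma_2\times\{0,1\}$ forcing the ``thin'' invariant sets of exposed saddles onto the boundary (hence disjoint from the core saddles' invariant sets, which stay in $\Sigma_2\times(0,1)$) for item (3), and the expanding/contracting covering lemmas producing core periodic points near arbitrary points, combined with item (1), for item (4). The only step you single out as the main obstacle---approximating points of $\Sigma_2\times\{0,1\}$ by core periodic orbits---needs no special argument: periodic points dense in $\Sigma_2\times(0,1)$ are automatically dense in $\Sigma_2\times[0,1]$, and the homoclinic class is closed, which is exactly how the paper disposes of the boundary case.
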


\begin{proof}
As the arguments in this proof are similar to the ones in \cite[Section 2]{DiaGel:12} we willl just sketch them. We prove (1) for fiber contracting periodic points only. Fix $P= ((\xi_0\ldots \xi_k)^\mathbb{Z}, p)$ and $R= ((\eta_0\ldots \eta_\ell)^\mathbb{Z}, r)$, $r,p\in (0,1)$. Take an open interval $I(r)$ containing $r$ such that $I(r)\subset W^s_{\mathrm{loc}} (r, f_{[\eta_0\ldots \eta_\ell]})$. By Corollary~\ref{c.minimality}, there is $\rho_0\ldots \rho_m$ such that $f_{[\rho_0\ldots \rho_m]} (p)\in I(r)$.
Take $X= ( (\xi_0\ldots \xi_k)^{-\NN}. \rho_0\ldots \rho_m (\eta_0\ldots \eta_\ell)^\NN,p)$.
By construction, $X\in W^u(\mathcal{O}(P),F) \cap W^u(\mathcal{O}(R),F)$. Reversing the roles of $P$ and $R$ we obtain a point in 
$W^u(\mathcal{O}(R),F) \cap W^s(\mathcal{O}(P),F)$, proving that $P$ and $R$ are homoclinically related.

The proof of (2) is an immediate consequence of the fact that $F_{\Lambda_{\rm ex}}$ can be seen as an ``abstract horseshoe''. 

To prove item (3) note that $\cO^\pm(0),\cO^\pm(1)\subset\{0,1\}$. This prevents any  periodic point with fiber coordinate $0$ or $1$ to be homoclinically related to points in $\Lambda_{\mathrm{core}}$. 

We prove item (4) for expanding points only.
Fix an expanding periodic point $R\in \Lambda_{\mathrm{core}}$. 
Consider any point $X=(\xi,x)$, $x\in (0,1)$. Fix $m\ge$ and $\delta>0$ and consider $(\xi_{-m}\ldots \xi_m)$ and $I(\delta)=
(x'-\delta,x'+\delta)$, where $x'= f_{[\xi_{-m}\ldots \xi_{-1}.]}(x)$. Consider now $I'(\delta)=f_{[\xi_{-m}\ldots \xi_0\ldots \xi_m]} (I(\delta))$.
Applying Lemma~\ref{l.c.expandingperiodic}  to $I(\delta)'$, we get $\eta_0\ldots \eta_r$ such that
$f_{[\eta_0\ldots\eta_r]} (I(\delta)')$ covers $I(\delta)$ and 
$f_{[\xi_{-m}\ldots \xi_0\ldots \xi_m\eta_0\ldots \eta_r]}$ is expanding on $I(\delta)'$. This provides an expanding periodic point $P_{\delta,m}$ close to $X$. 
Note that $P_{\delta,m}\to X$ as $\delta\to 0$ and $m\to \infty$.
By item (1) this point is homoclinically related to $R$. As a consequence,  we have $X\in H(R,F)$.
\end{proof}

\subsection{The parabolic case}
\label{a.parabolic}

In this section we will prove Theorem~\ref{theoremA8a}. For that we see how the constructions above can be modified to construct examples where the set $\Lambda$ has an ergodic measure of maximal entropy which is nonhyperbolic. For this we modify the map $f_0$  satisfying conditions (H1), (H3), and (H4) to get a new map $\tilde f_0$ such that the points $0$ and $1$ are parabolic ($0$ is repelling and $1$ is attracting) and consider the skew product $\widetilde F$ associated to $\tilde f_0$ and $f_1(x)=x-1$.

\begin{proof}[Proof of Theorem~\ref{theoremA8a}]
We start with a map $f_0$ satisfying hypotheses (H1), (H3), and (H4) 
and consider exactly as in Appendix~\ref{a.underlying} the fundamental domains
$I_0(\epsilon)=[\varepsilon,f_0(\varepsilon)]$ and $I_1(\epsilon)=[1-\varepsilon, f_0(1-\varepsilon))$ (for small $\varepsilon>0$) 
and the natural number $N(\varepsilon)$ with $f_0^{N(\varepsilon)} (I_0(\varepsilon))=I_1(\varepsilon)$. Note that the estimate in \eqref{e.expansion}
holds. We define for a subinterval $H$ of $f_0^{-1}(I_0(\varepsilon)) \cup I_0(\varepsilon)$ the number $N(H)\in \{N(\varepsilon), N(\varepsilon)+1\}$. 
Similarly we define $M(H)\in \{0,\dots, M\}$ ($M$ independent of $H$). 

Assume not that $f_1$ satisfies (H2'). Let 
$$
 a_1 \eqdef\min\{f_0^{-1}(\varepsilon),1-f_0^{N(\varepsilon)+1} (\varepsilon)\},
 \quad
 b_1\eqdef f_0^{N(\varepsilon)+1} (\varepsilon).
 $$
Note also that the definition of the expanding successors only involves iterates in the set
$$
[f_0^{-1}(\varepsilon), f_0^{N(\varepsilon)+1} (\varepsilon)] \cup f_1 \big([f_0^{-1}(\varepsilon), f^{N(\varepsilon)+1} (\varepsilon)]\big)=
[a_1,b_1]\subset [\delta, 1-\delta],
 $$
 for some small $\delta>0$. We now fix very small $\tau\ll\delta$ and consider a new map $\tilde f_0$ such that
 \begin{itemize}
\item [(i)] $\tilde f_0=f_0$ in $[\delta,1-\delta]$, 
\item [(ii)] $(\tilde f_0)'(0)=1$ and $0$ is repelling, 
\item[(iii)] $(\tilde f_0)'(1)=1$ and $1$ is attracting, $\tilde f_0$ has no fixed points in $(0,1)$,
\end{itemize}
 see Figure~\ref{Fig:parab}.
Note that for this new map $\tilde f_0$ we can define expanding returns in $I_0(\varepsilon)$ as before. Note also that every point $x\in (0,1)$ has some forward and some backward iterate in $I_0(\varepsilon)$  by the IFS associated to $\{\tilde f_0, f_1\}$ (here we use that $0$ is repelling,  $1$ is attracting and $\tilde f_0$ has no fixed points in $(0,1)$. We now have versions of Lemmas~\ref{l.expandedsucessor}, \ref{l.expandedcovering}, and \ref{l.c.expandingperiodic} for the IFS associated to $\{\tilde f_0, f_1\}$. This concludes the part corresponding to the expanding itineraries.
 
\begin{figure}[h] \begin{overpic}[scale=.35,bb=0 0 330 330]{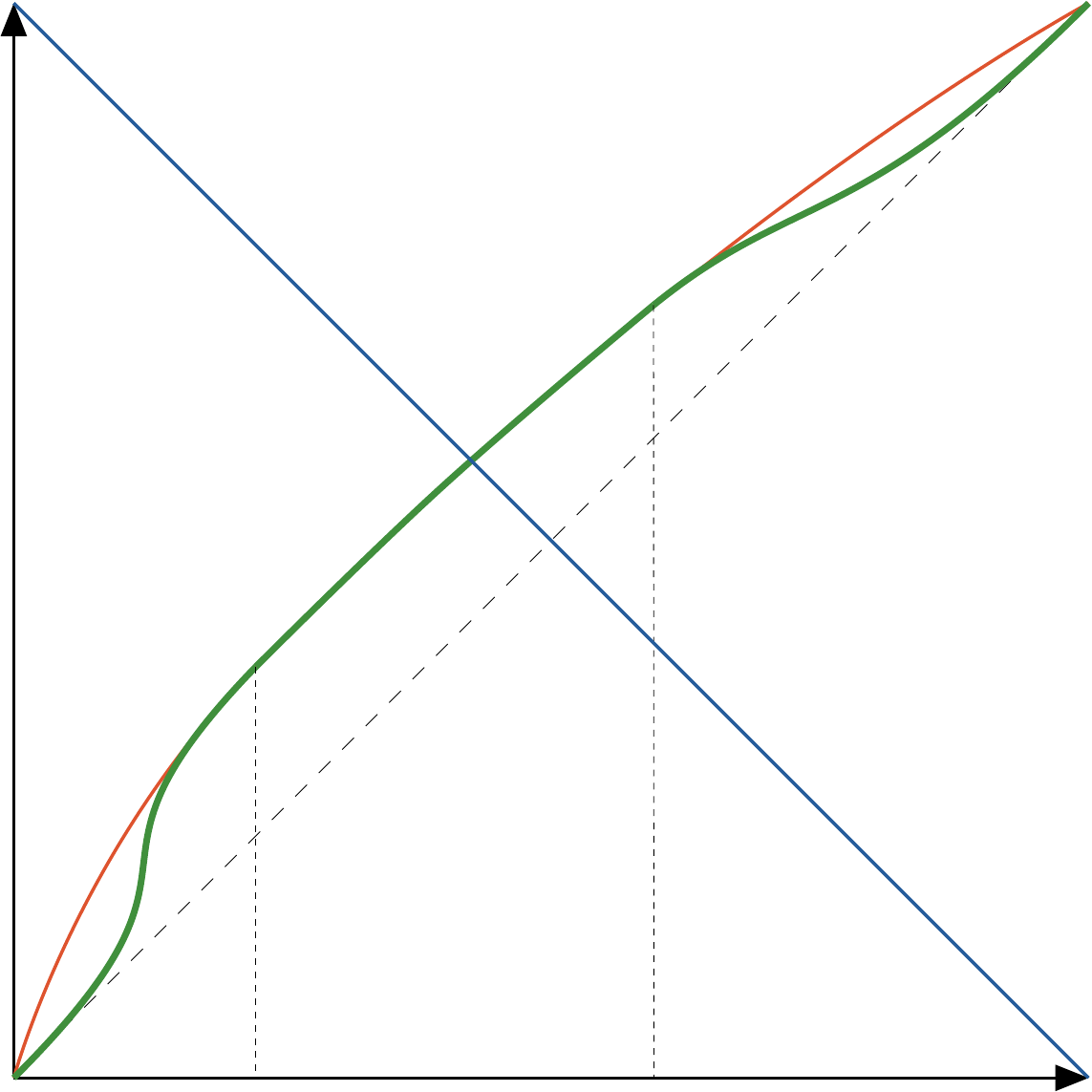}
 \put(73,92){$f_0$}
 \put(85,75){$\tilde f_0$}
 \put(80,28){$f_1$}
 \put(0,-10){$0$}
 \put(96,-10){$1$}
 \put(20,-10){$\delta$}
 \put(50,-10){$1-\delta$}
\end{overpic}
\caption{Fiber maps: The parabolic case}
\label{Fig:parab}
\end{figure}

It remains to check that the arguments corresponding to the contracting itineraries in Appendix~\ref{a.contracting} also hold.
 Recall the definition of the point $c$ in hypothesis (H3), see also \eqref{e.defc}.  
 Let $g_0=f_0^{-1}$ and $g_1=f_1^{-1}$. 
 Note that if $\delta$ is small we can assume that 
 $\delta< g_0(c)< 1-g_0(c)]<1-\delta$.
Note that for closed intervals $H\subset [c,f_0^2(c)]$ the definition of their expanding successor only involves
 iterations in the set
 $[g_0(c), 1-g_0(c)]$. Since in this interval $\tilde g_0=g_0$ we obtain versions of Lemmas~\ref{l.contracting} and
\ref{l.c.contractingperiodic} for  the IFS associated to $\{\tilde g_0, g_1\}$. In the same way we recover Corollary~\ref{c.minimality}
for the  IFS associated to $\{\tilde f_0, f_1\}$.

We can now consider the skew product $\widetilde F$ associated to $\tilde f_0, f_1$ and prove Proposition~\ref{pro:homrel} for $\tilde F$,
obtaining, in particular, that the set $\Sigma_2\times [0,1]$ is a homoclinic class of $\tilde F$.
By Theorem~\ref{the:1} the unique measure $\mu_{\rm max}^{\rm ex}$ of maximal entropy $\log2$ in $\cM_{\rm erg}(\Lambda_{\rm ex})$ 
is nonhyperbolic. 
\end{proof}

\subsection{Nontransitive case with a unique measure of maximal entropy}\label{app:bad}

In this section we prove Theorem~\ref{theoremA8b} by presenting an example  which is not transitive and for which there exists just one measure of maximal entropy, which is nonhyperbolic. This measure is supported on $\Lambda_{\rm ex}$ and there is no measure of maximal entropy in $\Lambda_{\rm core}$.

\begin{proof}[Proof of Theorem~\ref{theoremA8b}]
Let us consider a $C^1$ orientation preserving homeomorphism $\phi \colon \bR \to (0,1)$ satisfying
\begin{equation}\label{eq:sym}
\phi(y) = 1-\phi(-y)
\end{equation}
and
\[
\lim_{y\to\infty} \frac {\phi'(y+1)} {\phi'(y)} \eqdef 1 \in (0,\infty).
\]
(For example, $\phi(y) = \frac 1 \pi \arctan y + \frac 12$ satisfies the conditions above.) Now define $f_0\colon[0,1]\to[0,1]$
\[
	f_0(x) \eqdef
				\begin{cases} 
				\phi(\phi^{-1}(x)+1)&\text{ if }x\in(0,1),\\
				0&\text{ if }x=0,\\
				1&\text{ if }x=1,
				\end{cases}
\]
and $f_1\colon[0,1]\to[0,1]$ by $f_1(x)=1-x$.
Note that $f_0$ is a $C^1$ map which satisfies 
\begin{equation}\label{eq:ufffa}
	f_0'(0)=1
	= f_0'(1).
\end{equation}
Moreover, note that $ f_0\circ\phi=\phi\circ\theta$, where $\theta\colon\bR\to\bR$ denotes the unit translation on the real line defined by $\theta(y)\eqdef y+1$. The symmetry assumption \eqref{eq:sym}  means that $ f_1\circ\phi=\phi\circ\gamma$ where $\gamma\colon\bR\to\bR$ is defined by $\gamma(y)\eqdef -y$. 
Observe that $\phi(-\phi^{-1}(x))=1-x$ implies 
\begin{equation} \label{eqn:comm}
f_0 f_1 = f_1 f_0^{-1},
\end{equation}
that is, $f_0$ is conjugate to its inverse by $f_1$. This provides us fiber maps $f_0,f_1$ satisfying item 1. and 2. in the theorem.

\begin{proposition}
 $F$   is not topologically transitive.
\end{proposition}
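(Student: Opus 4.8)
The plan is to produce a continuous, nonconstant, $F$-invariant function on the forward-invariant open core $\Lambda_{\rm core}=\Sigma_2\times(0,1)$; its existence is incompatible with topological transitivity. The construction uses only the conjugacy relations $f_0\circ\phi=\phi\circ\theta$ and $f_1\circ\phi=\phi\circ\gamma$ established above, where $\theta(y)=y+1$ and $\gamma(y)=-y$. On the cover $\bR$ the two fiber maps become the standard generators of the infinite dihedral group, so every finite composition of them has the form $y\mapsto\pm y+n$ with $n\in\bZ$. The quantity preserved by all such maps is the distance to the nearest integer, and this is what I would pull back through $\phi$.

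Concretely, I would set $G(\xi,x)\eqdef\dist(\phi^{-1}(x),\bZ)\in[0,\tfrac12]$ for $(\xi,x)\in\Lambda_{\rm core}$. Since $\phi^{-1}\colon(0,1)\to\bR$ is a homeomorphism and $y\mapsto\dist(y,\bZ)$ is continuous and onto $[0,\tfrac12]$, the map $G$ is continuous and surjects onto $[0,\tfrac12]$. The \emph{key computation} is $F$-invariance: writing $y=\phi^{-1}(x)$, the two relations give $\phi^{-1}(f_0(x))=\theta(y)=y+1$ and $\phi^{-1}(f_1(x))=\gamma(y)=-y$, so $\phi^{-1}(f_{\xi_0}(x))$ equals $\theta(y)$ or $\gamma(y)$ according to $\xi_0$; as $\dist(\,\cdot\,,\bZ)$ is invariant under integer translation and under $y\mapsto -y$, one obtains $G(F(\xi,x))=G(\xi,x)$. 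Because $f_0,f_1$ map $(0,1)$ into itself, the core is forward invariant, so $G$ is constant along entire forward orbits in $\Lambda_{\rm core}$.

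To conclude non-transitivity I would argue directly with the open-set criterion. Choose two disjoint nonempty open subintervals $N_1,N_2\subset(0,\tfrac12)$ and put $U_i\eqdef G^{-1}(N_i)$. Each $U_i$ is nonempty (as $G$ is onto) and open in $\Sigma_2\times[0,1]$, being open in the open subset $\Lambda_{\rm core}$. Since $\Lambda_{\rm core}$ is forward invariant and $G$ is $F$-invariant, every $Z\in U_1$ satisfies $G(F^n(Z))=G(Z)\in N_1$, hence $F^n(U_1)\subseteq U_1$ for all $n\ge0$; therefore $F^n(U_1)\cap U_2\subseteq U_1\cap U_2=G^{-1}(N_1\cap N_2)=\emptyset$. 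Thus no forward iterate of $U_1$ meets $U_2$, and $F$ is not topologically transitive.

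The one point needing care — and what I would flag as the main obstacle — is the boundary $\Lambda_{\rm ex}=\Sigma_2\times\{0,1\}$, to which $G$ does \emph{not} extend continuously, since $\dist(\phi^{-1}(x),\bZ)$ oscillates as $x\to0,1$. This is sidestepped automatically by using the open-set formulation above: $U_1,U_2$ live inside the invariant open core, and the argument never requires $G$ on $\Lambda_{\rm ex}$. (Equivalently, one could note that any candidate dense orbit must enter the core, where it carries a single constant value of $G$, contradicting density because $G$ is onto $[0,\tfrac12]$.)
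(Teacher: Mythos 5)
Your proof is correct, and it takes a genuinely different route from the paper's. The paper argues pointwise: using $f_1\circ f_1=\id$ and the relation $f_0\circ f_1=f_1\circ f_0^{-1}$ from~\eqref{eqn:comm}, it reduces any word in $f_0,f_1$ applied to $x\in(0,1)$ to either $f_0^j(x)$ or $f_0^j(1-x)$ for some $j\in\bZ$, so the full forward IFS orbit $\cO^+(x)$ is contained in two $f_0$-orbits whose only accumulation points are $0$ and $1$, hence is never dense; this rules out a dense orbit for $F$. You exploit exactly the same algebraic structure (the semigroup generated by $\theta,\gamma$ is the infinite dihedral group), but instead of computing orbits you pull back its complete invariant $\dist(\cdot\,,\bZ)$ through $\phi$ to obtain a continuous, nonconstant, $F$-invariant function $G$ on the open fully invariant set $\Lambda_{\rm core}$, and then conclude via the open-set criterion with $U_i=G^{-1}(N_i)$. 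Both proofs hinge on~\eqref{eqn:comm}; what differs is the packaging, and each has its merits. The paper's computation is elementary, self-contained, and phrased in the IFS language used throughout the appendix. Yours is structurally stronger: it exhibits a ``first integral'', so the core decomposes into uncountably many disjoint invariant level sets $G^{-1}(c)$, showing that $F$ fails to be transitive even when restricted to $\Lambda_{\rm core}$, and it avoids the combinatorial word reduction (eliminating blocks $11$ and counting the parity of $1$s). Your treatment of the boundary is also sound: since $\Lambda_{\rm core}$ is open and invariant, the sets $U_1,U_2$ are open in $\Sigma_2\times[0,1]$ and the argument never needs $G$ on $\Lambda_{\rm ex}$, where it indeed has no continuous extension; and since the core is invariant in both time directions, the same reasoning excludes two-sided transitivity as well.
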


\begin{proof}
It suffices to prove that for any $x\in [0,1]$ the set $\cO^+(x)\eqdef\{f_{[\omega_1\ldots\,\omega_n]}(x)\colon \omega_i\in \{0,1\}, i\in\{1,\ldots,n\}\}$ is not dense in $[0,1]$. This is obvious if $x\in\{0,1\}$. 
Thus, in what follows, we let $x\in(0,1)$.

Given $n\in\bN$, consider some finite sequence $(\omega_1\ldots\omega_n)\in\{0,1\}^n$. First, recall that $f_1^2(x)=x$ we can replace this sequence by one in which we eliminated all blocks $11$. Hence, without loss of generality, we can assume that the sequence $(\omega_1\ldots\omega_n)$ does not contain two consecutive $1$s.
Assume first that this sequence contains an even number of symbols $1$, that is, we can divide it into a finite number of pieces of the form $0^k10^\ell 10^m$. By \eqref{eqn:comm}, $f_{[0^k10^\ell 10^m]}(x) = f_0^{k+m-\ell}(x)$. Hence, we have $f_{[\omega_1\ldots\omega_n]}(x)=f_0^j(x)$ for some integer $j$.
Similarly, if this sequence contains an odd number of symbols $1$, we can write $\omega_1\ldots\omega_n = 0^k 1 \omega'$ with $\omega'$ containing an even number of symbols $1$. As $f_{[0^k1]}(x) = f_0^{-k}(1-x)$, applying the previous argument, we have that $f_{[\omega_1\ldots\omega_n]}(x)=f_0^j(1-x)$ for some integer $j$.

This proves that the full forward orbit of $x$ by the IFS, $\cO^+(x)$, is contained in two sets $\{f_0^j(x)\colon j\in\bZ\}$ and $\{f_0^j(1-x)\colon j\in\bZ\}$, each of which has just two accumulation points: 0 and 1. This proves the proposition.
\end{proof}

\begin{proposition}
 $F$  has a unique measure of maximal entropy, which is nonhyperbolic.
\end{proposition}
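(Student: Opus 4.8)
The plan is to reduce the classification of maximal measures to a question about the fiberwise random walk, and then to exploit the conjugacy \eqref{eqn:comm} to analyze that walk. First I would record that $h_{\rm top}(F)=\log 2$: the factor map $\pi\colon\Sigma_2\times[0,1]\to\Sigma_2$ forces $h_{\rm top}(F)\ge\log 2$, while, since $f_0,f_1$ are homeomorphisms of $[0,1]$ and hence each nonautonomous fiber composition has zero topological entropy, the relative variational principle of \cite{LedWal:77} gives $h_{\rm top}(F)\le h_{\rm top}(\sigma)=\log 2$. The same two ingredients show that an $F$-invariant measure $\mu$ satisfies $h_\mu(F)=\log 2$ if and only if $\pi_\ast\mu=\widehat\nu_{\rm max}$, the $(\tfrac12,\tfrac12)$-Bernoulli measure (the unique measure of maximal entropy of $\sigma$): indeed $h_\mu(F)\ge h_{\pi_\ast\mu}(\sigma)$ always, and $h_\mu(F)\le h_{\pi_\ast\mu}(\sigma)$ because the fiber entropy vanishes. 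Thus the measures of maximal entropy of $F$ are exactly the $F$-invariant measures projecting to $\widehat\nu_{\rm max}$.

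The candidate is $\mu_{\rm max}^{\rm ex}$. I would first check it really is maximal: it projects to $\widehat\nu_{\rm max}$ by the identity $\widehat\nu_{\rm max}=\Pi_\ast\nu^{\rm ex}_{\rm max}$ from Section~\ref{sec:twinsetc}, so $h_{\mu_{\rm max}^{\rm ex}}(F)=\log 2$. Its exponent is zero: inserting $f_0'(0)=f_0'(1)=1$ (see \eqref{eq:ufffa}) and $|f_1'|\equiv1$ into the formula of Theorem~\ref{the:1} gives $\chi(\mu_{\rm max}^{\rm ex})=0$, so it is nonhyperbolic. It then remains to prove uniqueness, that is, that no invariant measure projecting to $\widehat\nu_{\rm max}$ charges the core $\Sigma_2\times(0,1)$.

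For this I would pass through the conjugacy $\phi$, under which $f_0$ becomes the translation $\theta(y)=y+1$ and $f_1$ the reflection $\gamma(y)=-y$, so that a forward fiber orbit $x_n=f^n_\xi(x_0)$ corresponds to the random walk $y_n=T_{\xi_{n-1}}\circ\cdots\circ T_{\xi_0}(y_0)$ with $T_0=\theta$, $T_1=\gamma$, and i.i.d.\ $\widehat\nu_{\rm max}$-distributed symbols. Writing $y_n=\varepsilon_n y_0+c_n$ with $\varepsilon_n=(-1)^{R_n}$ (where $R_n$ counts the symbols $1$) and $c_n\in\bZ$, a short computation shows that $c_n$ changes by $+1$ on each symbol $0$ and negates on each symbol $1$; grouping the runs of $0$'s between consecutive reflections exhibits $|c_n|$, up to a bounded error from the current incomplete run, as the modulus of an alternating sum of i.i.d.\ geometric run-lengths, that is, as a mean-zero, finite-variance, null-recurrent random walk on $\bZ$ with $|c_n|\asymp\sqrt n$. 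The decisive consequence is that the occupation fraction of any finite set of integers tends to $0$ almost surely; equivalently, for $\widehat\nu_{\rm max}$-a.e.\ $\xi$ and every $x_0\in(0,1)$ the empirical measures $\tfrac1n\sum_{k=0}^{n-1}\delta_{x_k}$ give asymptotically no mass to any compact subset of $(0,1)$, so every weak$\ast$ limit is supported on $\{0,1\}$. Applying this with a test function $\psi\ge0$ positive on $(0,1)$ and vanishing on $\{0,1\}$, Birkhoff's theorem forces $\int\psi\,d\mu=0$ for every ergodic maximal measure $\mu$, whence $\mu(\Sigma_2\times(0,1))=0$ and $\mu$ is supported on $\Lambda_{\rm ex}$. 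Since $F|_{\Lambda_{\rm ex}}$ is conjugate to the transitive subshift $\sigma_A$, which has a unique measure of maximal entropy, this forces $\mu=\mu_{\rm max}^{\rm ex}$, giving uniqueness.

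The main obstacle is the null-recurrence input in the third paragraph: because $c_n$ is recurrent it returns to every bounded region infinitely often, so the real point is to show that it does so with asymptotic density zero. I expect to establish this either from the local limit theorem for $c_n$ (the number of returns to a site by time $n$ is $O(\sqrt n)$, hence $o(n)$) together with an almost-sure upgrade, or by invoking the ratio ergodic theorem for the null-recurrent walk. A secondary point to handle with care is the passage from a statement for a fixed initial fiber point $x_0$ and a.e.\ $\xi$ to one about $\mu$-generic points $(\xi,x)$; this is legitimate because the occupation estimate holds for \emph{every} $x_0\in(0,1)$ and $\pi_\ast\mu=\widehat\nu_{\rm max}$, so $\mu$-almost every point has a $\widehat\nu_{\rm max}$-typical base sequence.
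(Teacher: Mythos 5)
Your proposal follows essentially the same route as the paper's proof: reduce via the Ledrappier--Walters relative variational principle to invariant measures projecting to the $(\tfrac12,\tfrac12)$-Bernoulli measure, conjugate the fiber dynamics by $\phi$ so that $f_0,f_1$ become the translation and the reflection on $\bR$, and show that the induced mean-zero, null-recurrent walk on $\bZ$ (obtained by grouping the runs of $0$'s between reflections) has vanishing occupation density of bounded sets --- which is precisely what the paper establishes, via the Chung--Erd\"os theorem together with a return-time/law-of-large-numbers argument, before transferring the estimate back to all times and to arbitrary starting measures by the translation structure. The one place the paper is more careful than your sketch is exactly the obstacle you flag, plus the time change between the induced walk and the original one (your ``bounded error from the current incomplete run'' is not bounded, only a.s.\ finite with geometric tails, and the paper devotes a separate independence lemma and a three-factor limsup estimate to this); but these are details inside the same strategy, not a different argument.
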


\begin{proof}
	By Theorem \ref{the:1}, the measure $\mu_{\rm max}^{\rm ex}$ is unique and nonhyperbolic by our choice \eqref{eq:ufffa}. Hence, it is enough to prove that there cannot exist a measure of maximal entropy supported on $ \Sigma_2\times(0,1)$.

Arguing by contradiction, assume that such a measure exists, denote it by $\mu$. Its projection to $\Sigma_2$ must be the measure $\widehat\nu_{\rm max}$ of maximal entropy for $\sigma\colon\Sigma_2\to\Sigma_2$, that is,  the $(1/2,1/2)$-Bernoulli measure (recall Section~\ref{sec:twinsetc}). We shortly write $\nu=\widehat\nu_{\rm max}$.
The measure $\mu$ admits a disintegration, that is, there exists a family $\{\mu_\xi\colon\xi\in\Sigma_2\}$ of probabilities such that $\xi\mapsto\mu_\xi$ is measurable and every $\mu_\xi$ is supported on $\{\xi\}\times(0,1)$ and  satisfies
\[
	\mu (E)
	= \int \mu_\xi(E)\, d\nu(\xi)
\] 
for any measurable set $E$.
With a slight lack of precision we will consider each $\mu_\xi$ as a measure on $(0,1)$.

To investigate what happens to $\mu_\xi$ under our dynamics, we will use the following result whose proof we postpone. Recall our notation $f_\omega^n\eqdef f_{\omega_{n-1}}\circ\ldots\circ f_{\omega_0}$.

\begin{lemma} \label{lemprop:randwalk}
For $\nu$-almost every $\omega\in\Sigma_2$, for every $\varepsilon>0$ and for every measure $\mu$ supported on $(0,1)$ we have
\[
\lim_{n\to\infty} \frac 1n \sum_{i=1}^n (f_{\omega}^i)_\ast \mu((\varepsilon, 1-\varepsilon)) =0.
\]
\end{lemma}

We postpone the proof of the above lemma to the following subsection.
Assuming that the above lemma was proven, we can now complete the proof of the theorem.
In particular, we can, for a $\nu$-generic $\omega$, apply Lemma \ref{lemprop:randwalk} to the measure $\mu = \mu_\omega$. Thus, recalling that $\mu$ is $F$-invariant, for every $n\ge1$ we have
\[\begin{split}
	\mu( \Sigma_2\times(\varepsilon, 1-\varepsilon)) 
	&= F_\ast\mu( \Sigma_2\times(\varepsilon, 1-\varepsilon)) 
	= \frac 1n \sum_{i=1}^n (F^i)_\ast\mu
		( \Sigma_2\times(\varepsilon, 1-\varepsilon))\\
	&= \frac 1n \sum_{i=1}^n \int (f_\omega^i)_\ast(\mu_\omega)((\varepsilon, 1-\varepsilon)) d\nu(\omega)\\
	&=\int 
		\frac 1n \sum_{i=1}^n (f_\omega^i)_\ast(\mu_\omega)((\varepsilon, 1-\varepsilon)) d\nu(\omega).
\end{split}\]
Now, by Lemma~\ref{lemprop:randwalk}, taking the limit $n\to\infty$ and apply the dominated convergence theorem, we obtain $\mu( \Sigma_2\times(\varepsilon, 1-\varepsilon)) =0$.
As $\varepsilon$ is arbitrary, this implies $\mu(\Sigma_2\times(0,1))=0$, contradiction to the fact that we assumed that $\mu$ was supported on $\Sigma_2\times(0,1)$. 
This finishes the proof of the proposition.
\end{proof}

This proves the theorem.
\end{proof}

\subsubsection{Random walks -- Proof of Lemma~\ref{lemprop:randwalk}}
To proof Lemma~\ref{lemprop:randwalk} we need to introduce several auxiliary objects in order to reduce it to well-known results.
Heuristically, a $\nu$-typical $\omega\in\Sigma_2$ can be treated as a random process with no memory, and then the dynamics generated by $f_\omega^i$ is given by a certain random walk. The result we will prove below is a version of a well-known statement that a random walk does not stay in any bounded region. 

For what we study below, we will consider the one-sided shift space $\Sigma_2^+$ only and by a slight abuse of notation continue to denote the $(1/2,1/2)$-Bernoulli measure on it by $\nu$. We consider a $\nu$-typical $\omega = (\omega_1\omega_2\ldots)\in \Sigma_2^+$ and interpret the values $\omega_i$ as random variables, with $\nu$ giving their joint distribution. That is, each $\omega_i$ takes values 0 and 1 with probabilities 1/2 each, independently of any other $\omega_j$'s. Denoting $\omega^n=\omega_1\ldots\omega_n$, let $\Omega_n$ be the $\sigma$-algebra generated by the cylinders $[\omega^1],\ldots,[\omega^n]$.

We first introduce the following auxiliary IFS of maps $g_0,g_1$. Let $\Omega=(0,1)\times\{+1,-1\}$ and define $g_0,g_1\colon\Omega\to\Omega$ by 
\[
	g_0(x,+1)\eqdef (f_0(x),+1),\quad
	g_0(x,-1)\eqdef (f_0^{-1}(x),-1),
\]
and 
\[
	g_1(x,+1)\eqdef (x,-1),\quad
	g_1(x,-1)\eqdef (x,+1).
\]
Consider the projections $\pi_1,\pi_2\colon\Omega\to(0,1)$ defined as follows
\[
	\pi_1(x,k) \eqdef 
	\begin{cases}
	x&\text{ if }k=+1,\\
	1-x&\text{ if }k=-1,
	\end{cases}
\quad\quad
	\pi_2(x,k) \eqdef x.
\]
One immediately checks that $\pi_1\circ g_i = f_i\circ \pi_1$, $i=0,1$, that is, the  original IFS $\{f_0,f_1\}$ on $(0,1)$ is a factor of the IFS $\{g_0, g_1\}$ on $\Omega$ under $\pi_1$. Note that, given $x\in (0,1)$, for every $n\ge1$ we have
\begin{equation}\label{eq:heree}
	(\pi_1\circ g_{[\omega^n]})(x,+1) = f_{[\omega^n]}(x),
\end{equation}
that is, we can consider  $(0,1)$ as $(0,1)\times \{+1\}$, apply the maps $g_i$ instead of $f_i$ and then project back the results by $\pi_1$ and get the same result as if we applied maps $f_i$ and never left $(0,1)$. Indeed, this is a consequence of our symmetry assumptions on $f_0$ and~\eqref{eqn:comm}.

To model the claimed random walk, we consider now $(0,1)\times \{+1\}$  instead of $(0,1)$ and apply the maps $g_i$ instead of $f_i$ and then project the results by $\phi^{-1}\circ\pi_2$. That is, let
\[
	R_i(x)
	=  R_i(x,\omega) 
	\eqdef (\phi^{-1} \circ\pi_2 \circ g_{[\omega^{i}]})(x,+1).
\]
This defines a random walk on $\bR$. The main aim of this section is to proof the following result.

\begin{lemma}\label{lemcor:last}
	For every probability measure $\mu$ on $\bR$ and for any bounded $A\subset \bR$, $\nu$-almost surely we have
\[
	\lim_{n\to\infty} \frac 1n\sum_{i=1}^{n}(R_i)_\ast\mu(A)=0.
\]
\end{lemma}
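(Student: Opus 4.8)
The plan is to exploit the fact that, read in the coordinate $y=\phi^{-1}(x)$, each $R_i$ is simply a \emph{translation} of $\bR$. Since $f_0\circ\phi=\phi\circ\theta$ with $\theta(y)=y+1$, the map $f_0$ (resp. $f_0^{-1}$) acts on the $\pi_2$--coordinate as $y\mapsto y+1$ (resp. $y\mapsto y-1$) when the auxiliary sign is $+1$ (resp. $-1$), while $g_1$ fixes the $\pi_2$--coordinate and only flips the sign. Hence the net displacement $D_i=D_i(\omega)$ of the $\pi_2$--coordinate after reading $\omega^i$ depends only on $\omega$, not on the base point, and after conjugating by $\phi$ the map $R_i$ becomes the translation $y\mapsto y+D_i(\omega)$. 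Thus $(R_i)_\ast\mu(A)=\mu(A-D_i(\omega))$, and the claim reduces to showing
\[
	\frac1n\sum_{i=1}^n \mu\big(A-D_i(\omega)\big)\xrightarrow[n\to\infty]{}0
	\qquad\text{for }\nu\text{-a.e. }\omega.
\]
Since $\mu$ is a probability measure and $A$ is bounded, a routine tightness argument (choosing $M$ with $\mu(\bR\setminus[-M,M])$ small, so that terms with $\lvert D_i\rvert$ large are negligible) reduces this to the occupation estimate: for every $M>0$,
\[
	\frac1n\,\#\{1\le i\le n:\lvert D_i(\omega)\rvert\le M\}\xrightarrow[n\to\infty]{}0
	\qquad\text{for }\nu\text{-a.e. }\omega.
\]

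Next I would make $D_i$ explicit through the block decomposition $\omega=0^{a_0}1\,0^{a_1}1\,0^{a_2}1\cdots$, where the $a_k$ are i.i.d. geometric with $\bP(a_k=j)=2^{-(j+1)}$ and $\bE[a_k]=1$. Between consecutive $1$'s the sign is constant, so the $\pi_2$--coordinate moves \emph{monotonically} by $\pm1$; the values of $D$ at the ends of the maximal $0$--runs therefore form the \emph{milestone walk} $S_m\eqdef\sum_{k=0}^m(-1)^k a_k$, and $D$ interpolates monotonically between $S_{m-1}$ and $S_m$. The even subsequence $S_{2\ell}=a_0+\sum_{j=1}^\ell(a_{2j}-a_{2j-1})$ is a genuine random walk with i.i.d., symmetric, mean--zero, finite--variance integer increments; it is aperiodic and recurrent, so by the local central limit theorem $\sup_{v\in\bZ}\bP(S_m=v)=O(m^{-1/2})$ for all $m$ (the odd milestones being handled by conditioning on the extra increment).

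I would then bound the occupation of $D$ level by level. Because $D$ is monotone on each run, for a fixed $\ell$ the number of indices $i\le\tau_m$ (where $\tau_m$ is the time of the $m$--th milestone) with $D_i=\ell$ is at most the number of milestone steps whose range contains $\ell$ plus the number of milestones with $S_m=\ell$. Both have expectation $O(\sqrt m)=o(m)$: a crossing probability at step $k$ is bounded by $\sum_j\bP(S_{k-1}=j)\,\bP(a_k\ge\lvert\ell-j\rvert)\le O(k^{-1/2})\sum_{r\ge0}\bP(a\ge r)=O(k^{-1/2})$, using $\bE[a]<\infty$, and summing over $k\le m$ gives $O(\sqrt m)$. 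Monotonicity in $m$ together with Borel--Cantelli along the dyadic times $m=2^j$ upgrades this to the almost--sure statements $\tfrac1m\#\{k\le m:\lvert S_k\rvert\le M\}\to0$ and $\tfrac1m\cdot(\text{crossings of }\ell)\to0$. Summing over the finitely many $\ell\in[-M,M]$ and converting the milestone index to real time via the law of large numbers $\tau_m/m\to\bE[a]+1=2$ yields the occupation bound, and hence the lemma.

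The main obstacle I anticipate is the passage from expectation to the almost--sure conclusion, compounded by the fact that the milestone increments $(-1)^k a_k$ are \emph{not} i.i.d. (their sign alternates). The first difficulty is dealt with by the Borel--Cantelli argument along a geometric subsequence, exploiting that occupation and crossing counts are monotone in time; the second by passing to the even subsequence $S_{2\ell}$, to which the local central limit theorem applies directly, and by noting that the monotone--run structure lets one control the full occupation of $D$ purely in terms of level crossings and visits of the milestone walk.
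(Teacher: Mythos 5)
Your proposal is correct, but it routes the probabilistic core through genuinely different machinery than the paper. The opening reduction is the same: since each $R_i$ acts in the $\phi^{-1}$-coordinate as a translation by an amount $D_i(\omega)$ independent of the base point, tightness of $\mu$ reduces the lemma to showing that the walk $(D_i)$ spends zero asymptotic density of time in any bounded set. From there the arguments diverge. The paper induces at the times of the symbol $0$ (its walk $S_i$ makes correlated $\pm1$ steps), then induces a second time on returns to the $+$ direction to get an i.i.d.\ mean-zero walk $V_i^+$, and derives the occupation estimate from the Chung--Erd\H{o}s ratio theorem combined with an infinite-expected-return-time argument and the strong law; passing back to real time then requires an independence lemma (the gap variables $d_i$ versus the $\sigma$-algebra generated by $(S_i)$) and the $L_1\cdot L_2\cdot L_3$ decomposition. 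You instead induce at the $1$'s, obtaining the milestone walk $\sum_{k\le m}(-1)^k a_k$ with signed geometric run lengths, pass to even indices to get honest i.i.d.\ symmetric finite-variance increments, and your key input is the local CLT bound $\sup_v \bP(S_m=v)=O(m^{-1/2})$, which makes expected occupation and level-crossing counts up to milestone $m$ of order $O(\sqrt m)$; Markov plus Borel--Cantelli along dyadic times (legitimate because the counts are monotone) upgrades this to the almost-sure density-zero statement, and the return to real time needs only monotonicity within runs and the strong law $\tau_m/m\to \bE[a]+1=2$, with no independence lemma, since monotonicity caps the occupation per run deterministically. The trade-off is clear: your route needs second moments and aperiodicity for the LCLT, plus the crossing estimates the paper avoids (its induced walk never skips a level, whereas yours jumps by $\pm a_k$); in exchange you get a quantitative $O(\sqrt m)$ bound and bypass both Chung--Erd\H{o}s and the paper's somewhat delicate factorization of $\nu$ into the law of $(S_i)$ times the law of the gaps. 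The individual steps you outline --- the crossing bound via $\sup_j\bP(S_{k-1}=j)$ times $\sum_{r\ge 0}\bP(a\ge r)<\infty$, the dyadic upgrade, and the final time change --- are all sound.
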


The above result now will provide the

\begin{proof}[Proof of Lemma~\ref{lemprop:randwalk}]
Note that, for any $\varepsilon>0$ we have 
\[
\pi_1^{-1}((\varepsilon,1-\varepsilon)) = \pi_2^{-1}((\varepsilon,1-\varepsilon))=(\varepsilon,1-\varepsilon)\times \{0,1\}.
\]
Hence, by~\eqref{eq:heree}  for $A=\phi^{-1}((\varepsilon,1-\varepsilon))$  we have
\[
	(f_{[\omega^i]})_\ast\mu(\varepsilon,1-\varepsilon)
	= (\pi_1\circ g_{[\omega^i]})_\ast\mu(\varepsilon,1-\varepsilon)
	=(R_i)_\ast\mu(A).
\]
Applying now Lemma \ref{lemcor:last} implies the lemma, proving Lemma~\ref{lemprop:randwalk}.
\end{proof}

\subsubsection{Random walks -- Analysis of the random walk $R_i$}

This random process has a complicated behavior. We will introduce a sequence of   simpler auxiliary random processes which will help to prove Lemma~\ref{lemcor:last}.

Without loss of generality, we assume $\omega_1=0$.
Given $\omega$, let $n_i$, $i\ge0$, enumerate the positions at which in the sequence $\omega$ there appears the symbol $0$. With this notation, we have the following relation
\[\begin{split}
	&R_{n_i}(x,\omega)
	=\phi^{-1}\circ\,\pi_2\circ g_{[\omega^{n_i}]})(x,+1)\\
	&= \begin{cases}
		(\phi^{-1}\circ\pi_2\circ g_{[\omega^{n_{i-1}}]})(x,+1)+1
		&\text{ if }\#\{k\in\{n_{i-1},\ldots,n_i\}\colon\omega_k=1\}\text{ is even},\\
		(\phi^{-1} \circ\pi_2\circ g_{[\omega^{n_{i-1}}]})(x,+1)-1
		&\text{ if }\#\{k\in\{n_{i-1},\ldots,n_i\}\colon\omega_k=1\}\text{ is odd}.
	\end{cases}		
\end{split}\]
An elementary calculation shows that the number of 1's between any two consecutive 0's is even with probability $2/3$ and odd with probability $1/3$. 
That is, the random variable $n_i-n_{i-1}$ takes an even 
value with probability 1/3 and an odd 
value with probability 2/3, moreover this random variable is independent from $\Omega_{n_{i-1}}$. Considering then the subsequence $(n_i)_i$, we will now pass from the random walk $R_i$ to the following ``induced'' walk $S_i$, which is defined by
\[
	S_i(x)
	\eqdef (\phi^{-1} \circ\pi_2 \circ g_{[\omega^{n_i}]})(x,+1).
\]
The latter is a random walk on the real line composed by translations
\[
	S_i(x)
	=\begin{cases}
		S_{i-1}(x)+1&
			\text{ if } \#\{k\in\{n_{i-1},\ldots,n_i\}\colon\omega_k=1\}\text{ is even},\\
		S_{i-1}(x)-1&
			\text{ if }\#\{k\in\{n_{i-1},\ldots,n_i\}\colon\omega_k=1\}\text{ is odd},
	\end{cases}
\]
where each step being independently and identically distributed: in the same direction as the previous one with probability 2/3 
and in the opposite direction with probability 1/3 (with the convention that the `zeroth step' was in the positive direction). 

Since $S_i$ does not encode explicitly the information \emph{in which direction} the walk is moving (it does not carry the second coordinate), we will instead consider the following auxiliary walk.
Let  $U_i$ be a random walk on $\bR\times \{-1, +1\}$ given by
\[
U_i(x,j) \eqdef \begin{cases} 
(x+j,j) & \text{ with probability } 2/3,\\
(x-j,-j) & \text{ with probability } 1/3.
\end{cases} 
\]
which is just $S_i$ adding the information about the direction of the last step: there exists a measure preserving isomorphism under which the first coordinate of $U_i(x,+1)$ is equal to $S_i(x)$.

Recall that we want to show that the evolution of a measure under the application of the fiber maps of the IFS is eventually moving to the boundary of $(0,1)$, that is, to $\pm\infty$ for the walk lifted by $\phi^{-1}$ to $\bR$. For that reason, let us now consider an ``induced'' walk that only looks at times immediately after we moved in positive direction.
Let $V_i^+$ denote the random walk which is the first return of $U_i$ to $\bR \times \{+1\}$. That is,
\[
V_i^+(x) \eqdef \begin{cases}
x+1 & \text{ with probability } 2/3,\\
x-k, k=0,1,\ldots & \text{ with probability }2^k/3^{k+2}.
\end{cases}
\] 
At last we got an usual random walk.
Note that the walk $V_i^+$  is recurrent. Indeed, an elementary calculation gives that its expected displacement is zero, that is $\bE(V_i^+(x)-x)=0$.
Given $A\subset\bR$, let us define $\mathbbm1_A(y)=1$ if $y\in A$ and $\mathbbm1_A(y)=0$ otherwise.

The Chung and Erd\"os Theorem \cite{ChuErd:51} immediately implies

\begin{lemma}
For any bounded $A\subset \bR$, $\nu$-almost surely
\[
	\lim_{n\to\infty} \frac 1n \sum_{i=1}^n\mathbbm{1}_A(V_i^+(0))=0.
\]
\end{lemma}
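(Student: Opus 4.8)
The plan is to read off $(V_i^+(0))_{i\ge1}$ as a mean-zero random walk on the integer lattice and then to combine recurrence with the cited occupation-time theorem. The text has already exhibited $V^+$ as a genuine random walk on $\bZ$ whose i.i.d.\ increments take the value $+1$ with probability $2/3$ and $-k$ with probability $2^k/3^{k+2}$ ($k\ge0$), that this increment distribution has vanishing mean $\bE(V_i^+(x)-x)=0$, and that the resulting walk is recurrent. The only structural fact I would add is that this walk is \emph{null} recurrent: counting measure on $\bZ$ is invariant and, by irreducibility and recurrence, is the unique invariant measure up to scaling and hence infinite, so $V^+$ admits no invariant probability measure and the return time to any given state has infinite expectation under $\nu$.

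I would then reduce the bounded set $A$ to finitely many lattice points. Since $V^+$ starts at $0$ and moves by integer steps, only the finite set $A\cap\bZ$ is ever visited, so
\[
	\frac1n\sum_{i=1}^n\mathbbm{1}_A(V_i^+(0))
	=\sum_{a\in A\cap\bZ}\frac1n\sum_{i=1}^n\mathbbm{1}_{\{a\}}(V_i^+(0)),
\]
and it suffices to treat a single point $a$. By irreducibility (the walk reaches every integer using the steps $+1$ and $-1$) and recurrence, the walk hits $a$ at an almost surely finite time, after which the successive returns to $a$ form, by the strong Markov property, an i.i.d.\ renewal sequence whose inter-return times have infinite expectation. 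The Chung--Erd\H{o}s occupation-time theorem \cite{ChuErd:51} (equivalently, the renewal strong law of large numbers with infinite mean, giving renewal counts that are $o(n)$) then yields $\frac1n\sum_{i=1}^n\mathbbm{1}_{\{a\}}(V_i^+(0))\to0$ $\nu$-almost surely. Summing over the finitely many $a\in A\cap\bZ$ gives the lemma.

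The one point deserving care is the passage from convergence in mean to almost sure convergence. Convergence in mean is immediate: a local limit theorem gives $P(V_i^+(0)=a)=O(i^{-1/2})$, whence $\bE[\frac1n\sum_{i\le n}\mathbbm{1}_{\{a\}}(V_i^+(0))]=O(n^{-1/2})\to0$. The harder, almost sure statement is exactly what the renewal/Chung--Erd\H{o}s input supplies, since it controls the fluctuations of the occupation time through the i.i.d.\ renewal structure of the return times rather than through the individual, only summable-in-mean, terms. Once recurrence and null recurrence are in place, no further estimate is required, which is why the lemma follows immediately from the cited theorem.
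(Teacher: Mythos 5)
Your proof is correct, and it differs from the paper's in how it establishes the key fact that the expected return time of $V^+$ to a given state is infinite. The paper gets this from the ratio limit theorem of \cite{ChuErd:51}: for all $a,b\in\bZ$ one has $\mathbb{P}(V_i^+(0)=a)/\mathbb{P}(V_i^+(0)=b)\to1$, which forces $\mathbb{P}(V_i^+(0)=a)\to0$ for every $a$; it then argues by contradiction that a finite expected return time would, via the strong law of large numbers and Egorov's theorem, make the walk occupy $0$ with positive asymptotic frequency, contradicting this decay. You instead invoke textbook Markov chain theory: counting measure on $\bZ$ is invariant, and irreducibility plus recurrence make it the unique invariant measure up to scaling; since it is infinite, there is no invariant probability, the chain is null recurrent, and all expected return times are infinite. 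From there the two arguments coincide: the first hitting time of $a$ is almost surely finite, the subsequent inter-return times are i.i.d.\ with infinite mean (distributed, by translation invariance, as returns to $0$), the strong law for nonnegative i.i.d.\ variables with infinite mean gives $T_k^a/k\to\infty$ almost surely, hence the occupation frequency of $\{a\}$ vanishes, and summing over the finitely many points of $A\cap\bZ$ finishes. Your route is shorter and essentially removes the dependence on \cite{ChuErd:51} --- the infinite-mean renewal law does all the work --- though your label ``Chung--Erd\H{o}s occupation-time theorem'' for that law is loose: Theorem 3.1 of \cite{ChuErd:51} is a ratio limit theorem, which is what the paper actually uses, and for a different purpose. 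Conversely, the paper's detour yields the extra information $\mathbb{P}(V_i^+(0)\in A)\to0$ along the way, which your argument does not produce (your local-limit-theorem aside would recover it, but it is not needed for the lemma).
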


\begin{proof}
	By~\cite[Theorem 3.1]{ChuErd:51}, for any $a,b\in\bZ$ we have 
\[
	\lim_{i\to\infty}\frac{\mathbb P(V_i^+(0)=a)}{\mathbb P(V_i^+(0)=b)}
	=1.
\]	
Hence, for any bounded set $A\subset\bZ$ for every $a\in A$ we have
\[
	\lim_{i\to\infty}\mathbb P(V_i^+(0)=a)\le \frac{1}{\lvert A\rvert},	
\]
where $\lvert A\rvert$ denotes the cardinality of $A$ and hence 
\[
	\lim_{i\to\infty}\mathbb P(V_i^+(0)\in A)
	=0.
\]
In particular, for every $a\in\bZ$ we  have 
\begin{equation}\label{eq:brades}
	\lim_{i\to\infty}\mathbb P(V_i^+(0)=a)
	=0.	
\end{equation}

Let $T_k$ denote the $k$th return time of $V_i^+(0)$ to $0$. Note that $T_k$ equals the sum of $k$ independent copies of $T_1$. 

\begin{claim}
	$\mathbb E(T_1)=\infty$.
\end{claim}

\begin{proof}
	By contradiction, assume that $\mathbb E(T_1)$ would be finite. Hence, by the strong law of large numbers, almost surely we would have $\lim_{n\to\infty}\frac1nT_n=\mathbb E(T_1)$. Hence, by Egorov's theorem, for any $\varepsilon\in(0,1)$ there would exist $N\ge1$ such that for all $k\ge N$ with probability at least $1-\varepsilon$ we would have $\frac1kT_k\le\mathbb E(T_1)+\varepsilon$.
This would imply 
\[
	\sum_{i=1}^{k(\mathbb E(T_1)+\varepsilon)}\mathbb P(V_i^+(0)=0)
	=\int d\omega
		\sum_{i=1}^{k(\mathbb E(T_1)+\varepsilon)}\mathbbm{1}_{\{0\}}(V_i^+(0))
	\ge(1-\varepsilon)k(\mathbb E(T_1)+\varepsilon),
\]
which would imply $\limsup_{i\to\infty} \mathbb P(V_i^+(0)=0)>0$. Contradiction with~\eqref{eq:brades}.
\end{proof}

Analogously, given $a\in\bZ$, let $T_1^a$ denote the first hitting time of $V_i^+(0)$ at $a$. Observe that the return times of $V_{i+T_1^a}^+(0)$ to $a$ have the same distribution as the return times of $V_i^+(0)$ to $0$. Hence, if $T_k^a$ denotes the $k$th return time of $V_i^+(0)$ to $a$, then analogously to the above claim we conclude $\mathbb E(T_1^a)=\infty$. 
Hence, by the strong law of large numbers almost surely we have $\frac1kT_k^a\to \infty$, which in turn implies 
\[
	0
	= \limsup_{k\to\infty}\frac{k}{T_k^a}
	= \limsup_{n\to\infty}\frac1n\sum_{i=1}^n\mathbbm{1}_{\{a\}}(V_i^+(0)).
\]

Writing now $\mathbbm{1}_A=\sum_{a\in A}\mathbbm{1}_a$, we obtain
\[
	\lim_{n\to\infty}\frac1n\sum_{i=1}^n\mathbbm{1}_{A}(V_i^+(0))
	=0
\]
almost surely.
This proves the lemma.
\end{proof}

Looking at the random walk $U_i$, we get the corresponding statement for the set $A\times \{+1\}$, that is
\[
	\lim_{n\to\infty} \frac 1n\sum_{i=1}^n\mathbbm{1}_{A\times\{+1\}}(U_i(0,+1))=0.
\]
The proof for $A\times \{-1\}$ is similar, only instead of $V_i^+$ we need to take the first return to $\bR \times \{-1\}$. Recalling that the projection of $U_i$ to the first coordinate is just $S_i$, we obtain the following corollary.

\begin{corollary} \label{cor:sometimes}
For any bounded $A\subset \bR$, $\nu$-almost surely
\[
\lim_{n\to\infty} \frac 1n \sum_{i=1}^n\mathbbm1_A(S_i(0))=0.
\]
\end{corollary}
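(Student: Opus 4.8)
The plan is to derive Corollary~\ref{cor:sometimes} from the already-established lemma on $V_i^+$ by splitting the walk $U_i$ according to which copy, $\bR\times\{+1\}$ or $\bR\times\{-1\}$, it currently occupies. Since $S_i(0)$ is, up to the measure-preserving isomorphism, the first coordinate of $U_i(0,+1)=(x_i,j_i)$ with $j_i\in\{+1,-1\}$, and since exactly one of $j_i=+1$, $j_i=-1$ holds at each time, I would begin from the pointwise identity
\[
	\mathbbm{1}_A(S_i(0))
	=\mathbbm{1}_{A\times\{+1\}}(U_i(0,+1))+\mathbbm{1}_{A\times\{-1\}}(U_i(0,+1)),
\]
so that it suffices to show that the Ces\`aro averages of each of the two terms on the right tend to $0$ for $\nu$-almost every $\omega$.

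First I would treat the $+1$-layer term. Let $\tau_1<\tau_2<\cdots$ enumerate the times $i$ at which the walk sits in $\bR\times\{+1\}$, so that $V_k^+(0)=x_{\tau_k}$ is precisely the first-return walk to that layer, and set $K^+(n)\eqdef\#\{k\colon\tau_k\le n\}$. Then
\[
	\frac1n\sum_{i=1}^n\mathbbm{1}_{A\times\{+1\}}(U_i(0,+1))
	=\frac{K^+(n)}{n}\cdot\frac{1}{K^+(n)}\sum_{k=1}^{K^+(n)}\mathbbm{1}_A(V_k^+(0)).
\]
Here $K^+(n)/n\le 1$ trivially, while the second factor tends to $0$ by the lemma on $V_i^+$, once one knows that $K^+(n)\to\infty$ almost surely. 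The latter holds because the sequence of second coordinates $(j_i)_i$ is an irreducible, hence recurrent, two-state Markov chain (flip with probability $1/3$, stay with probability $2/3$), so both layers are visited infinitely often. Thus the $+1$-layer contribution vanishes in the limit.

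For the $-1$-layer term I would invoke the symmetry $\Phi(x,j)\eqdef(-x,-j)$, which conjugates the law of $U$ to itself and interchanges the two layers. Consequently the first-return walk $V^-$ to $\bR\times\{-1\}$ is distributed as $-V^+$, so the lemma applies verbatim to $V^-$ (replacing the bounded set $A$ by the bounded set $-A$). Writing $K^-(n)$ for the number of visits to the $-1$-layer up to time $n$, again with $K^-(n)\to\infty$ and $K^-(n)/n\le 1$, the same factorization gives
\[
	\frac1n\sum_{i=1}^n\mathbbm{1}_{A\times\{-1\}}(U_i(0,+1))
	=\frac{K^-(n)}{n}\cdot\frac{1}{K^-(n)}\sum_{k=1}^{K^-(n)}\mathbbm{1}_A(V_k^-)\longrightarrow 0.
\]
Adding the two contributions yields the corollary.

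The one genuine point of care, and the step I expect to be the main obstacle, is that, unlike $V^+$, the embedded walk $V^-$ does not start at $0$ but at the \emph{random} first entry point of $U_i(0,+1)$ into $\bR\times\{-1\}$. To close this gap I would observe that the proof of the lemma, which rests on the Chung--Erd\H{o}s recurrence estimate \cite{ChuErd:51} and on the strong law applied to the successive return times, is translation invariant and therefore holds with any fixed starting point $a\in\bR$; conditioning on the $\sigma$-algebra generated by the first entry time and entry point then transfers the conclusion to the random initial position, which is all that is required. Everything else reduces to the bookkeeping of return times together with the elementary bounds $K^\pm(n)/n\le 1$ and the almost sure divergence $K^\pm(n)\to\infty$.
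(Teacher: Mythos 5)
Your proposal is correct and follows essentially the same route as the paper: the same decomposition of $\mathbbm1_A(S_i(0))$ according to which layer $\bR\times\{+1\}$ or $\bR\times\{-1\}$ the walk $U_i$ occupies, combined with the lemma on the first-return walk $V_i^+$ (and its mirror for the $-1$-layer). The paper's proof is terse — it states the $+1$-layer case directly and dismisses the $-1$-layer case as ``similar'' — whereas you make explicit the visit-count factorization $\frac{K^\pm(n)}{n}\cdot\frac{1}{K^\pm(n)}\sum_k\mathbbm1_A(\cdot)$, the recurrence of the two-state chain of signs, the reflection symmetry identifying the law of $V^-$, and the treatment of the random entry point into the $-1$-layer; these are exactly the details the paper leaves to the reader, and your handling of them is sound.
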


Recall that $S_i$  takes into account only the steps of the initial walk when the symbol 0 appeared. Vaguely speaking, $S_i$ takes only into account whether $n_i-n_{i-1}$ is even or odd. Let $\Omega'$ be the $\sigma$-algebra generated by $S_i$. 
We will consider the following auxiliary random variable $d_i$ defined by
\[
d_i \eqdef 
\begin{cases}
n_i-n_{i-1} & \text{if }n_i - n_{i-1} \text{ is odd,}\\
n_i-n_{i-1}-1 &\text{if } n_i - n_{i-1} \text{ is even.}
\end{cases}
\]
Below we will argue that $d_i$ is independent of $\Omega'$.
The remaining information (needed to recover $\Omega = \bigcap_n \Omega_n$) is in exact values of $n_i-n_{i-1}$. Knowing $(S_i)_{i=1}^n$ and $(d_i)_{i=1}^n$, we can recover $(R_i)_{i=1}^n$. Note that $d_i$ being independent from $\Omega'$ means  that we can decompose the measure $\nu$ (the $(1/2,1/2)$-Bernoulli measure on $(\Sigma_2,\Omega)$) as $\mu_s \times \mu_d$, 
where $\mu_s$ is the distribution of $(S_i)$ and $\mu_d$ is the joint distribution of the i.i.d. random variables $d_i$. Hence then we can conclude that if for $\mu_s$-almost every realization $(S_i)$ for $\mu_d$-almost every realization $(d_i)$ an event holds, then it holds for $\nu$-almost every $\omega$.

\begin{lemma}\label{lem:independent}
The random variable $(d_i)$ is independent of $\Omega'$ and has finite expectation $\mathbb E(d_i)$.
\end{lemma}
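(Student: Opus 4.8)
The plan is to trace every quantity back to the gaps $g_i\eqdef n_i-n_{i-1}$ between consecutive occurrences of the symbol $0$, and then to read off the independence from the structure of the geometric law. First I would record that, since $\omega$ is an i.i.d.\ Bernoulli$(1/2)$ sequence, the gaps $(g_i)_i$ are i.i.d.\ with
\[
	\mathbb P(g_i=m)=2^{-m},\qquad m\ge1,
\]
because $g_i$ equals $1$ plus the number of intermediate symbols $1$ before the next $0$, and that count is geometric. Both the increment $e_i\eqdef S_i(0)-S_{i-1}(0)\in\{+1,-1\}$ of the walk and the variable $d_i$ are deterministic functions of $g_i$ alone: $e_i=+1$ exactly when the number of intermediate $1$'s is even, i.e.\ when $g_i$ is odd, while $d_i$ is the largest odd integer not exceeding $g_i$ (so $d_i=g_i$ if $g_i$ is odd and $d_i=g_i-1$ if $g_i$ is even).

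The heart of the lemma is the independence of $e_i$ and $d_i$, and this is the step I expect to carry all the weight. The point is that $\{d_i=2m+1\}$ is precisely the event $\{g_i\in\{2m+1,\,2m+2\}\}$, on which $e_i=+1$ when $g_i=2m+1$ and $e_i=-1$ when $g_i=2m+2$. Hence the conditional law of $e_i$ given $d_i=2m+1$ is governed by the ratio
\[
	\mathbb P(g_i=2m+1):\mathbb P(g_i=2m+2)=2^{-(2m+1)}:2^{-(2m+2)}=2:1,
\]
which does not depend on $m$. Thus the conditional distribution of $e_i$ is the same for every value of $d_i$, which is exactly independence; a bookkeeping computation also gives $\mathbb P(e_i=+1)=2/3$, $\mathbb P(e_i=-1)=1/3$, and $\mathbb P(d_i=2m+1)=\tfrac34\,4^{-m}$. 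I view the conceptual content as the decoupling of the \emph{parity} of a gap (which is all that $S_i$ sees) from its \emph{magnitude} $d_i$; it rests on the constant ratio between consecutive weights of the geometric law, and everything else is routine.

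With the per-index independence established, the upgrade to sequences is immediate. Since the gaps $(g_i)_i$ are i.i.d., the pairs $(e_i,d_i)_i$ are i.i.d.\ as well, and within each pair the two coordinates are independent; therefore the whole sequence $(d_i)_i$ is independent of the whole sequence $(e_i)_i$. As $S_i(0)=S_0(0)+\sum_{j=1}^{i}e_j$ differs from the deterministic constant $S_0(0)$ by the cumulative sum of the increments, the $\sigma$-algebra $\Omega'$ generated by $(S_i)_i$ coincides with the one generated by $(e_i)_i$. Consequently $(d_i)_i$ is independent of $\Omega'$, which is the first assertion.

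Finally, integrability is read off from the explicit law: since $\mathbb P(d_i=2m+1)=\tfrac34\,4^{-m}$,
\[
	\mathbb E(d_i)=\tfrac34\sum_{m\ge0}(2m+1)\,4^{-m}=\tfrac53<\infty,
\]
the series converging as a geometric series of ratio $1/4$ together with its first moment. This completes the plan.
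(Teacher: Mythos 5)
Your central computation is the right one, and it is essentially the paper's own: the paper conditions on the parity of $n_i-n_{i-1}$ and checks that the conditional law of $d_i$ equals $3/4^{k+1}$ in both cases, while you condition on $d_i$ and check that the conditional law of the direction is $2{:}1$ for every value of $d_i$; these are equivalent formulations of independence of the pair, both resting on the constant ratio between consecutive weights of the geometric gap law. Your value $\mathbb{E}(d_i)=5/3$ is also correct. There is, however, one step that fails as written: the identification ``$e_i=+1$ exactly when the number of intermediate $1$'s is even, i.e.\ when $n_i-n_{i-1}$ is odd''. With the actual definition $S_i(x)=(\phi^{-1}\circ\pi_2\circ g_{[\omega^{n_i}]})(x,+1)$, the increment $e_i=S_i(x)-S_{i-1}(x)$ equals the \emph{current direction} of the walk, namely $e_i=(-1)^{\#\{k<n_i\colon\omega_k=1\}}$, so that $e_i=e_{i-1}$ (not $e_i=+1$) precisely when $n_i-n_{i-1}$ is odd; this direction memory is exactly why the paper introduces the auxiliary walk $U_i$ on $\bR\times\{-1,+1\}$. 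Consequently $e_i$ is \emph{not} a function of the single gap $n_i-n_{i-1}$, and your assertion that the pairs $(e_i,d_i)_i$ are i.i.d.\ is false: the true increment sequence $(e_i)_i$ is a persistent $\pm1$ chain, each step repeating the previous direction with probability $2/3$.

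The repair is one line, and your architecture survives it. Set $\epsilon_i\eqdef e_ie_{i-1}$, with the convention $e_0=+1$. Then $\epsilon_i$ \emph{is} a function of the $i$th gap alone ($\epsilon_i=+1$ iff $n_i-n_{i-1}$ is odd), so the pairs $(\epsilon_i,d_i)_i$ are i.i.d., and your $2{:}1$ ratio computation applies verbatim to give independence of $\epsilon_i$ and $d_i$ within each pair. Since $e_i=\epsilon_1\cdots\epsilon_i$ and the $S_i$ are obtained from a deterministic starting point by summing the $e_j$, one has $\Omega'=\sigma\big((S_i)_i\big)=\sigma\big((e_i)_i\big)=\sigma\big((\epsilon_i)_i\big)$, so independence of $(d_i)_i$ from $(\epsilon_i)_i$ yields the lemma. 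Incidentally, the paper's displayed recursion for $S_i$ commits the same slip (it writes the increment as $\pm1$ according to the parity of the number of $1$'s in the gap, omitting the direction memory), but the prose following it and the definition of $U_i$ give the correct dynamics, and the paper's proof, by conditioning on the gap parity rather than on the absolute direction, is insensitive to the issue. Your sequence-level bookkeeping (reducing $\Omega'$ to the increments and invoking i.i.d.\ pairs), once corrected as above, is actually more explicit than the paper's terse argument.
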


\begin{proof}
Whether $n_i-n_{i-1}$ is even or odd, $d_i$ always has the same distribution
\[
	\mathbb P(d_i=2k+1| n_i-n_{i-1}\text{ even})
	= \mathbb P(n_i-n_{i-1}=2k+1| n_i-n_{i-1}\text{ odd}) = \frac{3}{4^{k+1}},
\]
which follows from an elementary calculation. 
Hence, in particular, the expected value of $d_i$ is finite. This proves the lemma.
\end{proof}

With the above, we now return to the random walk $R_i$. 

\begin{corollary}
For any bounded $A\subset \bR$, $\nu$-almost surely,
\[
	\lim_{n\to\infty} \frac 1n\sum_{i=1}^n \mathbbm1_A(R_i(0))
	=0.
\]
\end{corollary}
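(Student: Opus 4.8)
The plan is to realise $R_i(0)$ as a \emph{time change} of the induced walk $S_i(0)$ and then push the occupation estimate of Corollary~\ref{cor:sometimes} through this time change. First I would note that, by the definitions of $g_0,g_1$ and of the projection $\phi^{-1}\circ\pi_2$, the value $R_i(0)$ changes, as a function of $i$, only when $\omega_i=0$: an application of $g_1$ merely flips the second coordinate and leaves $\pi_2$ (hence $\phi^{-1}\circ\pi_2$) untouched. Thus $R_i(0)$ is piecewise constant, and for $n_j\le i<n_{j+1}$ one has $R_i(0)=R_{n_j}(0)=S_j(0)$. Writing $\tau_j\eqdef n_{j+1}-n_j\ge1$ for the length of the $j$-th constant block and $a_j\eqdef\mathbbm1_A(S_j(0))$, this yields, along the subsequence $n=n_m$,
\[
	\sum_{i=1}^{n_m}\mathbbm1_A(R_i(0))
	= \sum_{j=0}^{m-1}\tau_j\,a_j+O(1),
	\qquad
	n_m-n_0=\sum_{j=0}^{m-1}\tau_j\ge m,
\]
where $n_0$ is finite (we assumed $\omega_1=0$).

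Next I would estimate the numerator. Since $\tau_j$ equals either $d_{j+1}$ or $d_{j+1}+1$ (recall $d_{j+1}$ is the odd part of the gap $n_{j+1}-n_j$), we have $\tau_j\le d_{j+1}+1$ and hence
\[
	\sum_{j=0}^{m-1}\tau_j\,a_j
	\le \sum_{j=0}^{m-1}d_{j+1}\,a_j+\sum_{j=0}^{m-1}a_j .
\]
The second sum is $o(m)$ immediately by Corollary~\ref{cor:sometimes}. The heart of the matter, and the step I expect to be the main obstacle, is to show that $\sum_j d_{j+1}a_j$ is also $o(m)$: here the weights $d_{j+1}$ have strictly positive mean, while the densities $a_j$ only vanish in Ces{\`a}ro average, so the two effects must be balanced against each other.

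To do this I would condition on the $\sigma$-algebra $\Omega'$ generated by $(S_i)$ and use the product decomposition $\nu=\mu_s\times\mu_d$ supplied by Lemma~\ref{lem:independent}. Conditionally on $\Omega'$ the indicator sequence $(a_j)$, and hence the set $J\eqdef\{j\colon a_j=1\}$, is fixed, and $M_m\eqdef\#\big(J\cap\{0,\ldots,m-1\}\big)$ satisfies $M_m/m\to0$ for $\mu_s$-a.e.\ realisation, by Corollary~\ref{cor:sometimes}. Since $(d_i)$ is independent of $\Omega'$, the variables $(d_{j+1})_{j\in J}$ are i.i.d.\ with finite mean $\mathbb E(d)$, so the strong law of large numbers gives $\frac1{M_m}\sum_{j\in J,\,j<m}d_{j+1}\to\mathbb E(d)$ (the case $\#J<\infty$ being trivial). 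Therefore
\[
	\frac1m\sum_{j=0}^{m-1}d_{j+1}\,a_j
	=\frac{M_m}{m}\cdot\frac1{M_m}\!\!\sum_{\substack{j\in J\\ j<m}}\!\!d_{j+1}
	\longrightarrow 0\cdot\mathbb E(d)=0 .
\]
By Fubini this convergence holds $\nu$-a.s., so $\sum_j d_{j+1}a_j=o(m)$; combined with the previous display the numerator is $o(m)$ while the denominator is at least $m$, whence $\frac1{n_m}\sum_{i=1}^{n_m}\mathbbm1_A(R_i(0))\to0$ $\nu$-a.s.

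Finally I would pass from the subsequence $(n_m)$ to arbitrary $n$. For $n_m\le n<n_{m+1}$ the additional indices lie in a single block and contribute at most $\tau_m$ terms; since $d$ has finite mean one has $\tau_m/m\to0$ $\nu$-a.s., and as $n\ge n_m\ge m$ the averages at $n$ and at $n_m$ differ by a quantity tending to $0$. This gives the stated conclusion for the full sequence.
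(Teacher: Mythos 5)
Your proposal is correct and takes essentially the same route as the paper's own proof: the same block decomposition $R_i(0)=S_j(0)$ for $n_j\le i<n_{j+1}$, reduction to the subsequence $(n_m)$, Corollary~\ref{cor:sometimes} for the vanishing density of the visit set $J$, and Lemma~\ref{lem:independent} combined with the strong law of large numbers to control the gap lengths $d_{j+1}$ along $J$. The differences are only bookkeeping --- you bound the numerator by $\sum_j d_{j+1}a_j+\sum_j a_j$ instead of factoring the limsup into three factors $L_1\cdot L_2\cdot L_3$ as the paper does --- and your justification of the passage from the subsequence $(n_m)$ to all $n$ via $\tau_m/m\to0$ is actually more explicit than the paper's.
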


\begin{proof}
Note that $R_n(0)=R_{n_{i-1}}(0)=S_{i-1}(0)$ for every $n\in\{n_{i-1},\ldots, n_i-1\}$. Hence, we have
\[
	\frac 1{n_i} \sum_{k=0}^{n_i-1}\mathbbm1_A(R_k(0))
	=\frac 1 {n_i} \sum_{\ell=0}^{i-1} (n_{\ell+1}-n_\ell) \mathbbm1_A(S_\ell(0)).
\]
In particular, to show the claim, it is enough to consider the  specific subsequence from above proving that
\[
	0=\lim_{i\to\infty} \frac 1{n_i}\sum_{k=0}^{n_i-1}\mathbbm1_A(R_k(0))
	= \lim_{i\to\infty} \frac 1 {n_i} \sum_{\ell=0}^{i-1} 
	(n_{\ell+1}-n_\ell) \mathbbm1_A(S_\ell(0)).
\]
Thus, we have
\[\begin{split}
	&\limsup_{n\to\infty} \frac 1n \sum_{k=0}^{n-1} \mathbbm1_A(R_k(0)) 
	= \limsup_{i\to\infty} \frac 1 {n_i} \sum_{k=0}^{n_i-1}  \mathbbm1_A(R_k(0)) \\
	&= \limsup_{i\to\infty} \frac 1 {n_i} \sum_{\ell=0}^{i-1} (n_{\ell+1}-n_\ell)  \mathbbm1_A(S_\ell(0)) \\
	&= \limsup_{i\to\infty}\frac{i}{n_i} 
			\cdot  \frac{\sum_{\ell=0}^{i-1} \mathbbm1_A(S_\ell(0))}{i}
			\cdot \frac{\sum_{\ell=0}^{i-1} (n_{\ell+1}-n_\ell)  \mathbbm1_A(S_\ell(0))}{\sum_{\ell=0}^{i-1} \mathbbm1_A(S_\ell(0))}\\
	&\le \limsup_{i\to\infty}\frac{i}{n_i} 
			\cdot \limsup_{i\to\infty} \frac{\sum_{\ell=0}^{i-1} \mathbbm1_A(S_\ell(0))}{i}
			\cdot \limsup_{i\to\infty}\frac{\sum_{\ell=0}^{i-1} (n_{\ell+1}-n_\ell)  \mathbbm1_A(S_\ell(0))}{\sum_{\ell=0}^{i-1} \mathbbm1_A(S_\ell(0))}\\
	&=: L_1\cdot L_2\cdot L_3.		
\end{split}\]	

\begin{claim*}
	Almost surely, we have that $L_1$ and $L_3$  are finite and $L_2=0$.
\end{claim*}

With this claim and also using Lemma~\ref{lem:independent}, obtain that $\nu$-almost surely we have  $L_1\cdot L_2\cdot L_3=0$ and we conclude that $\nu$-almost surely
\[
	\limsup_{n\to\infty} \frac 1n \sum_{k=0}^{n-1} \mathbbm1_A(R_k(0)) 
	=0,
\]
proving the corollary. What remains is to prove the claim.

To estimate these latter terms, first observe that almost surely
\[
	\frac{n_i}{i}
	= \frac{(n_i-n_{i-1})+\ldots+(n_1-0)}{i}	
	\to \mathbb E(n_i-n_{i-1}).
\]
Hence
\[
	\mathbb E(n_i-n_{i-1})
	= \sum_{i\ge1}\Big(d_i\mathbb P(n_i-n_{i-1}\text{ even})
								+(d_i+1)\mathbb P(n_i-n_{i-1}\text{ odd})\Big)
	\ge \mathbb E(d_i)								
\]
and therefore $L_1\le (\mathbb E(d_i))^{-1}		<\infty$.
Moreover, this calculation also gives
\begin{equation}\label{eq:L1second}
	\mathbb E(n_i-n_{i-1})
	\le \mathbb E(d_i)+1.
\end{equation}

By Corollary~\ref{cor:sometimes}, we have $L_2=0$.

To estimate $L_3$, first observe that, as the expected value of $d_i$ is finite, if we fix $(S_i)$ then, by the law of large numbers, almost surely the average of $\{d_i\colon S_i(0)\in A\}$ and the average of $\{d_i\colon  S_i(0)\notin A\}$ converge to the same limit $\mathbb E(d_i)$. 
Thus, almost surely 
\[
	\lim_{i\to\infty}\frac{\sum_{\ell=0}^{i-1} (n_{\ell+1}-n_\ell)  \mathbbm1_A(S_\ell(0))}
			{\sum_{\ell=0}^{i-1} \mathbbm1_A(S_\ell(0))}
	=\lim_{i\to\infty}\frac{	\sum_{\ell=0}^{i-1} (n_{\ell+1}-n_\ell)}
									{i}
	\le \mathbb E(d_i)+1,			
\]
where the latter follows from~\eqref{eq:L1second}.
Thus, $L_3$ is finite. This proves the claim.
\end{proof}

The statement for random walk starting from 0 can be generalized to any starting distribution which allows us to finally prove Lemma~\ref{lemcor:last}.

\begin{proof}[Proof of Lemma~\ref{lemcor:last}]
Fix $\mu$. For any $\varepsilon>0$ we can find some $N$  such that $\mu([-N,N])>1-\varepsilon$. As $\phi^{-1}\circ\pi_2\circ g_i$ is a translation, if $R_n(0) \notin B_N(A)$ then $(R_n)_*\mu(A) < \varepsilon$. Hence,

\[
\limsup_{n\to\infty} \frac 1n (R_n)_*\mu(A)\leq \varepsilon + \lim_{n\to\infty} \frac 1n \mathbbm1_{B_N(A)}(R_n(0))=\varepsilon.
\]
Passing with $\varepsilon$ to 0 ends the proof.
\end{proof}

\bibliographystyle{plain}

\begin{thebibliography}{10}

\bibitem{BocBonDia:16}
Jairo Bochi, Christian Bonatti, and Lorenzo~J. D\'iaz.
\newblock Robust criterion for the existence of nonhyperbolic ergodic measures.
\newblock {\em Comm. Math. Phys.}, 344(3):751--795, 2016.

\bibitem{BocBonGel:}
Jairo Bochi, Christian Bonatti, and Katrin Gelfert.
\newblock Dominated {Pesin} theory: Convex sum of hyperbolic measures.
\newblock {\em To appear in: Israel J. Math.}

\bibitem{BonDiaVia:05}
Christian Bonatti, Lorenzo~J. D\'iaz, and Marcelo Viana.
\newblock {\em Dynamics beyond uniform hyperbolicity}, volume 102 of {\em
  Encyclopaedia of Mathematical Sciences}.
\newblock Springer-Verlag, Berlin, 2005.
\newblock A global geometric and probabilistic perspective, Mathematical
  Physics, III.

\bibitem{ChuErd:51}
Kai~Lai Chung and Paul Erd\"os.
\newblock Probability limit theorems assuming only the first moment. {I}.
\newblock {\em Mem. Amer. Math. Soc.,}, No. 6:19, 1951.

\bibitem{Cro:11}
Sylvain Crovisier.
\newblock Partial hyperbolicity far from homoclinic bifurcations.
\newblock {\em Adv. Math.}, 226(1):673--726, 2011.

\bibitem{DiaEstRoc:16}
Lorenzo~J. D\'iaz, Salete Esteves, and Jorge Rocha.
\newblock Skew product cycles with rich dynamics: from totally non-hyperbolic
  dynamics to fully prevalent hyperbolicity.
\newblock {\em Dyn. Syst.}, 31(1):1--40, 2016.

\bibitem{DiaGel:12}
Lorenzo~J. D\'iaz and Katrin Gelfert.
\newblock Porcupine-like horseshoes: transitivity, {L}yapunov spectrum, and
  phase transitions.
\newblock {\em Fund. Math.}, 216(1):55--100, 2012.

\bibitem{DiaGelRam:13}
Lorenzo~J. D\'iaz, Katrin Gelfert, and {Micha\l} Rams.
\newblock Almost complete {L}yapunov spectrum in step skew-products.
\newblock {\em Dyn. Syst.}, 28(1):76--110, 2013.

\bibitem{DiaGelRam:14}
Lorenzo~J. D\'iaz, Katrin Gelfert, and {Micha\l} Rams.
\newblock Abundant rich phase transitions in step-skew products.
\newblock {\em Nonlinearity}, 27(9):2255--2280, 2014.

\bibitem{DiaGelRam:17a}
Lorenzo~J. D\'iaz, Katrin Gelfert, and {Micha\l} Rams.
\newblock Nonhyperbolic step skew-products: ergodic approximation.
\newblock {\em Ann. Inst. H. Poincar\'e Anal. Non Lin\'eaire},
  34(6):1561--1598, 2017.

\bibitem{DiaGelRam:17}
Lorenzo~J. D\'iaz, Katrin Gelfert, and {Micha\l} Rams.
\newblock Topological and ergodic aspects of partially hyperbolic
  diffeomorphisms and nonhyperbolic step skew products.
\newblock {\em Tr. Mat. Inst. Steklova}, 297(Poryadok i Khaos v Dinamicheskikh
  Sistemakh):113--132, 2017.

\bibitem{DiaHorRioSam:09}
Lorenzo~J. D\'iaz, Vanderlei Horita, Isabel Rios, and Martin Sambarino.
\newblock Destroying horseshoes via heterodimensional cycles: generating
  bifurcations inside homoclinic classes.
\newblock {\em Ergodic Theory Dynam. Systems}, 29(2):433--474, 2009.

\bibitem{DiaMar:15}
Lorenzo~J. D\'iaz and Tiane Marcarini.
\newblock Generation of spines in porcupine-like horseshoes.
\newblock {\em Nonlinearity}, 28(11):4249--4279, 2015.

\bibitem{Gau:92}
Andrea Gaunersdorfer.
\newblock Time averages for heteroclinic attractors.
\newblock {\em SIAM J. Appl. Math.}, 52(5):1476--1489, 1992.

\bibitem{GelKwi:}
Katrin Gelfert and Dominik Kwietniak.
\newblock On density of ergodic measures and generic points.
\newblock {\em To appear in: Ergodic Theory Dynam. Systems}.

\bibitem{GorPes:17}
Anton Gorodetski and Yakov Pesin.
\newblock Path connectedness and entropy density of the space of hyperbolic
  ergodic measures.
\newblock In {\em Modern theory of dynamical systems}, volume 692 of {\em
  Contemp. Math.}, pages 111--121. Amer. Math. Soc., Providence, RI, 2017.

\bibitem{IlyShi:17}
Yulij~S. Ilyashenko and Ivan Shilin.
\newblock Attractors and skew products.
\newblock In {\em Modern theory of dynamical systems}, volume 692 of {\em
  Contemp. Math.}, pages 155--175. Amer. Math. Soc., Providence, RI, 2017.

\bibitem{Kan:84}
Ittai Kan.
\newblock Open sets of diffeomorphisms having two attractors, each with an
  everywhere dense basin.
\newblock {\em Bull. Amer. Math. Soc. (N.S.)}, 31(1):68--74, 1994.

\bibitem{KatHas:95}
Anatole Katok and Boris Hasselblatt.
\newblock {\em Introduction to the modern theory of dynamical systems},
  volume~54 of {\em Encyclopedia of Mathematics and its Applications}.
\newblock Cambridge University Press, Cambridge, 1995.
\newblock With a supplementary chapter by Katok and Leonardo Mendoza.

\bibitem{LedWal:77}
Fran\c{c}ois Ledrappier and Peter Walters.
\newblock A relativised variational principle for continuous transformations.
\newblock {\em J. London Math. Soc. (2)}, 16(3):568--576, 1977.

\bibitem{LepOliRio:11}
Renaud Leplaideur, Krerley Oliveira, and Isabel Rios.
\newblock Equilibrium states for partially hyperbolic horseshoes.
\newblock {\em Ergodic Theory Dynam. Systems}, 31(1):179--195, 2011.

\bibitem{LinOlsSte:78}
Joram Lindenstrauss, Gunnar~H. Olsen, and Yaki Sternfeld.
\newblock The {P}oulsen simplex.
\newblock {\em Ann. Inst. Fourier (Grenoble)}, 28(1):vi, 91--114, 1978.

\bibitem{New:80}
Sheldon~E. Newhouse.
\newblock Lectures on dynamical systems.
\newblock In {\em Dynamical systems ({B}ressanone, 1978)}, pages 209--312.
  Liguori, Naples, 1980.

\bibitem{RamSiq:17}
Vanessa Ramos and Jaqueline Siqueira.
\newblock On equilibrium states for partially hyperbolic horseshoes: uniqueness
  and statistical properties.
\newblock {\em Bull. Braz. Math. Soc. (N.S.)}, 48(3):347--375, 2017.

\bibitem{RioSiq:18}
Isabel Rios and Jaqueline Siqueira.
\newblock On equilibrium states for partially hyperbolic horseshoes.
\newblock {\em Ergodic Theory Dynam. Systems}, 38(1):301--335, 2018.

\bibitem{Sig:74}
Karl Sigmund.
\newblock On dynamical systems with the specification property.
\newblock {\em Trans. Amer. Math. Soc.}, 190:285--299, 1974.

\bibitem{Sig:77}
Karl Sigmund.
\newblock On the connectedness of ergodic systems.
\newblock {\em Manuscripta Math.}, 22(1):27--32, 1977.

\bibitem{TahYan:}
Ali Tahzibi and Jiagang Yang.
\newblock Strong hyperbolicity of ergodic measures with large entropy.
\newblock {\em Preprint {\tt arXiv:1606.09429}, To appear in: {Trans. Amer.
  Math. Soc.}}

\bibitem{Tak:94}
Floris Takens.
\newblock Heteroclinic attractors: time averages and moduli of topological
  conjugacy.
\newblock {\em Bol. Soc. Brasil. Mat. (N.S.)}, 25(1):107--120, 1994.

\bibitem{Wal:82}
Peter Walters.
\newblock {\em An introduction to ergodic theory}, volume~79 of {\em Graduate
  Texts in Mathematics}.
\newblock Springer-Verlag, New York-Berlin, 1982.

\end{thebibliography}

\end{document}